\renewcommand{\p@enumii}{}
\def\@enum@{\list{\csname label\@enumctr\endcsname}          {\usecounter{\@enumctr}\def\makelabel##1{
			\normalfont\ignorespaces\emph{{##1}~}}
		\setlength{\labelsep}{3pt}
		\setlength{\parsep}{0pt}
		\setlength{\itemsep}{0pt}
		\setlength{\leftmargin}{0pt}
		\setlength{\labelwidth}{0pt}
		\setlength{\listparindent}{\parindent}
		\setlength{\itemsep}{0pt}
		\setlength{\itemindent}{0pt}
		\topsep=3pt plus 1pt minus 1 pt}}
\renewcommand{\epsilon}{\ensuremath{\varepsilon}}
\renewcommand{\phi}{\ensuremath{\varphi}}
\renewcommand{\to}{\ensuremath{\longrightarrow}}
\def\@map#1#2[#3]{\mbox{$#1 \colon\thinspace #2 \to #3$}}
\def\map#1#2{\@ifnextchar [{\@map{#1}{#2}}{\@map{#1}{#2}[#2]}}
\newtheoremstyle{theoremm}{}{}{\itshape}{}{\scshape}{.}{ }{}
\theoremstyle{theoremm}
\newtheorem{theorem}{Theorem}
\newtheorem{lemma}{Lemma}
\newtheorem{proposition}{Proposition}
\newtheorem{corollary}{Corollary}
\newtheorem*{definition}{Definition}
\newtheoremstyle{reptheorem}{}{}{\itshape}{}{\scshape}{}{ }{\thmname{#1}\mathrm{#3}}
\theoremstyle{reptheorem}
\newtheoremstyle{remark}{}{}{}{}{\scshape}{.}{ }{}
\theoremstyle{remark}
\newtheorem{remark}{Remark}
\newtheorem{example}{Example}
\newtheoremstyle{comment}{}{}{\bfseries}{}{\bfseries}{:}{ }{}
\theoremstyle{comment}
\newcommand{\rethA}[1]{Theorem~A}
\begin{document}

\title{Skew-symmetric augmented matrices and a characterization of virtual doodles}
	\author{OSCAR~OCAMPO~\\
		Universidade Federal da Bahia,\\
		Departamento de Matem\'atica - IME,\\
		Av. Milton Santos~S/N~CEP:~40170-110 - Salvador - BA - Brazil.\\
		e-mail:~\texttt{oscaro@ufba.br}\vspace*{4mm}\\ JOS\'E~GREGORIO~RODR\'IGUEZ-NIETO\\Departamento de Matem\'aticas - Facultad de Ciencias,\\Universidad Nacional de Colombia,\\Carrera 65 N. 59A-110, Medell\'in-Colombia.\\e-mail:~\texttt{jgrodrig@unal.edu.co} \vspace*{4mm}\\ OLGA~PATRICIA~SALAZAR-D\'IAZ\\Departamento de Matem\'aticas - Facultad de Ciencias,\\Universidad Nacional de Colombia,\\Carrera 65 N. 59A-110, Medell\'in-Colombia.\\e-mail:~\texttt{opsalazard@unal.edu.co} 
        }
  
	\maketitle
\begin{abstract}
   In this paper, we present a brief overview of the concept of doodles from the perspective of J.~S.~Carter’s work on classifying immersed curves 
   and the work of J. Carter, S. Kamada, and M.~Saito on stable equivalence of knots on surfaces and virtual knot cobordisms.
   We use the homology intersection number and the work of G. Cairns and D.~Elton on the Gauss word problem  
   to introduce the concept of skew-symmetric augmented matrices for determining whether a virtual doodle is non-classical. We also provide a  characterization of the virtualization of classical doodles.
\end{abstract}

\section{Introduction}

Doodles were first introduced by Fenn and Taylor \cite{FT} as embeddings of a collection of circles in $S^{2}$  with the condition that there are no triple or higher intersection points. These embeddings are considered up to an equivalence relation that allows the introduction or removal of monogons and bigons. Later, Khovanov \cite{K} expanded this concept by allowing self intersections of curves on surfaces. In addition, Khovanov introduced an algebraic structure he referred to as twin groups, which we reinterpret here as planar braids. Doodles can also be understood as planar projections of knots and links in the plane, where the third Reidemeister move is disallowed, and the first and second Reidemeister moves are flattened. This creates an analogous relationship to the well-known correspondence: "(virtual) knots and links" relate to "(virtual) braid groups" \cite{Ka}. The new correspondence in this context is: "(virtual) doodles" correspond to "(virtual) planar braid groups" \cite{G0, NS}.

Due to the similarity between doodles on surfaces and the concept of normal curves, we present a topological approximation of doodle diagrams on oriented and compact surfaces, similar to those provided by Carter in \cite{Ca} and by Carter, Kamada, and Saito in \cite{CaKaSa}. This approach allows us to apply the work of Cairns and Elton, as found in \cite{CaEl} and \cite{CaEl2}, on the Gauss word problem to define the skew-symmetric augmented matrix associated to a doodle diagram, which is a doodle invariant. The main property of this invariant is that it is null for the set of doodle diagrams on the sphere. Therefore, skew-symmetric augmented matrices can determine whether a doodle is not equivalent to a doodle diagram on the sphere $S^{2}$. These latter doodles are commonly referred to as \textit{classical doodles}. Additionally, we introduce the concept of virtualization of doodles on the sphere and study the behavior of skew-symmetric augmented matrices with respect to this operation. Thus, we provide sufficient conditions to determine, in many cases, when the virtualization of a crossing point in a classical doodle is non-classical.

This paper is organized as follows: In Section~\ref{sec:doodles}, we provide a brief overview of the concept of doodles in terms of stable R-equivalence, which is a slight modification of the stable equivalence introduced in \cite{Ca}, \cite{CaKaSa}, \cite{CaEl}, \cite{CaEl2}, and \cite{Ro2}. We prove that this definition is equivalent to the one given in \cite{BaFeKaKa}. Moreover, we define minimal doodles and compute the minimum genus of a doodle diagram, such genus is given in terms of the number of components of any regular neighborhood of the doodle diagram. We also use the homology intersection number of a collection of curves, defined in \cite{CaEl}, on the ambient surface of a doodle diagram to characterize certain types of doodle diagrams. In Section~\ref{sec:skew}, we use intersection homology to introduce and study the definition of skew-symmetric augmented matrices, providing some important properties. We prove that Kishino's doodle is non-classical. In Section~\ref{sec:virtual}, we introduce the concept of virtualization of classical doodles and examine the behavior of augmented matrices with respect to this operation. In particular, we present a brief characterization of the virtualization of classical doodles.      

\subsection*{Acknowledgments}

The first named author would like to thank Eliane Santos, all HCA staff, Bruno Noronha, Luciano Macedo, Marcio Isabela, Andreia de Oliveira Rocha, Andreia Gracielle Santana, Ednice de Souza Santos, and Vinicius Aiala for their valuable help since July  2024. O.O. was partially supported by National Council for Scientific and Technological Development - CNPq through a \textit{Bolsa de Produtividade}, project number 305422/2022~-~7.

\section{Doodles}\label{sec:doodles}

In this paper the word \textit{surface} means an oriented and compact surface. An \textit{oriented doodle diagram} of $n$-components or, for simplicity, \textit{doodle diagram}, is a tuple $(\Sigma,D)$, where $\Sigma$ is a surface and $D\subset \Sigma$ is the image of an orientation-preserving continuous map $\gamma\colon  \coprod_{i=1}^{n}S_{i}^{1}\rightarrow \Sigma $ from $n$ disjoint circles $S^{1}$ to $\Sigma$, such that the number of self intersections, called \textit{crossing points of $D$}, is finite, no more than three points have the same image and the intersections are transverse. The surface $\Sigma$ is called the \textit{ambient surface} of $D$. In the case that $\Sigma=S^{2}$ is the $2$-sphere, we say that $(S^{2},D)$ is a \textit{classical doodle diagram}. The restriction of $\gamma$ to each circle $S_{i}^{1}$ is called the $i$th-component of $D$.
\begin{definition}\label{geotopic}
We say that two doodle diagrams $(\Sigma_{1},D_{1})$ and $(\Sigma_{2},D_{2})$ are \textit{geotopic}, denoted by $(\Sigma_{1},D_{1}) \stackrel{e}{\sim}  (\Sigma_{2},D_{2})$ if there exist a surface $\Sigma_{3}$ and orientation-preserving embeddings $f_{i}\colon  \Sigma_{i}\rightarrow \Sigma_{3}$, $i=1,2$, such that $f_1(D_1)=f_2(D_2)$. 
\end{definition}
The geotopy relation is reflexive and symmetric, but not transitive. For example, if $\Sigma_{1}=S^{1}\times [0,1]$ (a cylinder), $\Sigma_{2}=\{(x,y,z)\in \mathbf{R}^{3}\mid x^{2}+y^{2}+z^{2}+1 \}$ (a sphere) and  $\Sigma_3=S^{1}\times S^{1}$ the torus surface, then 
$$(\Sigma_2,D_{2}:=\{(x,y,z)\in \Sigma_2\mid z=0\})\stackrel{e}{\sim} (\Sigma_1,D_{1}:=S^{1}\times \{1/2\})\stackrel{e}{\sim} (\Sigma_3,D_{3}:=S^{1}\times \{(1,0)\}),$$
but $(\Sigma_2,D_{2})$ is not geotopic to $(\Sigma_3,D_{3})$.
\begin{definition}\label{geotopic2}
  Two doodle diagrams $(\Sigma,D)$ and $(\Sigma',D')$ are set to be \textit{stably equivalent}, denoted $(\Sigma,D)\sim(\Sigma',D')$ if there exists a finite collection of doodle diagrams $\{(\Sigma_i,D_i)\}_{i=1}^{n}$, such that 
\[
(\Sigma,D)\stackrel{e}{\sim} (\Sigma_1,D_1)\stackrel{e}{\sim} \cdots \stackrel{e}{\sim}(\Sigma_n,D_n)\stackrel{e}{\sim} (\Sigma',D').
\]
\end{definition}
The stable equivalency relation is, in fact, an equivalence relation on the set of doodles diagrams.   

\begin{definition}[Surgery on ambient surfaces]\label{surgery}
Let $(\Sigma, D)$ be a doodle diagram, and let ${\gamma_{j}}{j=1}^{k}$ be a disjoint collection of $k$ simple closed curves in $\Sigma$ that neither intersect the boundary of $\Sigma$ nor the doodle diagram $D$. Consider regular neighborhoods $V_{D}^{\Sigma}$ and $\{V_{\gamma_i}^{\Sigma}\}{i=1}^{k}$ of $D$ and the curves $\gamma_1, \dots, \gamma_{k}$, respectively, with disjoint closures. Now, remove the neighborhoods $V_{\gamma_1}^{\Sigma}, \dots, V_{\gamma_k}^{\Sigma}$ from $\Sigma$ and glue $2k$ disks via their boundaries to the new boundaries of the surface. The resultant surface, denoted by $h_{\gamma_1, \dots, \gamma_{k}}^{-}(\Sigma)$, is called the \textit{surgery} of $\Sigma$ along the curves $\{\gamma_{j}\}_{j=1}^{k}$, and the corresponding doodle diagram $(h_{\gamma_1, \dots, \gamma_{k}}^{-}(\Sigma), D)$ is called a surgery of $(\Sigma, D)$ along the curves $\{\gamma_{j}\}_{j=1}^{k}$.
\end{definition}
Observe that, when we remove the regular neighborhood $\{V_{\gamma_i}^{\Sigma}\}_{i=1}^{k}$ we obtain a closed and orientable new surface, $\Sigma_{\gamma_{1},\cdots,\gamma_k}$, which has $2k$ new components of boundary different from the boundary of the original surface $\Sigma$. If we denote these components by $\{\gamma_{i}^{+}, \gamma_{i}^{-}\}_{i=1}^{k}$, then, the surface $h_ {\gamma_1,\cdots, \gamma_{k}}^{-}(\Sigma)$ is obtained by gluing closed disks $\mathbf{d}_{i}^{+}, \mathbf{d}_{i}^{-}$ to each component $\gamma_{i}^{+}, \gamma_{i}^{-}$, $i=1,\cdots, k$ of $\Sigma_{\gamma_{1},\cdots,\gamma_k}$. Thereby, the curves $\gamma_{i}^{+}, \gamma_{i}^{-}$ become, respectively, the border of simply connected closed regions $\Gamma_{i}^{+}, \Gamma_{i}^{-}$ in  $h_ {\gamma_1,\cdots, \gamma_{k}}^{-}(\Sigma)$, $i=1,\cdots,k$. Since, $\gamma_{1},\cdots, \gamma_{k}$ are simply closed curves in $\Sigma$, the closure of the regular neighborhoods $V_{\gamma_1}^{\Sigma},\cdots, V_{\gamma_k}^{\Sigma}$ can be seen as thin tubes of the form $\gamma_{i}\times [0,1]$, where $\gamma_{i}^{+}:=\gamma_{i}\times \{0\}$ and $\gamma_{i}^{-}=\gamma_{i}\times \{1\}$, or, as it is well known in the literature, $1$-handles. Thus, the surgery of $\Sigma$ along the curves $\{\gamma_{j}\}_{j=1}^{k}$ is a sequence of the elimination of the $1$-handles $\gamma_{i}\times [0,1]$, $i=1,\cdots,k$. So, surgery is, in fact, a sequence of \textit{$1$-handle elimination}. Hence, we have the following recursive form of a surgery  
\[
h_ {\gamma_1,\cdots, \gamma_{k}}^{-}(\Sigma)=h_{\gamma_{1}}^{-}(h_{\gamma_{2}}^{-}( \cdots (h_{\gamma_{k-1}}^{-}( h_{\gamma_{k}}^{-})(\Sigma))\cdots)).
\]
The reciprocal is also true. For example, suppose that we want to eliminate an $1$-handle in $\Sigma$. Since, any $1$-handle in $\Sigma$ is a section of the form $\gamma\times [0,1]$, where $\gamma$ is a simple closed curve in $\Sigma$; then, the elimination of the $1$-handle can be seen as the surgery along the curve $\gamma \times \{\frac{1}{2}\}$.

\begin{definition}[$1$-handle addition]\label{1hadleaddition}
Let $(\Sigma,D)$ be a surface doodle diagram and let $\gamma^{+}$ and $\gamma^{-}$ be two disjoint  null-homologue curves in $\Sigma$ bordering two compact and disjoint simply connected regions $\Gamma^{+}$ and $\Gamma^{-}$, respectively, such that $D\cap (\Gamma^{+}\cup \Gamma^{-})=\emptyset$ and $\partial(\Sigma)\cap (\Gamma^{+}\cup \Gamma^{-})=\emptyset$. Now, remove the interior of $\Gamma^{+}$ and $\Gamma^{-}$ and attach an $1$-handle, $S^{1}\times [0,1]$,  by its boundary to the new boundary components $\gamma^{+}$  and $\gamma^{-}$ of $\Sigma$. The resultant surface, denoted by $h^{+}_{\gamma^{+},\gamma^{-}}(\Sigma)$, is called the $1$-handle addition to $\Sigma$ along the curves $\gamma^{+}$ and $\gamma^{-}$.
\end{definition}

In general, if we have a collection of $2k$ null-homologue curves $\gamma_1^{+},  \gamma_{1}^{-}, \cdots, \gamma_k^{+},  \gamma_{k}^{-}$, bordering $2k$ disjoint compact and simply connected regions $\Gamma_{1}^{+}, \Gamma_{1}^{-}, \cdots,\Gamma_{k}^{+}, \Gamma_{k}^{-} $, respectively, in $\Sigma$, such that these regions do not intersect neither  the doodle diagram $D$ nor the boundary of $\Sigma$. Then $h^{+}_{\gamma_1^{+},\gamma_1^{-},\cdots,\gamma_k^{+},\gamma_k^{-}}(\Sigma)$ is the surface obtained by gluing $k$ $1$-handles $\{(S^{1}\times [0,1])_{i}\}_{i=1}^{k}$ to $\Sigma$ recursively as follows: 
\[
h^{+}_{\gamma_1^{+},\gamma_1^{-},\cdots,\gamma_k^{+},\gamma_k^{-}}(\Sigma)=h^{+}_{\gamma_1^{+},\gamma_1^{-}}(h^{+}_{\gamma_2^{+},\gamma_2^{-}}( \cdots h^{+}_{\gamma_{k-1}^{+},\gamma_{k-1}^{-}}( h^{+}_{\gamma_k^{+},\gamma_k^{-}}(\Sigma))\cdots ))
\]
 and $(h^{+}_{\gamma_1^{+},\gamma_1^{-},\cdots,\gamma_k^{+},\gamma_k^{-}}(\Sigma),D)$ is called a \textit{$1$-handles addition} to $(\Sigma,D)$ along the paired curves $\{\gamma_i^{+},  \gamma_{i}^{-}\}_{i=1}^{k}$. 

 \begin{remark}
 With the notation of Definition \ref{1hadleaddition}. If $\gamma:=S^{1}\times \{\frac{1}{2}\}$ is a specific meridian of the attached $1$-handle $S^{1}\times [0,1]$ to $\Sigma$ along the curves $\gamma^{+},\gamma^{-}$, then $h_{\gamma}^{-}( h^{+}_{\gamma^{+},\gamma^{-}}(\Sigma))$ is homeomorphic to $\Sigma$. Reciprocally, if $\gamma$ is a simple and closed curve in $\Sigma$ and $\gamma\times [0,1]$ is the removed $1$-handle used to construct $h^{-}_{\gamma}(\Sigma)$, then $h^{+}_{\gamma^{+},\gamma^{-}}( h^{-}_{\gamma}(\Sigma))$ is homeomorphic to $\Sigma$, where $\gamma^{+}=\gamma\times \{0\}$ and $\gamma^{-}=\gamma\times \{1\}$. In order to simplify the notation, we use 
 \begin{center}
 $h^{-}(\Sigma)=h_ {\gamma_1,\cdots, \gamma_{k}}^{-}(\Sigma)$ and  $h^{+}(\Sigma)=h^{+}_{\gamma_1^{+},\gamma_1^{-},\cdots,\gamma_k^{+},\gamma_k^{-}}(\Sigma)$. 
 \end{center}
 Thereby, $h^{+}(\Sigma)$ and $h^{-}(\Sigma)$ can be seem as inverse surgeries one of the other.
 \end{remark}
Next theorem relates the concepts of valid surgery and stable R-equivalence.

\begin{theorem}\label{handletheorem}
Let $(\Sigma,D)$ be a surface doodle digram. Then, for any surgeries $h^{-}$ and $h^{+}$ of $(\Sigma,D)$, 
\[(h^{-}(\Sigma),D) \sim (\Sigma,D)\sim (h^{+}(\Sigma),D).\]
\end{theorem}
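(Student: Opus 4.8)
The plan is to reduce everything to a single handle operation and then, in each case, to exhibit a subsurface common to the two ambient surfaces that already contains the whole diagram $D$. Since both $h^{-}$ and $h^{+}$ are by definition finite compositions of single $1$-handle eliminations and additions, and since $\sim$ is an equivalence relation (Definition \ref{geotopic2}), in particular transitive, it suffices to prove the two relations when exactly one curve (resp.\ one pair of curves) is involved; the general statement then follows by inserting the intermediate surfaces produced at each step into a chain of stable equivalences.

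The conceptual core is an elementary remark about geotopy: if $\Sigma_{0}$ is an oriented compact subsurface of a surface $\Sigma$ with $D\subset\Sigma_{0}$, then $(\Sigma_{0},D)\stackrel{e}{\sim}(\Sigma,D)$. Indeed, taking $\Sigma_{3}=\Sigma$, the inclusion $\Sigma_{0}\hookrightarrow\Sigma$ (orientation-preserving, since $\Sigma_{0}$ carries the restricted orientation) and the identity of $\Sigma$ are embeddings whose images of $D$ coincide, which is exactly Definition \ref{geotopic}. Thus, whenever two ambient surfaces contain a common subsurface carrying all of $D$, they are linked by two geotopies through that subsurface and are therefore stably equivalent.

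I would then apply this to each surgery. For a single $1$-handle addition $h^{+}_{\gamma^{+},\gamma^{-}}$, set $\Sigma_{0}=\Sigma\setminus\operatorname{int}(\Gamma^{+}\cup\Gamma^{-})$. By the hypotheses of Definition \ref{1hadleaddition} the regions $\Gamma^{\pm}$ meet neither $D$ nor $\partial\Sigma$, so $D\subset\Sigma_{0}$, and $\Sigma_{0}$ is a subsurface of both $\Sigma$ (namely $\Sigma_{0}$ with the two disks glued back) and $h^{+}_{\gamma^{+},\gamma^{-}}(\Sigma)$ (namely $\Sigma_{0}$ with the handle $S^{1}\times[0,1]$ glued along $\gamma^{+},\gamma^{-}$). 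Hence $(\Sigma,D)\stackrel{e}{\sim}(\Sigma_{0},D)\stackrel{e}{\sim}(h^{+}_{\gamma^{+},\gamma^{-}}(\Sigma),D)$, giving $(\Sigma,D)\sim(h^{+}_{\gamma^{+},\gamma^{-}}(\Sigma),D)$. For a single surgery $h^{-}_{\gamma}$ the argument is symmetric with $\Sigma_{0}=\Sigma\setminus\operatorname{int}(V^{\Sigma}_{\gamma})$: since $\gamma$, and hence its regular neighborhood $V^{\Sigma}_{\gamma}\cong\gamma\times[0,1]$, is disjoint from $D$, we again have $D\subset\Sigma_{0}$, and $\Sigma_{0}$ sits as a subsurface of both $\Sigma$ and $h^{-}_{\gamma}(\Sigma)$. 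Alternatively, one may avoid treating $h^{-}$ separately by invoking the observation in the Remark that $h^{+}$ and $h^{-}$ are mutually inverse: applying the $h^{+}$ case to the surface $h^{-}_{\gamma}(\Sigma)$ recovers $(\Sigma,D)$ up to homeomorphism, whence $(h^{-}_{\gamma}(\Sigma),D)\sim(\Sigma,D)$.

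The steps requiring genuine care, rather than formal chaining, are checking that each intermediate pair is a legitimate doodle diagram (the subsurface $\Sigma_{0}$ is oriented and compact, possibly with extra boundary circles $\gamma^{\pm}$, and still carries $D$ with only finitely many transverse double points) and that the inclusions are orientation-preserving embeddings in the precise sense of Definition \ref{geotopic}. The main obstacle I anticipate is the bookkeeping in the multi-handle case: one must confirm that performing the surgeries one at a time keeps the remaining curves and bounded regions disjoint from the diagram and from one another at every stage, so that the single-handle argument applies verbatim to each surface in the recursion. This is guaranteed by the disjointness built into Definitions \ref{surgery} and \ref{1hadleaddition}, and transitivity of $\sim$ then assembles the individual geotopies into the stated equivalences.
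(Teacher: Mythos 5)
Your proposal is correct and follows essentially the same route as the paper: the paper's proof likewise exhibits $\Sigma\setminus Int(V_{\gamma}^{\Sigma})$ as a common subsurface of $\Sigma$ and $h^{-}_{\gamma}(\Sigma)$, uses the inclusion together with the identity map to obtain the two geotopies $(\Sigma,D)\stackrel{e}{\sim}(\Sigma\setminus Int(V_{\gamma}^{\Sigma}),D)\stackrel{e}{\sim}(h^{-}_{\gamma}(\Sigma),D)$, and then reduces the multi-handle case to single handles via the recursive decomposition of surgeries, exactly as you do. The only difference is cosmetic: you make explicit the ``common subsurface'' lemma and carry out the $h^{+}$ case (via $\Sigma\setminus \left(Int(\Gamma^{+})\cup Int(\Gamma^{-})\right)$, or by invoking that $h^{+}$ and $h^{-}$ are mutually inverse), which the paper handles with ``in a similar way.''
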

\begin{proof}
We first prove the case in which $h^{-}=h_{\gamma}^{-}$ the surgery is along the curve $\gamma$, the rest of the proof comes from the fact that $h_ {\gamma_1,\cdots, \gamma_{k}}^{-}(\Sigma)=(h_{\gamma_{1}}^{-}\circ \cdots \circ h_{\gamma_{k}}^{-})(\Sigma)$. Since the inclusion function $i\colon  \Sigma \setminus Int(V_{\gamma}^{\Sigma}) \rightarrow \Sigma$ and the identity function $id\colon  \Sigma \rightarrow \Sigma$ are orientation-preserving embeddings, and $i(D)=Id(D)$, then $(\Sigma\setminus Int(V_{\gamma}^{\Sigma}),D) \stackrel{R}{\sim} (\Sigma,D)$. In a similar way, we also have that the inclusion function $i\colon  \Sigma \setminus Int(V_{\gamma}^{\Sigma}) \rightarrow h^{-}_{\gamma}(\Sigma)$ and the identity function $id\colon  h^{-}_{\gamma}(\Sigma) \rightarrow h^{-}_{\gamma}(\Sigma)$ are orientation-preserving  embeddings, and that $i(D)=Id(D)$, then $(\Sigma\setminus V_{\gamma}^{\Sigma},D) \stackrel{R}{\sim} (h^{-}_{\gamma}(\Sigma),D)$. Thus,
\[
(\Sigma,D)\stackrel{R}{\sim} (\Sigma\setminus V_{\gamma}^{\Sigma} ,D)\stackrel{R}{\sim} (h^{-}_{\gamma}(\Sigma),D).
\]
Therefore, $(h^{-}(\Sigma),D) \sim (\Sigma,D)$. In a similar way, we get the proof of $(h^{+}(\Sigma),D) \sim (\Sigma,D)$.
\end{proof}

The proof of the following lemma comes in the same fashion that the one of the previous theorem.
\begin{lemma}
   Let $(\Sigma,D)$ be a doodle diagram. Then  $(\Sigma,D)$ is stable equivalent to $(S_{g}(\Sigma),D)$, where $S_g(\Sigma)$ is the closed surface of genus $g$, obtained by gluing disks  to each component of the boundary $\partial(\Sigma)$ of $\Sigma$. 
\end{lemma}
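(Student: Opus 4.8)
The plan is to follow exactly the template of Theorem~\ref{handletheorem}: to certify a stable equivalence it suffices to produce a single geotopy, that is, a common ambient surface together with two orientation-preserving embeddings that carry the two copies of $D$ onto the same subset. Since $\stackrel{e}{\sim}$ is already a one-step instance of the stable equivalence $\sim$ of Definition~\ref{geotopic2}, establishing $(\Sigma,D)\stackrel{e}{\sim}(S_g(\Sigma),D)$ will immediately give the claim; no genuine chaining is needed.

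Concretely, I would take the common ambient surface to be $S_g(\Sigma)$ itself. The capping construction comes equipped with a natural inclusion $\iota\colon \Sigma \hookrightarrow S_g(\Sigma)$, which is an orientation-preserving embedding because $S_g(\Sigma)$ is built from $\Sigma$ by gluing disks along the boundary circles and inherits the orientation of $\Sigma$. Pairing $\iota$ with the identity $\id\colon S_g(\Sigma)\to S_g(\Sigma)$, both are orientation-preserving embeddings, and because $D\subseteq\Sigma$ sits inside $S_g(\Sigma)$ as a subset of the subsurface $\Sigma$ (the disks being attached only along $\partial\Sigma$), we have $\iota(D)=D=\id(D)$ as subsets of $S_g(\Sigma)$. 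This is precisely the data required by Definition~\ref{geotopic}, so $(\Sigma,D)\stackrel{e}{\sim}(S_g(\Sigma),D)$, and therefore $(\Sigma,D)\sim(S_g(\Sigma),D)$. This is the direct analogue of the step in Theorem~\ref{handletheorem} where $\Sigma\setminus Int(V_{\gamma}^{\Sigma})$ was seen to embed into both $\Sigma$ and $h^{-}_{\gamma}(\Sigma)$ while fixing $D$.

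If one prefers to mirror the recursive bookkeeping of the previous theorem even more closely, I would cap the boundary components one at a time: setting $\Sigma^{(0)}=\Sigma$ and letting $\Sigma^{(j)}$ be obtained from $\Sigma^{(j-1)}$ by filling a single boundary circle with a disk, each step is a geotopy by the inclusion argument above, and chaining the finitely many steps through Definition~\ref{geotopic2} yields the stable equivalence with the final closed surface $S_g(\Sigma)$.

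I expect no real obstacle here; the content of the lemma is essentially the observation that filling in boundary disks is an orientation-preserving enlargement of the ambient surface that leaves $D$ untouched. The only points requiring care are routine: confirming that $\iota$ preserves orientation and that the self-intersections and the whole of $D$ lie away from the attached disks, so that the two images genuinely coincide. The genus $g$ itself carries no content in the argument, being merely the genus of the resulting closed surface $S_g(\Sigma)$.
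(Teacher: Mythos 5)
Your proof is correct and matches the paper's intent: the paper explicitly says the lemma's proof ``comes in the same fashion'' as Theorem~\ref{handletheorem}, i.e., exactly the inclusion-plus-identity embedding argument you give (the paper additionally records, in equation~(\ref{addhandle}), that capping boundary circles can be viewed as iterated $h^{+}$ operations, paralleling your one-disk-at-a-time variant). Your observation that a single geotopy $\iota\colon \Sigma \hookrightarrow S_g(\Sigma)$, $\id\colon S_g(\Sigma)\to S_g(\Sigma)$ with $\iota(D)=D=\id(D)$ already suffices is exactly the right simplification.
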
\label{glue}
Let us suppose that $\gamma_1, \dots, \gamma_k$ are the components of the boundary of $\Sigma$, and let $\mathbf{d}_1, \dots, \mathbf{d}_k$ be the respective disks that we will glue to each component of $\partial(\Sigma)$ to obtain $S{g}(\Sigma)$. Then,
\begin{equation}\label{addhandle}
S_{g}(\Sigma) = h^{+}_{\gamma_1, \partial(\mathbf{d}_1)} \left( h^{+}_{\gamma_2, \partial(\mathbf{d}_2)} \left( \cdots \left( h^{+}_{\gamma_{k-1}, \partial(\mathbf{d}_{k-1})} \left( h^{+}_{\gamma_k, \partial(\mathbf{d}_k)} (\Sigma) \right) \right) \cdots \right) \right).
\end{equation}

\begin{theorem} \label{defequiv}
Let $(\Sigma,D)$ and $(\Sigma',D')$ be two doodles diagram. Then, $(\Sigma,D)$ and $(\Sigma',D')$ are stably equivalent if and only if one of $(S_{g}(\Sigma),D)$ and $(S_{d}(\Sigma'),D')$ can be changed  into the other by a finite sequence of the following operations
\begin{enumerate}
\item surgeries of type $h^{+}$ and $h^{-}$, 
\item elimination of connected components of the ambient surface disjoint of the doodle diagram and 
\item orientation-preserving homeomorphism of the ambient surface.
\end{enumerate}
\end{theorem}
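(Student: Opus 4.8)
The plan is to prove the two implications separately, the engine in both directions being that all three operations only ever modify the ambient surface away from the doodle, so they can be controlled on a common subsurface carrying $D$. For the implication $(\Leftarrow)$ I would first check that each listed operation preserves stable equivalence. For the surgeries of type $h^{+}$ and $h^{-}$ this is exactly Theorem~\ref{handletheorem}. An orientation-preserving homeomorphism $\varphi\colon \Sigma\to\Sigma'$ with $\varphi(D)=D'$ is a geotopy in the sense of Definition~\ref{geotopic} (take the ambient surface to be $\Sigma'$, with $f_{1}=\varphi$ and $f_{2}=\operatorname{Id}$), and the elimination of a connected component disjoint from the doodle is again a geotopy (embed the smaller surface by inclusion and the larger one by the identity into the larger one). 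Hence if one of $(S_{g}(\Sigma),D)$ and $(S_{d}(\Sigma'),D')$ is carried to the other by operations (a)--(c), the two are stably equivalent; combining this with the lemma preceding this theorem, which gives $(\Sigma,D)\sim(S_{g}(\Sigma),D)$ and $(\Sigma',D')\sim(S_{d}(\Sigma'),D')$, yields $(\Sigma,D)\sim(\Sigma',D')$.

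For the implication $(\Rightarrow)$ I would first record that operations (a)--(c) generate an equivalence relation, which I denote $\approx$: indeed $h^{+}$ and $h^{-}$ are mutually inverse, homeomorphisms are invertible, and although only the elimination (b) of a disjoint component is listed, its inverse is also realizable by operations (a), since a small trivial circle $\gamma$ bounding a disk disjoint from $D$ produces, under $h_{\gamma}^{-}$, a split-off sphere, onto which further $h^{+}$-surgeries build arbitrary genus. Writing $S(\Sigma)$ for the closed surface $S_{g}(\Sigma)$ obtained by capping $\partial\Sigma$ with disks, and recalling that stable equivalence is the transitive closure of geotopy, it then suffices to prove that a single geotopy $(\Sigma_{a},D_{a})\stackrel{e}{\sim}(\Sigma_{b},D_{b})$ forces $(S(\Sigma_{a}),D_{a})\approx(S(\Sigma_{b}),D_{b})$, and afterwards to compose these relations along the defining chain, whose endpoints are $(S_{g}(\Sigma),D)$ and $(S_{d}(\Sigma'),D')$.

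For the single-geotopy step, let $\Sigma_{3}$ be the witnessing surface with orientation-preserving embeddings $f_{a},f_{b}$ and common image $D_{3}=f_{a}(D_{a})=f_{b}(D_{b})$, and set $\widehat{\Sigma_{3}}=S(\Sigma_{3})$. The images $N_{a}=f_{a}(\Sigma_{a})$ and $N_{b}=f_{b}(\Sigma_{b})$ are compact subsurfaces of $\widehat{\Sigma_{3}}$, each containing $D_{3}$ in its interior (after passing to a regular neighborhood in general position) and each with complement disjoint from $D_{3}$. The core of the argument is the following principle: \emph{if a closed surface $X$ contains a compact subsurface $N$ with $D\subset\operatorname{Int}(N)$ and with $X\setminus\operatorname{Int}(N)$ disjoint from $D$, then $(X,D)$ can be reduced to $(S(N),D)$ using only $h^{-}$-surgeries, eliminations, and homeomorphisms.} Granting this, $(\widehat{\Sigma_{3}},D_{3})$ reduces both to $(S(N_{a}),D_{3})$ and to $(S(N_{b}),D_{3})$; since $f_{a}$ and $f_{b}$ are homeomorphisms onto $N_{a}$ and $N_{b}$ they extend to homeomorphisms $S(N_{a})\cong S(\Sigma_{a})$ and $S(N_{b})\cong S(\Sigma_{b})$, so by operation (c) both $(S(\Sigma_{a}),D_{a})$ and $(S(\Sigma_{b}),D_{b})$ lie in the single $\approx$-class of $(\widehat{\Sigma_{3}},D_{3})$, which is what is required.

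The principle is the step I expect to be the main obstacle, and I would prove it by a surface-theoretic reduction of the complementary region $E=\overline{X\setminus N}$, a compact surface with $\partial E=\partial N$ that is disjoint from $D$. First, as long as some component of $E$ carries positive genus, I compress it by an $h^{-}$-surgery along a non-separating simple closed curve lying in $E$ (hence automatically disjoint from $D$), which lowers the total genus; after finitely many steps every component of $E$ is planar. A closed spherical component is then discarded by operation (b), while a planar component meeting $r\ge 2$ of the circles of $\partial N$ is simplified by an $h^{-}$-surgery along a curve parallel to one boundary circle $c_{i}$, which caps $c_{i}$ by a disk and lowers $r$ by one. Iterating, every circle of $\partial N$ is eventually capped by a disk and the leftover spheres are eliminated, leaving precisely $S(N)$ with $D$ untouched throughout. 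The delicate points here are to keep all surgery curves inside $E$, so that $N$ and $D$ are never disturbed, and to verify that these splitting surgeries terminate in the canonical capping $S(N)$, independently of the initial complement; once this is established, the coincidence of the two reductions of $\widehat{\Sigma_{3}}$ is exactly what couples $(S(\Sigma_{a}),D_{a})$ with $(S(\Sigma_{b}),D_{b})$ and completes the forward implication.
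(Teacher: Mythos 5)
Your argument is correct, and your backward implication is essentially identical to the paper's: Theorem~\ref{handletheorem} for the surgeries, homeomorphisms and elimination of disjoint components realized as geotopies, and the capping lemma to pass from $(\Sigma,D)$ to $(S_{g}(\Sigma),D)$. The forward implication, however, follows a genuinely different route. The paper funnels both diagrams through regular neighborhoods of the doodle: its equation~(\ref{elihand}), proved by surgering along the boundary circles of a regular neighborhood $O_{D}^{\Sigma}$ and discarding the capped-off leftovers, gives $(S_{g'}(\Sigma),D)\approx (S_{g}(O_{D}^{\Sigma}),D)$, and the two sides of a geotopy are then coupled through the fact that an orientation-preserving embedding carries a regular neighborhood of $D_{i}$ to a regular neighborhood of $f_{i}(D_{i})$, both capped copies being identified via operation (c) with the capped regular neighborhood of the common image $f_{1}(D_{1})=f_{2}(D_{2})$ in $\Sigma_{3}$ (the paper's steps (2)--(4), with the tacit identification $h_{i}=g_{i}$). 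You instead reduce the capped common ambient surface $S(\Sigma_{3})$ directly to each capped image $S(N_{a})$ and $S(N_{b})$ by compressing the complement; your ``principle'' is equation~(\ref{elihand}) generalized from regular neighborhoods to arbitrary compact subsurfaces containing $D$, and it lets you bypass regular-neighborhood theory (in particular the invariance of regular neighborhoods under embeddings, which the paper uses silently) at the price of an explicit surface-theoretic reduction. One remark on that reduction: it is correct as written, but more laborious than necessary, because operation (b) permits the elimination of closed components of \emph{any} genus disjoint from the doodle; hence your genus-compression preamble and the induction on the number $r$ of boundary circles can be replaced by a single move --- surger simultaneously along curves in $\operatorname{Int}(E)$ parallel to the components of $\partial N$, which turns $X$ into $N$ with every boundary circle capped by a disk, together with capped-off closed pieces disjoint from $D$ that (b) then discards --- which is exactly the mechanism of the paper's proof of equation~(\ref{elihand}). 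The ``delicate point'' you flag about independence of the initial complement also evaporates in this one-step version, since any two cappings of $\partial N$ by disks are homeomorphic via a homeomorphism fixing $N$ (hence $D$) pointwise.
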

\begin{proof} Let us denote by $\approx$ the equivalence relation generated by the transformations described in  literals $(a)$-$(b)$, and  let $(\Sigma,D)$ be a doodle diagram and a regular neighborhood $O_{D}^{\Sigma}$ of $D$ in the interior of $\Sigma$. Let $\gamma_{1},\cdots, \gamma_{k}$ be the component of $\partial(O_{D}^{\Sigma})$. Then, these curves are simply-closed curves in $\Sigma$ disjoint from the doodle diagram $D$. From the fact that the regular neighborhoods $V_{\gamma_1}^{\Sigma},\cdots V_{\gamma_{k}}^{\Sigma}$ of $\gamma_{1},\cdots, \gamma_{k}$, given in Definition \ref{surgery}, do not meet the doodle diagram $D$ and overlap  $O_{D}^{\Sigma}$; then, after removing these regular neighborhoods, the resultant surface $\Sigma_{\gamma_1,\cdots,\gamma_k}$ has a component $V_{D}^{\Sigma}$ that is a regular neighborhood of $D$ contained in $O_{D}^{\Sigma}$. Therefore, when we glue the respective discs along the new boundary components $\gamma_{1}^{+},\gamma_1^{-},\cdots, \gamma_{k}^{+},\gamma_{k}^{-}$ of $\Sigma_{\gamma_1,\cdots,\gamma_k}$, we have that $h^{-}_{\gamma_1,\cdots,\gamma_k}(\Sigma)=S_{g'}(V_{D}^{\Sigma}) \coprod S_{g_1}(V_{\gamma_1}^{\Sigma}) \coprod \cdots \coprod S_{g_k}(V_{\gamma_k}^{\Sigma})$. Thus,  

\begin{equation}\label{elihand}
(S_{g'}(\Sigma),D)\approx  (S_{g}(O_{D}^{\Sigma}),D),
\end{equation} under transformations of type $(b)$.

Now, we suppose that we have two doodle diagrams $(\Sigma_{1},D_{1})$ and $(\Sigma_{2},D_{2})$, with  $(\Sigma_{1},D_{1}) \stackrel{e}{\sim}  (\Sigma_{2},D_{2})$. Then, there exist a compact and orientable surface $\Sigma_{3}$ and orientation-preserving embedding $f_{i}\colon  \Sigma_{i}\rightarrow \Sigma_{3}$, $i=1,2$, such that $f_1(D_1)$ and $f_2(D_2)$ are ambient isotopic. Then, 
\begin{enumerate}
\item[(1)] There exist an orientation-preserving homeomorphism $\varphi\colon  \Sigma_3 \rightarrow \Sigma_3$, such that $\varphi(f_1(D_1))=f_2(D_2)$. Therefore,  $(\Sigma_3,f_1(D_1))\approx (\Sigma_3,f_2(D_2))$. So, from equation (\ref{addhandle}), 
$$(S_h(\Sigma_3),f_1(D_1))\approx (\Sigma_3,f_1(D_1))\approx (\Sigma_3,f_2(D_2)) \approx (S_h(\Sigma_3),f_2(D_2)).$$
\end{enumerate}
Let $V_{D_i}^{\Sigma_i}\subset \Sigma_i$ be a regular neighborhood of $D_i$, $i=1,2$. Since, $f_i(V_{D_i}^{\Sigma_i})=V_{f(D_i)}^{\Sigma_3}$ is a regular neighborhood of $f_i(D_i)$ in $\Sigma_3$, $i=1,2$, from Equation (\ref{elihand}),

\begin{enumerate}
\item[(2)]  $(S_{g_i'}(\Sigma_i),D_i)\approx (S_{g_i}(V_{D_i}^{\Sigma_i}),D_i)$, $i=1,2$, and 
\item[(3)] $(S_{h}(\Sigma_3),f_i(D_i))\approx \left(S_{h_i}\left(V_{f_i(D_i)}^{\Sigma_3}\right),f_i(D_i)\right)$, $i=1,2$. Note that $h_i=g_i$, $i=1,2.$
\end{enumerate}
On the other hand, from the fact that $f_1$ and $f_2$ are orientation-preserving  embedding, then 
\begin{enumerate}
\item[(4)] $(V_{D_i}^{\Sigma_i},D_i)\approx (V_{f_i(D_i)}^{\Sigma_3},f_i(D_i))$, hence $(S_{g_i}(V_{D_i}^{\Sigma_i}),D_i)\approx (S_{h_i}(V_{f_i(D_i)}^{\Sigma_3}),f_i(D_i))$, $i=1,2$.
\end{enumerate}
Thus, we have the following sequence,
\[
\begin{array}{ccl}
(S_{g_1'}(\Sigma_1),D_1)&\approx& (S_{g_1}(V_{D_1}^{\Sigma_1}),D_1)\approx (S_{h_1}(V_{f_1(D_1)}^{\Sigma_3}),f_1(D_1))\approx (S_{h}(\Sigma_3),f_1(D_1))\\
&&\\
&\approx&(S_{h}(\Sigma_3),f_2(D_2))\approx \left(S_{h_2}\left(V_{f_2(D_2)}^{\Sigma_3}\right),f_2(D_2)\right)\approx (S_{g_2}(V_{D_2}^{\Sigma_2}),D_2)\\
&&\\
&\approx&(S_{g_2'}(\Sigma_2),D_2) .
\end{array}
\]
In this way, if $(\Sigma,D)\sim (\Sigma',D')$, then there exist a sequence of surface doodles $\{(\Sigma_i,D_i)\}_{i=1}^{k}$, such that 
\[
(\Sigma,D)\stackrel{R}{\sim} (\Sigma_1,D_1)\stackrel{R}{\sim} \cdots \stackrel{R}{\sim}(\Sigma_k,D_k)\stackrel{R}{\sim} (\Sigma',D'),
\]
then
\[
(S_g(\Sigma),D)\approx (S_{g_1}(\Sigma_1),D_1)\approx \cdots \approx (S_{g_k}(\Sigma_k),D_k)\approx (S_{g'}(\Sigma'),D').
\]

Reciprocally, let $(\Sigma,D)$ and $(\Sigma',D)$ be two surface doodle diagrams such that $(S_{g}(\Sigma),D) \approx (S_{g'}(\Sigma'),D)$. Then there exist a sequence of doodle diagrams $\{(\Sigma_i,D_i)\}_{i=1}^{k}$, such that 

\[
(S_g(\Sigma),D)\approx (\Sigma_1,D_1)\approx \cdots \approx (\Sigma_k,D_k)\approx (S_{g'}(\Sigma'),D'),
\]  
and each equivalence in the sequence is one of the $(a)$-$(d)$. Thereby, if $\approx$ corresponds to the type $(a)$, then, from Theorem \ref{handletheorem}, this implies $\sim$. It is clear that any homeomorphism is an embedding function, so $(c)$ and $(d)$ implies $\sim$. Suppose that $\approx$ is given by elimination of superfluous components. Then, we have that $i\colon  \Sigma\rightarrow \Sigma \coprod \Omega$, the inclusion function and the identity $Id\colon  \Sigma \coprod \Omega \rightarrow \Sigma \coprod \Omega$, satisfy the condition of the definition of geotopy relation. Thus, $(\Sigma \coprod \Omega, D )\sim (\Sigma,D)$. 
\end{proof}

We say that a doodle diagram $(S,\gamma)$ is \textit{minimal} if the ambient surface $S$ is a connected, closed, orientable and compact surface with the minimum genus among all the doodle diagrams $(\Sigma,D)$ stably equivalent to $(S,\gamma)$.
    \begin{proposition}
        Let $(\Sigma,D)$ be a doodle diagram. Then there exists a minimal doodle diagram $(S,\gamma)$ stably equivalent to $(\Sigma,D)$. 
    \end{proposition}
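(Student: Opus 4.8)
The plan is to establish two things: first, that the stable equivalence class of $(\Sigma,D)$ always contains at least one representative whose ambient surface is connected, closed, orientable and compact; and second, that among all such representatives the genus attains a minimum, which yields the desired minimal diagram. The second part is then immediate: the genera of connected closed orientable compact ambient surfaces occurring in the class form a nonempty subset of $\N$, and by the well-ordering of $\N$ this set has a least element $g_0$. Any representative realizing $g_0$ is connected, closed, orientable and compact of minimum genus, hence minimal in the sense of the definition.

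So the real content lies in the first part, the reduction to a connected closed representative, which I would carry out in three moves, each preserving the stable equivalence class. \emph{Capping the boundary:} by Lemma~\ref{glue}, $(\Sigma,D)\sim (S_{g}(\Sigma),D)$, so I may replace $\Sigma$ by the closed orientable compact surface obtained by gluing a disk to every boundary component, removing the boundary without leaving the class. \emph{Connecting the relevant components:} the surface $S_{g}(\Sigma)$ may be disconnected, and the components of $D$ (the images of the individual circles) may be distributed among several of its connected pieces. Since $D$ is a finite $1$-complex, each such piece contains a small open disk disjoint from $D$; taking the boundaries of two such disks lying in two different pieces as the curves $\gamma^{+},\gamma^{-}$ of a $1$-handle addition, the surgery $h^{+}_{\gamma^{+},\gamma^{-}}$ tubes these two pieces into one. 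By Theorem~\ref{handletheorem} this stays in the class, and finitely many such additions fuse all components meeting $D$ into a single connected surface containing $D$.

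\emph{Discarding the rest:} the remaining components are now disjoint from $D$, so by the elimination of superfluous components (operation~(b) of Theorem~\ref{defequiv}, realized concretely by the inclusion/identity geotopy exhibited in its proof) I may delete them, arriving at a connected, closed, orientable, compact surface $S$ with $D\subset S$ and $(S,D)\sim (\Sigma,D)$. This shows the set of admissible genera is nonempty, and the well-ordering argument of the first paragraph then produces the minimal diagram $(S,\gamma)$.

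The step I expect to require the most care is the connecting move: one must verify that suitable attaching disks can always be found in the complement of $D$ inside each component (which holds because the complement of the $1$-complex $D$ is open and $2$-dimensional, so it contains an embedded disk whose boundary is null-homologous and bounds it), and that tubing pieces lying in different connected components is genuinely permitted. This last point is exactly what Definition~\ref{1hadleaddition} allows, since there $\gamma^{+}$ and $\gamma^{-}$ are only required to be disjoint null-homologous curves bounding disjoint simply connected regions disjoint from $D$, with no hypothesis that they lie in the same component; and it is covered by Theorem~\ref{handletheorem}, which validates arbitrary $h^{+}$ surgeries. Everything else reduces to bookkeeping with the well-ordering principle.
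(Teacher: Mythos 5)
Your proof is correct, and its skeleton is the same as the paper's: exhibit at least one connected, closed, orientable, compact representative in the stable equivalence class, then apply the well-ordering principle to the set of genera of such representatives and take a representative of least genus. The difference lies entirely in the nonemptiness step, which the paper dispatches in a single sentence --- $S_{g}(\Sigma)$ belongs to the class by Lemma~\ref{glue} --- tacitly treating $S_{g}(\Sigma)$ as connected. Since the paper never assumes its surfaces are connected (disconnected ambient surfaces are explicitly contemplated, e.g.\ in operation (b) of Theorem~\ref{defequiv}), $S_{g}(\Sigma)$ can well be disconnected, and the paper's proof as written does not cover that case. Your three moves --- capping $\partial(\Sigma)$ by Lemma~\ref{glue}, tubing together the components that meet $D$ by $1$-handle additions (which Definition~\ref{1hadleaddition} does permit across distinct components, since the attaching curves need only bound disjoint disks missing $D$, and which Theorem~\ref{handletheorem} keeps inside the class), and deleting the components disjoint from $D$ via the inclusion/identity geotopy used in the proof of Theorem~\ref{defequiv} --- supply exactly the missing reduction to a connected closed representative. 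So your argument buys completeness: it proves the proposition for an arbitrary, possibly disconnected, ambient surface, whereas the paper's one-line nonemptiness claim is only literally valid when $S_{g}(\Sigma)$ happens to be connected; what the paper's version buys in exchange is brevity.
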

\begin{proof}
    Let $(\Sigma, D)$ be a doodle diagram, and let $\mathscr{M}$ be the collection of all connected, closed, orientable, and compact surfaces $S$ for which there exists a doodle diagram $(S, \sigma)$ stably equivalent to $(\Sigma, D)$. The set $\mathscr{M}$ is not empty because $S_{g}(\Sigma)$ belongs to $\mathscr{M}$, see Lemma \ref{glue}. We now consider the set $M = \{ \text{genus}(S) \mid S \in \mathscr{M} \}$. Then, by the well-ordering principle, $M$ has a minimal element, say $N_{0}$. Therefore, there exists a doodle diagram $(S, \gamma)$ with $\text{genus}(S) = N_0$, stably equivalent to $(\Sigma, D)$. Thereby, $(S, \gamma)$ is the desired minimal doodle diagram.  
\end{proof}    
 A formula to compute the number $N_0$ is given in \cite[Corollary~3.5]{Ro2}, here we give a short review of such construction. Let $(\Sigma,D)$ be a doodle diagram and choose any  regular neighborhood $V_{D}^{\Sigma}\subset \Sigma$  of $D$ disjoint from the boundary of $\Sigma$. Then, $(S_{g'}(V_{D}^{\Sigma}),D)$ is a doodle diagram stably equivalent to $(\Sigma,D)$ and $genus(S_{g'}(V_{D}^{\Sigma}))\leq genus(\Sigma)$.    
If $S_{g'}(V_{D}^{\Sigma})$ is a non-connected surface, it is because $D=D_1 \coprod D_2 \coprod \cdots \coprod D_k$ has more than one connected component. Therefore, $V_{D}^{\Sigma}$ can be decomposed as $V_{D}^{\Sigma}=V_{D_1}^{\Sigma}\coprod V_{D_2}^{\Sigma}\coprod \cdots \coprod V_{D_k}^{\Sigma}$, where  $V_{D_1}^{\Sigma},\cdots,V_{D_k}^{\Sigma}$ are disjoint regular connected neighborhoods of the components $D_1,\cdots,D_k$ of $D$, respectively.  Then,  

\[S_{g'}(V_{D}^{\Sigma})=S_{g_1}(V_{D_1}^{\Sigma}) \coprod S_{g_2}(V_{D_2}^{\Sigma}) \coprod \cdots \coprod S_{g_k}(V_{D_k}^{\Sigma}).\] 
Moreover, the genus $g_i$ of $S_{g_i}(V_{D_i}^{\Sigma})$ satisfies the equality
\begin{equation*}
g_i=genus(S_{g_{i}}(V_{D_i}^{\Sigma}))=\frac{m_i-2-\mid\partial(V_{D_{i}}^{\Sigma}) \mid}{2}, i=1,2,\cdots, k, 
\end{equation*}
where $m_i$ is the number of crossing points of $D_i$ and $\mid\partial(V_{D_i}^{\Sigma}) \mid$ is the number of components of the boundary of $V_{D_i}^{\Sigma}$. Hence fore, the genus $g'$ of $S_{g'}(V_{D}^{\Sigma})$ also satisfies
\begin{equation}\label{planeequ}
g'=genus(S_{g'}(V_{D}^{\Sigma}))=\frac{m-2-\mid\partial(V_{D}^{\Sigma}) \mid}{2},
\end{equation}
where $m$ is the number of crossing points of $D$. Because, $(S_{g_i}(V_{D_i}^{\Sigma}),D_i)$ is a doodle diagram, $i=1,2,\dots,k$, we write 
\[
(S_{g'}(V_{D}^{\Sigma}),D)=(S_{g_1}(V_{D_1}^{\Sigma}),D_1)\coprod (S_{g_2}(V_{D_2}^{\Sigma}),D_2)\coprod \cdots \coprod (S_{g_k}(V_{D_k}^{\Sigma}),D_k),
\]
and we say that $(S_{g_1}(V_{D_1}^{\Sigma}),D_1),\cdots, (S_{g_k}(V_{D_k}^{\Sigma}),D_k) $ are the connected components of $(S_{g'}(V_{D}^{\Sigma}),D)$. In the case that $k=1$, we say that $(S_g(V_{D}^{\Sigma}),D)$ is a connected doodle diagram.

\begin{remark}
If we denote the surface $S_{g'}(V_{D}^{\Sigma})$ by $M_D$, then, up to homeomorphism, $M_{D}$ does not depend on either the ambient surface $\Sigma$ or the chosen regular neighborhood $V_{D}^{\Sigma}$. The surface $M_D$ is related in \cite{Ro2} with the Carter surface of $D$, see \cite{Ca} for more details.
\end{remark}

\begin{example}
    A construction of a regular neighborhood is illustrated in Figure \ref{regular}. 
      \begin{figure}[ht]
	\begin{center}
		\includegraphics[scale=0.25]
		{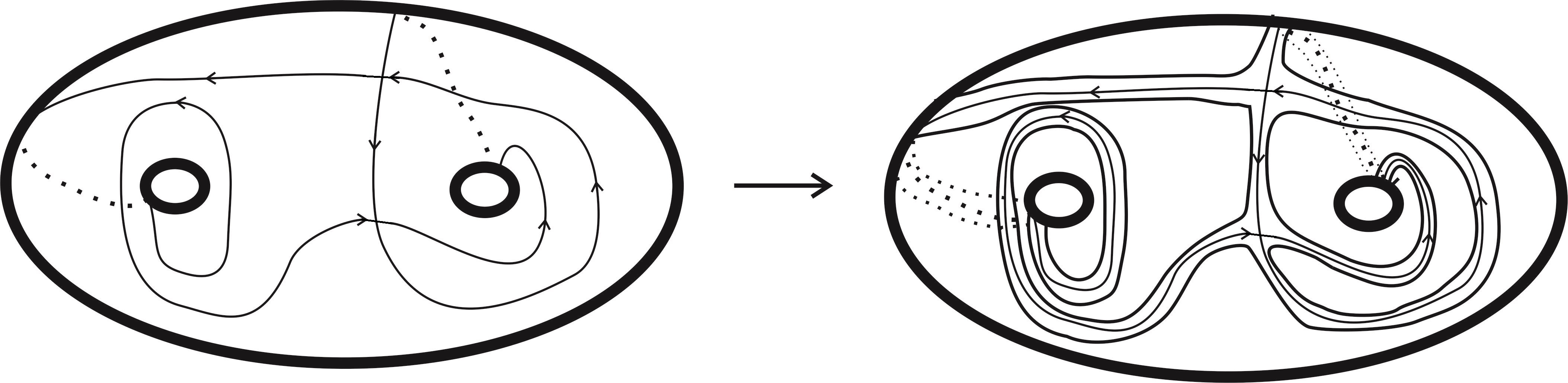}
  \caption{An example of a construction of a regular neighborhood of a doodle diagram}
  \label{regular}
	\end{center}
\end{figure}  
In this case, the number of components of the regular neighborhood of the doodle diagram is $2$. So, the minimal genus of the normal curve is $genus=\frac{2+2-2}{2}=1$. Thereby, the Carter surface of $K$ is the torus surface.
\end{example}
A doodle diagram $(\Sigma,D)$ is called \textit{almost classical} if it is geotopy equivalent to a classical doodle diagram.
    \begin{proposition}
        A doodle diagram $(\Sigma,D)$ is almost classical, if and only if  $\mid\partial(V_{D}^{\Sigma}) \mid=m-2$, where $m$ is the number of crossing points of $D$. 
    \end{proposition}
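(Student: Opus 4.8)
The plan is to reduce everything to the closed surface $M_D=S_{g'}(V_D^\Sigma)$ obtained by capping the boundary circles of a regular neighbourhood, which (by the remark following \eqref{planeequ}) is a homeomorphism invariant of $D$ alone. I would show that $(\Sigma,D)$ is almost classical exactly when $M_D\cong S^{2}$, and then read off the boundary count from \eqref{planeequ}.

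First I would treat the forward implication. Suppose $(\Sigma,D)\stackrel{e}{\sim}(S^{2},D')$ for some classical diagram, so there are a surface $\Sigma_3$ and orientation-preserving embeddings $f_1\colon\Sigma\to\Sigma_3$, $f_2\colon S^{2}\to\Sigma_3$ with $f_1(D)=f_2(D')$. Since $V_D^\Sigma$ is disjoint from $\partial\Sigma$, the set $f_1(D)$ lies in the interior of $f_1(\Sigma)$, and therefore both $f_1(V_D^\Sigma)$ and $f_2(V_{D'}^{S^{2}})$ are regular neighbourhoods in $\Sigma_3$ of the single compact $4$-valent graph $f_1(D)=f_2(D')$. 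By uniqueness of regular neighbourhoods up to ambient isotopy they are homeomorphic, so $V_D^\Sigma\cong V_{D'}^{S^{2}}$. As $V_{D'}^{S^{2}}\subset S^{2}$ is planar, so is $V_D^\Sigma$; capping its boundary thus yields $M_D\cong S^{2}$, i.e. $g'=\mathrm{genus}(M_D)=0$. Substituting $g'=0$ into \eqref{planeequ} gives $|\partial(V_D^\Sigma)|=m-2$.

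For the converse, assume $|\partial(V_D^\Sigma)|=m-2$. Then \eqref{planeequ} forces $g'=0$, so $M_D=S_{g'}(V_D^\Sigma)\cong S^{2}$ and $(M_D,D)=(S^{2},D)$ is a classical diagram. Because $V_D^\Sigma$ is planar it includes into $S^{2}$, which (taking $\Sigma_3=S^{2}$) gives $(V_D^\Sigma,D)\stackrel{e}{\sim}(S^{2},D)$; on the other hand the inclusion $V_D^\Sigma\hookrightarrow\Sigma$ together with $\mathrm{Id}_\Sigma$ gives $(V_D^\Sigma,D)\stackrel{e}{\sim}(\Sigma,D)$, exactly as in the proof of Theorem~\ref{handletheorem}. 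What remains is to splice these two geotopies, which share the middle term $(V_D^\Sigma,D)$, into the single geotopy demanded by Definition~\ref{geotopic}.

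The main obstacle is precisely this splicing, because $\stackrel{e}{\sim}$ is not transitive: one cannot in general embed both $\Sigma$ and $S^{2}$ into a common $\Sigma_3$ so that the two copies of $V_D^\Sigma$ coincide, since along $\partial(V_D^\Sigma)$ the surface $\Sigma$ attaches its (possibly positive-genus) complement while $S^{2}$ attaches discs. The way I would resolve it is to pass first to the minimal representative: as $M_D$ is independent of the ambient surface and $(\Sigma,D)$ is stably equivalent to $(M_D,D)$, the condition $g'=0$ makes the connected surface $M_D$ itself a sphere, so $(M_D,D)\cong(S^{2},D)$ is already classical. Read through $M_D$ rather than through a raw higher-genus $\Sigma$, both implications then close using only \eqref{planeequ} and the invariance of $M_D$.
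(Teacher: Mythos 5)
Your route is the paper's own: the paper disposes of this proposition in one line, as ``a direct consequence of equation (\ref{planeequ})'', i.e.\ it silently identifies \emph{almost classical} with $g'=\operatorname{genus}(S_{g'}(V_{D}^{\Sigma}))=0$ and reads the boundary count off the formula. Your forward direction fleshes out, correctly, what the paper leaves implicit: the uniqueness-of-regular-neighbourhoods argument inside $\Sigma_3$ is exactly the step needed to see that a geotopy to a classical diagram forces $V_{D}^{\Sigma}$ to be planar, hence $g'=0$, hence the stated boundary count.

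The converse is where there is a genuine gap, and the fix you propose does not close it. Passing to $(M_D,D)$ and invoking stable equivalence proves that the doodle is \emph{classical} --- its stable equivalence class contains a spherical diagram --- whereas \emph{almost classical} is defined by a single geotopy $(\Sigma,D)\stackrel{e}{\sim}(S^{2},D')$, and the paper explicitly warns that classical does not imply almost classical. Moreover, no argument can bridge this from $g'=0$ alone: if $f_1\colon\Sigma\to\Sigma_3$ and $f_2\colon S^{2}\to\Sigma_3$ realize such a geotopy, then $f_2(S^{2})$ is a closed surface embedded in $\Sigma_3$ (open by invariance of domain, closed by compactness), hence an entire connected component of $\Sigma_3$; the component $\Sigma'$ of $\Sigma$ carrying $D$ has connected image meeting $f_2(S^{2})$ in $f_1(D)\neq\emptyset$, so $f_1(\Sigma')\subseteq f_2(S^{2})$ and $\Sigma'$ itself must embed in the sphere. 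A nullhomotopic circle on the torus therefore has planar $V_{D}^{\Sigma}$ (so $g'=0$) yet is not almost classical in the literal sense: the literal biconditional fails, and the proposition is only valid under the operative reading ``almost classical $\iff M_D\cong S^{2}$'', which is the reading the paper tacitly uses both here and in the theorem that follows. So your proof is correct exactly where the paper's is, and the defect you inherit is the paper's own; but your closing claim that ``both implications then close'' overstates what the $M_D$ detour delivers --- it yields stable equivalence, not a geotopy. One further point you reproduced uncritically: (\ref{planeequ}) has a sign slip, since capping the $\mid\partial(V_{D}^{\Sigma})\mid$ boundary circles of a neighbourhood of a $4$-valent graph with $m$ vertices gives $g'=\bigl(m+2-\mid\partial(V_{D}^{\Sigma})\mid\bigr)/2$ (the paper's own example computes $(2+2-2)/2=1$), so the count characterizing $g'=0$ should be $\mid\partial(V_{D}^{\Sigma})\mid=m+2$, not $m-2$.
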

\begin{proof}
    A direct consequence of the equation (\ref{planeequ}).
\end{proof}

Let $(\Sigma, D)$ be a one-component doodle diagram, and label the crossing points of $D$ with the letters $a_1, \ldots, a_n$. For each $a_i$, we consider the two closed curves, $D_{a_i}$ and $\widetilde{D}_{a_i}$, where $D_{a_i} \subset D$ (or $\widetilde{D}_{a_i}\subset D$) is constructed by following $D$ starting from a non-crossing point.  If when $D$ travels the crossing point $a_i$, it goes from right to left (resp. left to right), then  instead of continuing along $D$, we turn to the left (resp. right) and follow $D$ until return to the starting point. The constructed curve is denoted by $D_{a_i}$ (resp. $\widetilde{D}_{a_i}$). The curves $D_{a_i}$ and $\widetilde{D}_{a_i}$ are well defined and do not depend on the starting point. Moreover, $D=D_{a_i}+D_{a_i}$ in $H_{1}(\Sigma,\mathbf{Z})$, see Figure \ref{primicurves}.

\begin{figure}[ht]
    \centering
    \tikzset{every picture/.style={line width=0.75pt}} 

\begin{tikzpicture}[x=0.75pt,y=0.75pt,yscale=-1,xscale=1]

\draw [line width=1.5]    (284.67,213.3) -- (284.67,178.3) ;
\draw [shift={(284.67,175.3)}, rotate = 90] [color={rgb, 255:red, 0; green, 0; blue, 0 }  ][line width=1.5]    (14.21,-4.28) .. controls (9.04,-1.82) and (4.3,-0.39) .. (0,0) .. controls (4.3,0.39) and (9.04,1.82) .. (14.21,4.28)   ;
\draw [line width=1.5]    (284.67,175.3) -- (327.67,175.3) ;
\draw [shift={(330.67,175.3)}, rotate = 180] [color={rgb, 255:red, 0; green, 0; blue, 0 }  ][line width=1.5]    (14.21,-4.28) .. controls (9.04,-1.82) and (4.3,-0.39) .. (0,0) .. controls (4.3,0.39) and (9.04,1.82) .. (14.21,4.28)   ;
\draw  [line width=3] [line join = round][line cap = round] (284.33,175.3) .. controls (284.33,175.19) and (284.33,175.08) .. (284.33,174.97) ;
\draw  [line width=3] [line join = round][line cap = round] (285,174.97) .. controls (284.84,174.97) and (284.52,175.25) .. (284.67,175.3) ;
\draw [line width=1.5]    (117,205.5) -- (117,124.5) ;
\draw [shift={(117,121.5)}, rotate = 90] [color={rgb, 255:red, 0; green, 0; blue, 0 }  ][line width=1.5]    (14.21,-4.28) .. controls (9.04,-1.82) and (4.3,-0.39) .. (0,0) .. controls (4.3,0.39) and (9.04,1.82) .. (14.21,4.28)   ;
\draw [line width=1.5]    (71,167.5) -- (160,167.5) ;
\draw [shift={(163,167.5)}, rotate = 180] [color={rgb, 255:red, 0; green, 0; blue, 0 }  ][line width=1.5]    (14.21,-4.28) .. controls (9.04,-1.82) and (4.3,-0.39) .. (0,0) .. controls (4.3,0.39) and (9.04,1.82) .. (14.21,4.28)   ;
\draw  [line width=3] [line join = round][line cap = round] (116.67,167.5) .. controls (116.67,167.39) and (116.67,167.28) .. (116.67,167.17) ;
\draw  [line width=3] [line join = round][line cap = round] (117.33,167.17) .. controls (117.18,167.17) and (116.85,167.45) .. (117,167.5) ;
\draw [line width=1.5]    (278.45,167.65) -- (278.65,124.5) ;
\draw [shift={(278.67,121.5)}, rotate = 90.27] [color={rgb, 255:red, 0; green, 0; blue, 0 }  ][line width=1.5]    (14.21,-4.28) .. controls (9.04,-1.82) and (4.3,-0.39) .. (0,0) .. controls (4.3,0.39) and (9.04,1.82) .. (14.21,4.28)   ;
\draw [line width=1.5]    (232.67,167.5) -- (275.45,167.64) ;
\draw [shift={(278.45,167.65)}, rotate = 180.19] [color={rgb, 255:red, 0; green, 0; blue, 0 }  ][line width=1.5]    (14.21,-4.28) .. controls (9.04,-1.82) and (4.3,-0.39) .. (0,0) .. controls (4.3,0.39) and (9.04,1.82) .. (14.21,4.28)   ;
\draw  [line width=3] [line join = round][line cap = round] (278.33,167.5) .. controls (278.33,167.39) and (278.33,167.28) .. (278.33,167.17) ;
\draw  [line width=3] [line join = round][line cap = round] (279,167.17) .. controls (278.84,167.17) and (278.52,167.45) .. (278.67,167.5) ;

\draw (119.4,170.1) node [anchor=north west][inner sep=0.75pt]   [align=left] {$\displaystyle a_{i}$};
\draw (243.7,106) node [anchor=north west][inner sep=0.75pt]   [align=left] {$\displaystyle D_{a_{i}}$};
\draw (101.67,217.17) node [anchor=north west][inner sep=0.75pt]   [align=left] {$\displaystyle ( a)$};
\draw (330.6,154.5) node [anchor=north west][inner sep=0.75pt]  [font=\scriptsize] [align=left] {$\displaystyle a_{i}$};
\draw (46.5,64.5) node [anchor=north west][inner sep=0.75pt]   [align=left] {$ $};
\draw (269.33,220.57) node [anchor=north west][inner sep=0.75pt]   [align=left] {$\displaystyle (b)$};
\draw (319.7,146.4) node [anchor=north west][inner sep=0.75pt]   [align=left] {$\displaystyle \widetilde{D}$};

\end{tikzpicture}
    \caption{Construction of the primitive curve $D_{a_i}$}
    \label{primicurves}
\end{figure}
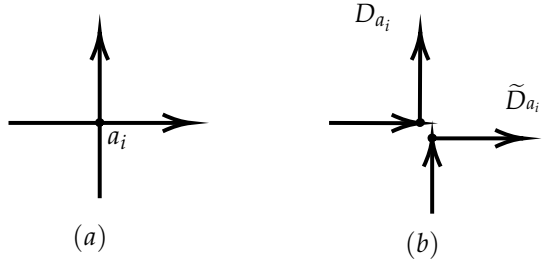

 \begin{theorem}\cite{CaEl}\label{CaElt}
     Let $(\Sigma,D)$ be a one component doodle diagram. Then, the homology group $H_1(\Sigma,\mathbf{Z})$ is generated by the homology classes $\varphi_{D},\varphi_{a_1},\ldots,\varphi_{a_n}$ represented by the curves $D,D_{a_1},\dots,D_{a_n}$, respectively, if and only if $(\Sigma,D)$ is a minimal doodle diagram.
 \end{theorem}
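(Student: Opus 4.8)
The plan is to factor the statement through the regular neighborhood $V_{D}^{\Sigma}$ of $D$ and the inclusion $\iota\colon V_{D}^{\Sigma}\hookrightarrow\Sigma$. Since $D$ is connected, $V_{D}^{\Sigma}$ is a connected thickening of the $4$-valent graph $G$ underlying $D$ (with $n$ vertices and $2n$ edges), so $H_{1}(V_{D}^{\Sigma};\mathbb{Z})\cong H_{1}(G;\mathbb{Z})$ is free of rank $n+1$, and $\varphi_{D},\varphi_{a_{1}},\dots,\varphi_{a_{n}}$ are the $\iota_{*}$-images of the cycles $[D],[D_{a_{1}}],\dots,[D_{a_{n}}]$ living in $H_{1}(V_{D}^{\Sigma};\mathbb{Z})$. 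I take $\Sigma$ connected and closed, the situation governing minimality (a boundary is capped off as in the preceding lemma). The whole theorem then reduces to the chain of equivalences: the given classes generate $H_{1}(\Sigma)$ $\iff$ $\iota_{*}$ is surjective $\iff$ the complement $W=\overline{\Sigma\setminus V_{D}^{\Sigma}}$ is a disjoint union of disks $\iff$ $\Sigma\cong S_{g'}(V_{D}^{\Sigma})=M_{D}$ $\iff$ $\operatorname{genus}(\Sigma)$ equals the minimal genus $\iff$ $(\Sigma,D)$ is minimal.

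First I would prove the combinatorial heart: the cycles $[D],[D_{a_{1}}],\dots,[D_{a_{n}}]$ form a $\mathbb{Z}$-basis of $H_{1}(V_{D}^{\Sigma})=Z_{1}(G)$. Label the $2n$ preimages of the crossings cyclically along the domain circle, let $f_{1},\dots,f_{2n}$ be the resulting edges, and pass to the partial-sum basis $S_{k}=f_{1}+\cdots+f_{k}$ of $C_{1}(G)$ (with $S_{0}=0$). Reading off the construction of $D_{a_{i}}$, one gets $D=S_{2n}$ and $D_{a_{i}}=S_{\beta_{i}}-S_{\alpha_{i}}$, where the pairs $\{\alpha_{i},\beta_{i}\}$ (the two positions of $a_{i}$, each shifted by one) form a perfect matching of $\{0,1,\dots,2n-1\}$. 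Thus these $n+1$ vectors are precisely the signed incidence vectors of the forest $\Gamma$ on the node set $\{0,\dots,2n\}$ made of that matching together with the edge $\{0,2n\}$, the coordinate $S_{0}$ being suppressed. Independence holds because a nontrivial relation would produce a $1$-cycle in $\Gamma$ (the suppressed node $0$ contributes nothing, since a boundary is balanced on each component), which is impossible in a forest; and since an incidence matrix is totally unimodular, every maximal minor lies in $\{0,\pm1\}$, so the greatest common divisor of the maximal minors is $1$, and the $n+1$ cycles span a rank-$(n+1)$ direct summand of $C_{1}(G)$, necessarily all of $Z_{1}(G)$. Consequently $\langle\varphi_{D},\varphi_{a_{1}},\dots,\varphi_{a_{n}}\rangle=\iota_{*}\bigl(H_{1}(V_{D}^{\Sigma})\bigr)=\operatorname{Im}(\iota_{*})$, which reduces the theorem to deciding when $\iota_{*}$ is onto.

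For the surjectivity step I would use the long exact sequence of the pair $(\Sigma,V_{D}^{\Sigma})$. As $V_{D}^{\Sigma}$ is connected, $\widetilde{H}_{0}(V_{D}^{\Sigma})=0$, so $H_{1}(\Sigma)\to H_{1}(\Sigma,V_{D}^{\Sigma})$ is onto and $\iota_{*}$ is surjective if and only if $H_{1}(\Sigma,V_{D}^{\Sigma})=0$. By excision $H_{1}(\Sigma,V_{D}^{\Sigma})\cong H_{1}(W,\partial W)\cong\bigoplus_{j}H_{1}(W_{j},\partial W_{j})$, and Lefschetz duality gives $H_{1}(W_{j},\partial W_{j})\cong H^{1}(W_{j})\cong\mathbb{Z}^{2h_{j}+b_{j}-1}$ for a component $W_{j}$ of genus $h_{j}$ with $b_{j}$ boundary circles; this vanishes exactly when $W_{j}$ is a disk. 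Hence $\iota_{*}$ is surjective if and only if every component of $W$ is a disk.

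Finally I would close the loop by Euler-characteristic bookkeeping, using results already in the paper. Each component of $W$ meets $\partial V_{D}^{\Sigma}$ (as $\Sigma$ is connected), so $W$ is a union of disks precisely when $\chi(W)=\lvert\partial(V_{D}^{\Sigma})\rvert$, i.e. precisely when $\Sigma$ is obtained from $V_{D}^{\Sigma}$ by capping every boundary circle with a disk, that is $\Sigma\cong S_{g'}(V_{D}^{\Sigma})=M_{D}$. By the minimal-genus formula (equation~(\ref{planeequ})) together with $\operatorname{genus}(\Sigma)\geq g'$ for any diagram stably equivalent to $(\Sigma,D)$, this holds if and only if $\operatorname{genus}(\Sigma)=g'$, i.e. if and only if $(\Sigma,D)$ is minimal. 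Chaining the equivalences gives the theorem. The main obstacle is the integrality in the first step: linear independence over $\mathbb{Q}$ is immediate from the rank count, but showing that $D$ together with the $D_{a_{i}}$ generate $H_{1}(V_{D}^{\Sigma})$ over $\mathbb{Z}$ (and not merely a finite-index subgroup) is exactly what forces the incidence/total-unimodularity argument; the homological steps are then standard.
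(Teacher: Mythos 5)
Your proof is correct, but there is nothing in the paper to compare it against: Theorem~\ref{CaElt} is imported from \cite{CaEl}, where it is established in the language of signed Gauss words, and the paper gives no internal proof. What you have written is therefore a self-contained substitute rather than a variant of the paper's argument. The genuine content you supply is the integrality step: identifying $[D],[D_{a_1}],\ldots,[D_{a_n}]$, in the partial-sum coordinates $S_k$, with the incidence vectors of a forest on $\{0,\ldots,2n\}$ and invoking total unimodularity to conclude that they span a direct summand of $C_1(G)$, hence all of $Z_1(G)\cong H_1(V_D^{\Sigma};\mathbb{Z})$; a rational rank count alone would only give a finite-index subgroup, which is not enough for a generation statement over $\mathbb{Z}$. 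The remaining steps (generation reduces to surjectivity of $\iota_*$ because the classes are the $\iota_*$-images of a basis; the pair sequence, excision and Lefschetz duality show $\iota_*$ is onto iff every complementary region is a disk; and the Euler-characteristic identity $\operatorname{genus}(\Sigma)=g'+\lvert\partial V_D^{\Sigma}\rvert-c(W)+\sum_j h_j$, with $c(W)$ the number of complementary components and $h_j$ their genera, shows this happens iff $\operatorname{genus}(\Sigma)=g'$) are standard and correctly executed. Note also that your inequality $\operatorname{genus}(\Sigma)\geq g'$, applied to an arbitrary closed connected representative and combined with the paper's remark that $M_D$ is a stable-equivalence invariant, independently re-proves that $g'$ is the minimal genus $N_0$, a fact the paper only cites from \cite{Ro2}. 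Two caveats are worth keeping explicit: first, your standing assumption that $\Sigma$ is closed and connected is not cosmetic --- as literally stated the theorem fails without it (for $\Sigma=V_D^{\Sigma}$ the classes generate $H_1(\Sigma)$, yet the diagram is not minimal because $\Sigma$ is not closed), and this restriction is implicit both in the paper's definition of minimality and in the setting of \cite{CaEl}; second, your bookkeeping uses the corrected genus formula $g'=\tfrac{n+2-\lvert\partial V_D^{\Sigma}\rvert}{2}$, whereas equation~(\ref{planeequ}) as printed carries a sign typo (compare the worked example that follows it), so that equation should not be cited verbatim in your final step.
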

We now review the definition of the intersection number; for more details, see \cite{FoMa}, \cite{Fu} and \cite[Section~2.4]{Ro2}. Let $\gamma_1$ and $\gamma_2$ be two transverse, oriented, and generically immersed curves on a surface $\Sigma$ with $\gamma_1 \cap \gamma_2=\{c_1,\ldots,c_k\}$. We choose a non crossing point  $x$ on $\gamma_1$. We follow the curve $\gamma_{1}$ writing down an ordered set $\gamma_1 \nearrow \gamma_2$ with the crossing labels $c_j$'s we meet on the way, with the convention that we add $c_{i}^{+1}$ (or $c^{-1}_{i}$) if when $\gamma_1$ travels the crossing point $c_i$, $\gamma_2$ goes from left to right (right to left). The \textit{intersection pairing number}, denoted by $\left\langle\gamma_1,\gamma_2 \right\rangle$, is defined as the sum of the superscript of the elements of $\gamma_1 \nearrow \gamma_2$. It is well known that all homology classes $\varphi_1$ and $\varphi_2$ in $H_{1}(\Sigma,\mathbf{Z})$ are represented by transverse and oriented generically immersed curves $\gamma_1$ and $\gamma_2$, respectively, and that the number $\left\langle \varphi_1,\varphi_2  \right\rangle=\left\langle\gamma_1,\gamma_2 \right\rangle$ is well defined, and called the \textit{homology intersection number} between $\varphi_1$ and $\varphi_2$. Homology intersection defines an skew-symmetric bi-linear map  $\left\langle \ \, \ \ \right\rangle \colon H_{1}(\Sigma,\mathbf{Z})\times H_{1}(\Sigma,\mathbf{Z}) \rightarrow \mathbf{Z}$, in other words, for every $\varphi_i\in H_{1}(\Sigma,\mathbf{Z})$, $i=1,2,3$ and $t\in \mathbf{Z}$, we have
\begin{itemize}
    \item $\left\langle \varphi_1,\varphi_2 \right\rangle=-\left\langle \varphi_2,\varphi_1 \right\rangle$,
    \item $\left\langle \varphi_1,\varphi_2 + t \varphi_3\right\rangle=\left\langle \varphi_1,\varphi_2\right\rangle+t\left\langle \varphi_1, \varphi_3\right\rangle$.
\end{itemize}
Thus, we have that $\left\langle \varphi,\varphi \right\rangle=0$ for every $\varphi \in  H_{1}(\Sigma,\mathbf{Z})$.
\begin{theorem}
     A one-component doodle diagram $(\Sigma,D)$ is almost classical if and only if the  homology intersection number between the elements of the subset $\{ \varphi_{D},\varphi_{a_1},\ldots,\varphi_{a_n} \}$ of the first homology group $H_{1}(\Sigma,\mathbf{Z})$ is zero.
\end{theorem}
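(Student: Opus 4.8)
The plan is to transfer the question to the minimal doodle diagram stably equivalent to $(\Sigma,D)$, and there combine the generation statement of Theorem~\ref{CaElt} with the non-degeneracy of the intersection form of a closed orientable surface. Write $M_D=S_{g'}(V_D^{\Sigma})$ for the closed surface of genus $g'$ produced from a regular neighborhood of $D$; since $D$ has a single component, $M_D$ is connected, and $(M_D,D)$ is a minimal one-component doodle diagram stably equivalent to $(\Sigma,D)$. By \eqref{planeequ} together with the characterization of almost classical diagrams established above, $(\Sigma,D)$ is almost classical precisely when $g'=0$, i.e.\ when $M_D=S^{2}$ and $H_1(M_D,\mathbf{Z})=0$. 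Because $(M_D,D)$ is minimal, Theorem~\ref{CaElt} guarantees that the classes $\varphi_D,\varphi_{a_1},\ldots,\varphi_{a_n}$ generate $H_1(M_D,\mathbf{Z})\cong\mathbf{Z}^{2g'}$.

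With this in place both implications are short. If $(\Sigma,D)$ is almost classical then $g'=0$, so $H_1(M_D,\mathbf{Z})=0$, every $\varphi_X$ with $X\in\{D,a_1,\ldots,a_n\}$ vanishes, and hence every $\langle\varphi_X,\varphi_Y\rangle=0$. Conversely, suppose all these intersection numbers vanish. As the $\varphi_X$ generate $H_1(M_D,\mathbf{Z})$ and $\langle\ ,\ \rangle$ is $\mathbf{Z}$-bilinear, the intersection form is then identically zero on $H_1(M_D,\mathbf{Z})$. But the intersection form of the closed orientable surface $M_D$ is non-degenerate (it is the standard symplectic pairing of rank $2g'$), and a non-degenerate pairing on a nonzero free $\mathbf{Z}$-module cannot vanish identically; therefore $H_1(M_D,\mathbf{Z})=0$, so $g'=0$ and $(\Sigma,D)$ is almost classical.

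The point demanding the most care --- and the main obstacle --- is that the statement measures the intersection numbers inside $H_1(\Sigma,\mathbf{Z})$, whereas the argument above lives in $H_1(M_D,\mathbf{Z})$; I must check that the two agree. This rests on the fact that the pairing number $\langle\gamma_1,\gamma_2\rangle$ is a local combinatorial count over the crossings of $\gamma_1$ and $\gamma_2$, each contributing $\pm1$ according to the orientation of the ambient surface. The chosen representatives $D,D_{a_1},\ldots,D_{a_n}$ are all assembled from arcs of $D$, so their mutual crossings are crossings of $D$ and are unaffected by the ambient surface. Concretely, under an orientation-preserving embedding $f\colon\Sigma\rightarrow\Sigma_3$ no crossing is created or destroyed and every sign is preserved, so $\langle f(\gamma_1),f(\gamma_2)\rangle=\langle\gamma_1,\gamma_2\rangle$; tracking the embeddings and disk-gluings that realize the stable equivalence $(\Sigma,D)\sim(M_D,D)$ in Theorem~\ref{handletheorem} --- throughout which $D$ and its regular neighborhood are left untouched --- shows the intersection numbers are invariant. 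Hence the numbers occurring in the statement coincide with those computed on $M_D$, and the argument closes.
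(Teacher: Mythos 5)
Your proof is correct and takes essentially the same route as the paper: pass to the minimal diagram $(S_{g'}(V_{D}^{\Sigma}),D)$, invoke Theorem~\ref{CaElt} to see that $\varphi_{D},\varphi_{a_1},\ldots,\varphi_{a_n}$ generate its first homology, and conclude via equation~(\ref{planeequ}) that almost classical is equivalent to the vanishing of all the pairings. The only difference is that you spell out two steps the paper leaves implicit --- the non-degeneracy of the intersection form on the closed surface (so that an identically vanishing form forces $g'=0$) and the agreement of the pairing numbers computed in $\Sigma$ with those computed in $M_D$ --- which fills in detail rather than changing the argument.
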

\begin{proof}
    Let $V_{D}^{\Sigma}\subset \Sigma$ be a regular neighborhood of $D$ in $\Sigma$, then $\{ \varphi_{D},\varphi_{a_1},\ldots,\varphi_{a_n} \} \subset H_{1}(S_{g}(V_{D}^{\Sigma})$. Since $(S_{g}(V_{D}^{\Sigma}),D)$ is a minimal doodle diagram,  from Theorem \ref{CaElt}, $\{ \varphi_{D},\varphi_{a_1},\ldots,\varphi_{a_n} \}$ is a generating set for the first homology group  $H_{1}(S_{g}(V_{D}^{\Sigma}),\mathbf{Z})$ of $\Sigma$. Thus, $genus(S_{g}(V_{D}^{\Sigma}))=0$ if and only if 
    \begin{equation}\label{albe}
    \left\langle \varphi_{D},\varphi_{a_i} \right\rangle=0 \text{ and } \left\langle \varphi_{a_i},\varphi_{a_j} \right\rangle=0 \text{ for all } i,j.
    \end{equation}
    Therefore, $(\Sigma,D)$ is almost classical if and only if it satisies the equations (\ref{albe}). 
\end{proof}

\begin{definition}
  Let $(\Sigma,D)$ and $(\Sigma',D')$ be two doodles diagram. Then, $(\Sigma,D)$ and $(\Sigma',D')$ are said to be  stably Reidemeister equivalent or, for simplicity, stably R-equivalent if and only if one of them can be changed into the other one by a finite sequence of the following operations
\begin{enumerate}
\item stably equivalence and 
\item monogon and bigon type moves, see figure \ref{f4}.
\begin{figure}[ht]
	\begin{center}
		\includegraphics[scale=0.6]
		{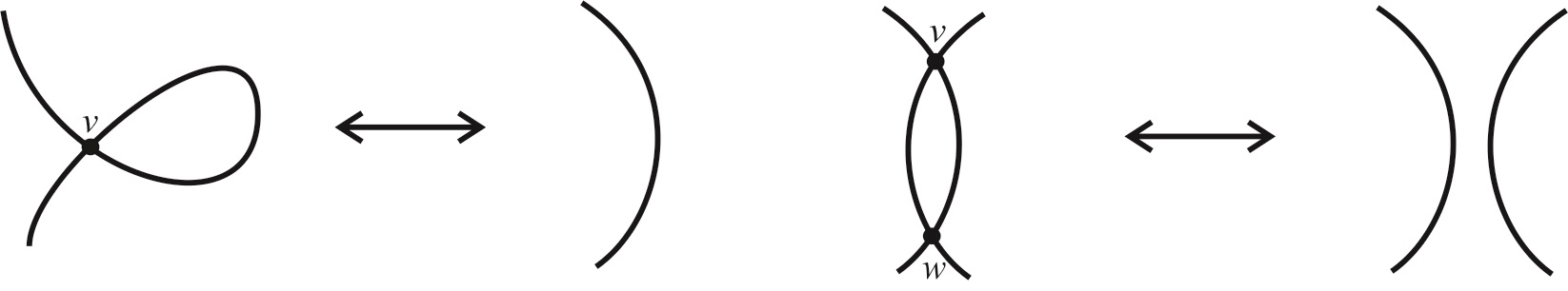}
		\caption{monogon and bigon type moves}
		\label{f4}
	\end{center}
\end{figure}  
\end{enumerate}
\end{definition}

It is not hard to prove that the stably R-equivalent is in fact an equivalence relation. An \textit{oriented doodle} or \textit{doodle} is defined as an equivalence class of a doodle diagram under the stably R-equivalence. A doodle is called \textit{classical} if its equivalence class has a classical doodle diagram. It is clear that if a doodle is almost classical then it is classical, the reciprocal is not always true. A doodle is \textit{trivial} if its equivalence class has the doodle diagram without crossing points. The \textit{recognition doodle problem} consists in determining when a doodle is classical or not. 

\section{Skew-symmetric augmented matrices}\label{sec:skew}

For any natural number $n>0$,  a \textit{skew-symmetric augmented matrix} of dimension $n$ is a matrix of the form $(B\mid A)$, where $B$ is a skew-symmetric matrix of order $n\times n$ and $A$ is a vector of order $n\times 1$. We define the skew-symmetric augmented matrix of order zero as the empty matrix $(\ \ \mid \ \ )$. The set of skew-symmetric augmented matrices is denoted by $\mathbf{Skew}$.

\begin{definition}
    Let $(B\mid A)$ and $(B'\mid A')$ be two skew-symmetric augmented matrices. We say that $(B'\mid A')$ is a permutation of $(B\mid A)$ if there exists a permutation matrix $P$ such that $(B'\mid A')=(PBP^{T}\mid PA)$, where $P^{T}$ denotes the transpose of the matrix $P$.      
\end{definition}
It is  worth to point out here that if $(B\mid A)$ is a skew-symmetric augmented matrix, then for every permutation matrix $P$, $(PBP^{T}\mid PA)$ is also a skew-symmetric augmented matrix. Thereby, permutation is a well-defined relation on the set $\mathbf{Skew}$. 
\begin{proposition}\label{perp}
   Let $(\Sigma,D)$ be a one-component doodle diagram with $n$ crossing points labeled with the letters $a_1,\ldots,a_n$, and let $\varphi_{D},\varphi_{a_1},\ldots,\varphi_{a_n}$  the homology classes in the first homology group $H_{1}(\Sigma,\mathbf{Z})$ of $\Sigma$  represented by the curves $D,D_{a_1},\dots,D_{a_n}$, respectively. Then, the augmented matrix $(\beta(D)\mid \alpha(D))$ defined as

\begin{equation}
    (\beta(D)\mid \alpha(D))=\left( 
	\begin{array}{ccc|c}
	\left\langle \varphi_{a_1},\varphi_{a_1} \right\rangle & \cdots & \left\langle \varphi_{a_1},\varphi_{a_n} \right\rangle & \left\langle \varphi_{a_1},\varphi_D \right\rangle\\ 
	\vdots & \ddots & \vdots & \vdots\\ 
	\left\langle \varphi_{a_n},\varphi_{a_1} \right\rangle & \cdots & \left\langle \varphi_{a_n},\varphi_{a_n} \right\rangle & \left\langle \varphi_{a_n},\varphi_D \right\rangle 
	\end{array}%
 \right),
\end{equation}
is a skew-symmetric augmented matrix. Moreover, if $(\beta(D)\mid \alpha(D))$ and $(\beta'(D)\mid \alpha'(D))$ are two  skew-symmetric augmented matrices obtained by two labelings of the crossing points of $D$, then one of these matrices is a permutation of the other one.
\end{proposition}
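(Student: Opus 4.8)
The plan is to verify the two assertions separately, each reducing to a short computation with the homology intersection pairing established above.

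First, to see that $(\beta(D)\mid\alpha(D))$ lies in $\mathbf{Skew}$, I would invoke directly the two recalled properties of the homology intersection number: its skew-symmetry $\langle\varphi_{a_i},\varphi_{a_j}\rangle=-\langle\varphi_{a_j},\varphi_{a_i}\rangle$ gives $\beta(D)^{T}=-\beta(D)$, and the identity $\langle\varphi,\varphi\rangle=0$ forces the diagonal entries $\langle\varphi_{a_i},\varphi_{a_i}\rangle$ to vanish. Hence $\beta(D)$ is a skew-symmetric matrix of order $n\times n$, while $\alpha(D)$ is by construction a column vector of order $n\times 1$, so $(\beta(D)\mid\alpha(D))$ has exactly the required shape. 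This part carries no real content beyond the bilinear structure already in place.

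For the second assertion, I would first stress that the curve $D_{a_i}$, and hence its homology class, is attached to the geometric crossing point and not to the symbol used to name it; the construction makes no reference to the label, and the text already records that $D_{a_i}$ is independent of the starting point. Consequently a second labeling of the crossing points differs from the first only by a permutation $\sigma$ of the index set $\{1,\dots,n\}$, and the class carried by the point called $a_i$ under the second labeling is precisely the class called $\varphi_{a_{\sigma(i)}}$ under the first, i.e. $\varphi'_{a_i}=\varphi_{a_{\sigma(i)}}$, while the distinguished class $\varphi_D$ is unchanged. It then remains a direct substitution: using bilinearity I would compute the new entries as $\beta'_{ij}=\langle\varphi'_{a_i},\varphi'_{a_j}\rangle=\langle\varphi_{a_{\sigma(i)}},\varphi_{a_{\sigma(j)}}\rangle=\beta_{\sigma(i),\sigma(j)}$ and $\alpha'_i=\langle\varphi'_{a_i},\varphi_D\rangle=\alpha_{\sigma(i)}$. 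Letting $P=(P_{ik})$ be the permutation matrix with $P_{ik}=\delta_{k,\sigma(i)}$, a one-line index computation gives $(PBP^{T})_{ij}=B_{\sigma(i),\sigma(j)}$ and $(PA)_i=A_{\sigma(i)}$ for any $B$ and $A$; applied with $B=\beta(D)$ and $A=\alpha(D)$ this yields $(\beta'(D)\mid\alpha'(D))=(P\beta(D)P^{T}\mid P\alpha(D))$, which is exactly the definition of a permutation of $(\beta(D)\mid\alpha(D))$.

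The argument is essentially bookkeeping, and the one point demanding care is the middle step: one must justify cleanly that relabeling induces a genuine permutation $\sigma$ and that the associated homology classes are merely permuted, not otherwise altered, so that the passage from the geometric data to the matrix-level conjugation $B\mapsto PBP^{T}$ is faithful. I expect no serious obstacle beyond fixing the permutation-matrix convention consistently, so that the row/column index shuffle matches $\sigma$ rather than $\sigma^{-1}$.
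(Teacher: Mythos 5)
Your proposal is correct and follows essentially the same route as the paper: the skew-symmetry of $(\beta(D)\mid\alpha(D))$ is read off from the recalled properties of the homology intersection pairing, and the relabeling statement is proved by extracting the permutation $\sigma$ that mediates between the two labelings (the paper writes it as $\sigma(i)=j$ iff $(g\circ f^{-1})(a_i)=b_j$ for the two labeling bijections $f,g$) and checking that the associated permutation matrix $P$ satisfies $(P\beta(D)P^{T}\mid P\alpha(D))=(\beta'(D)\mid\alpha'(D))$. Your version is in fact slightly more explicit than the paper's, since you carry out the index computation $(PBP^{T})_{ij}=B_{\sigma(i),\sigma(j)}$, $(PA)_i=A_{\sigma(i)}$ and flag the $\sigma$ versus $\sigma^{-1}$ convention, both of which the paper leaves implicit.
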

\begin{proof}
    Let us denote the set of crossing points of $D$ by $\rtimes(D)$. Two labelings of $\rtimes(D)$ can then be defined using bijective maps $f\colon  \rtimes(D)\rightarrow \{a_1,\ldots,a_n\}$  and $g\colon  \rtimes(D)\rightarrow \{b_1,\ldots,b_n\}$. Suppose these labelings define skew-symmetric augmented matrices  $(\beta(D)\mid \alpha(D))$ and $(\beta'(D)\mid \alpha'(D))$, respectively. Thus, if $\sigma\in Sym(n)$ is the permutation given by $\sigma(i)=j$ if and only if $(g\circ f^{-1})(a_i)=b_j$, then, the permutation matrix $P$ associated with $\sigma$ satisfies $(P\beta(D)P^{T}\mid P\alpha(D))=(\beta'(D)\mid \alpha'(D))$. 
\end{proof}
It is worth noting that surgeries of types $h^{+}$ and $h^{-1}$, as well as the elimination of redundant connected components of the ambient surface, do not affect the doodle diagram. Besides, the homology intersection number remains unchanged under orientation-preserving homeomorphisms. Thus, we have the proof of the following lemma. 
\begin{lemma}\label{stable}
   Let $(\Sigma,D)$ be a one-component doodle diagram. Up to permutations, the skew-symmetric augmented matrix $(\beta(D)\mid \alpha(D))$ is invariant under stable equivalency relation.  Therefore, the doodle diagram  $(\Sigma,D)$ is almost classical if and only if $(\beta(D)\mid \alpha(D) )$ is the null matrix.
\end{lemma}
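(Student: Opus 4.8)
The statement has two parts, and I would prove them in the order they appear. For the invariance under stable equivalence, the plan is to trace through \reth{defequiv}, which characterizes stable equivalence as a finite sequence of the three operations listed there, and check that each operation leaves the pair $(\beta(D)\mid\alpha(D))$ unchanged up to permutation. The remark preceding the lemma already isolates why this works: surgeries $h^{+}$ and $h^{-}$ together with deletion of superfluous components do not alter the doodle diagram $D$ itself, and an orientation-preserving homeomorphism $\varphi\colon\Sigma\to\Sigma'$ induces an isomorphism $\varphi_{*}$ on $H_{1}$ that preserves the homology intersection number. So I would argue operation by operation.

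\textbf{Handling the surgeries.} For $h^{+}$ and $h^{-}$ the key point is that the curves $D$, $D_{a_1},\dots,D_{a_n}$ all lie in a regular neighborhood $V_{D}^{\Sigma}$ of $D$, and by \reth{CaElt} their classes generate $H_{1}$ of the minimal surface $S_{g}(V_{D}^{\Sigma})$. Since a surgery $h^{\pm}$ is performed along curves disjoint from $D$ (hence from $V_{D}^{\Sigma}$), the inclusion $V_{D}^{\Sigma}\hookrightarrow\Sigma$ and $V_{D}^{\Sigma}\hookrightarrow h^{\pm}(\Sigma)$ both restrict to the identity on $D$ and on each $D_{a_i}$; one checks that the intersection numbers $\langle\varphi_{a_i},\varphi_{a_j}\rangle$ and $\langle\varphi_{a_i},\varphi_{D}\rangle$, being defined by the combinatorial crossing data of these curves inside $V_{D}^{\Sigma}$, are computed entirely within $V_{D}^{\Sigma}$ and are therefore unaffected. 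Deletion of a connected component disjoint from $D$ changes nothing, since none of the generating curves meets that component. For the homeomorphism operation, $\varphi$ sends $D\mapsto D'$ and correspondingly $D_{a_i}\mapsto D'_{a_i}$ up to relabeling the crossings; because $\varphi$ is orientation-preserving, the signed crossing convention used to define $\langle\,\cdot\,,\,\cdot\,\rangle$ is preserved, so $\langle\varphi_{a_i},\varphi_{a_j}\rangle=\langle\varphi'_{a_i},\varphi'_{a_j}\rangle$ and likewise for the last column. The induced relabeling of crossings is exactly a permutation of the index set, which by \repr{perp} realizes the resulting matrix as a permutation of the original. Composing these observations along a finite chain of operations gives invariance up to permutation.

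\textbf{The characterization and the main obstacle.} For the second assertion I would invoke the preceding theorem: $(\Sigma,D)$ is almost classical if and only if all intersection numbers among $\{\varphi_{D},\varphi_{a_1},\dots,\varphi_{a_n}\}$ vanish. But vanishing of every such number is precisely the statement that both the skew-symmetric block $\beta(D)$ and the augmenting column $\alpha(D)$ are zero, i.e. that $(\beta(D)\mid\alpha(D))$ is the null matrix; note that the diagonal entries $\langle\varphi_{a_i},\varphi_{a_i}\rangle$ are automatically zero by skew-symmetry, so no separate check is needed there. I expect the main subtlety to be the surgery step: one must be careful that the generating curves and the crossing-based definition of the intersection pairing are genuinely local to $V_{D}^{\Sigma}$, so that attaching or removing a handle elsewhere — which does change $H_{1}(\Sigma)$ as an abstract group — does not change the pairing values among these specific generators. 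Making that locality precise (rather than merely asserting ``the doodle is unaffected'') is the crux, and it rests on \reth{CaElt} identifying the minimal model $S_{g}(V_{D}^{\Sigma})$ as the surface where these classes live and pair canonically.
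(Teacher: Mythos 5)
Your proposal is correct and takes essentially the same route as the paper: the paper's entire proof consists of the remark immediately preceding the lemma (surgeries of type $h^{+}$, $h^{-}$ and deletion of superfluous components leave the doodle diagram, hence the curves $D, D_{a_1},\dots,D_{a_n}$ and their pairwise signed intersection counts, untouched, while orientation-preserving homeomorphisms preserve the homology intersection pairing), combined implicitly with Theorem~\ref{defequiv}, Proposition~\ref{perp}, and the theorem characterizing almost classical diagrams by vanishing intersection numbers. Your locality argument for the surgery step (and the observation about the diagonal entries) merely makes explicit what the paper asserts without detail, so no discrepancy in method arises.
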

Let us introduce the following elementary  transformations on the set of skew-symmetric augmented matrices. It is important to note that these transformations encodes the monogon and bigon movements in terms of augmented matrices.
\begin{definition}[Elementary transformations]
	Let $(B \mid A)$ be a skew-symmetric augmented matrix. Let $B^{(j)}$ denote the $j^{th}$ column of the matrix $B$ and $A_r$ the $r^{th}$ component of the vector $A$.
	
	\begin{enumerate}
	    \item If there exists $j\in\{1,\ldots,n\}$ such that $a_{j}=0$ and $(B^{(j)}
	=(0)_{n\times1}$ or $B^{(j)}=A)$, then we reduce the augmented matrix $(B \mid A)$ by simultaneously eliminating its $j$th column and the $j$th row. This type of reduction is called \textbf{reduction of type 1}.
 \item If there are $r,t\in\{1,\ldots,n\}$ such that 
	$A_{r}=-A_{t}$ and $B^{(r)}+B^{(t)}=A$, then we reduce the augmented matrix $(B \mid A)$ by simultaneously eliminating its  columns $r$ and $s$ and its rows $r$ and $s$. This type of reduction is called \textbf{reduction of type 2}.
	\end{enumerate}   
 The inverse of a reduction of type 1 is called \textbf{extension of type 1} and the inverse of a reduction of type 2 is called \textbf{extension of type 2}. 
\end{definition}
Since the elementary transformations are well defined on the set $\mathbf{Skew}$, we are able to introduce the following definition. 

\begin{definition}
    Let $(B\mid A)$ and $(B'\mid A')$ be two skew-symmetric augmented matrices. We say that they are $S$-equivalent, denoted by $\asymp$ if there exists a finite collection $\{(B_{i}\mid A_{i})\}_{i=0}^{n+1}$ of skew-symmetric augmented matrices such that $(B_{0}\mid A_{0})=(B\mid A)$, $(B_{n+1}\mid A_{n+1})=(B'\mid A')$ and $(B_{i+1}\mid A_{i+1})$ is obtained from $(B_i\mid A_i)$ by using only a permutation or an elementary transformation.   
\end{definition}
It is not hard to verify that the relation previously defined is an equivalence relation. The $S$-equivalence class of the skew-symmetric augmented matrix $(B\mid A)$ is denoted by $[B\mid A]$. 

\begin{theorem}
    Let $(\Sigma,D)$ be an one-component doodle diagram. Then the map $\lambda\colon  \mathbf{\mathcal{DC}}\rightarrow \mathbf{Skew}$, where $\mathbf{\mathcal{DC}}$ denotes the set of one-component doodle diagrams and  $\lambda((\Sigma,D))=(\beta(D)\mid \alpha(D))$, extends to a function $\Lambda\colon  \mathbf{\mathcal{DC}} /_{\stackrel{e}{\sim} } \rightarrow \mathbf{Skew}/_{\asymp}$. We will use the notation $\Lambda([(\Sigma,D)])=[\beta(D)\mid \alpha(D)]$.
\end{theorem}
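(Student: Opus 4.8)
The plan is to show that both sources of ambiguity in $\lambda$ --- the choice of labeling and the choice of diagram within a geotopy class --- are absorbed by the relation $\asymp$, so that $\lambda$ descends to a single-valued function on $\mathcal{DC}/\stackrel{e}{\sim}$. First I would stress that $\lambda((\Sigma,D))=(\beta(D)\mid\alpha(D))$ is only defined after a labeling $a_1,\ldots,a_n$ of the crossing points has been fixed, so a priori $\lambda$ is multivalued. By Proposition~\ref{perp}, any two labelings of one and the same diagram yield matrices related by a permutation $(P\beta(D)P^{T}\mid P\alpha(D))$. Since a permutation is one of the two moves allowed in the definition of $\asymp$, these matrices lie in the same $S$-equivalence class. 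Hence the composite $\mathcal{DC}\xrightarrow{\lambda}\mathbf{Skew}\to\mathbf{Skew}/_{\asymp}$ is a genuine, labeling-independent function, which I will call $\widehat{\lambda}$.

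Next I would show that $\widehat{\lambda}$ is constant on geotopy-related diagrams. The equivalence relation on $\mathcal{DC}$ generated by $\stackrel{e}{\sim}$ is precisely the stable equivalence $\sim$ of Definition~\ref{geotopic2}, so it suffices to verify that $(\Sigma,D)\stackrel{e}{\sim}(\Sigma',D')$ forces $(\beta(D)\mid\alpha(D))\asymp(\beta(D')\mid\alpha(D'))$. This is exactly the content of Lemma~\ref{stable}: a geotopy is realized by orientation-preserving embeddings $f_1,f_2$ into a common surface with $f_1(D)=f_2(D')$, which induces a bijection of the crossing sets and carries the generating curves $D,D_{a_i}$ to homologous curves, while the homology intersection number is invariant under such embeddings. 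Consequently the two matrices agree up to a permutation, and permutations are contained in $\asymp$, so $\widehat{\lambda}(\Sigma,D)=\widehat{\lambda}(\Sigma',D')$. Because $\asymp$ is an equivalence relation, and in particular transitive, this equality propagates along any finite chain of geotopies, so $\widehat{\lambda}$ factors through the quotient to give the desired $\Lambda$ with $\Lambda([(\Sigma,D)])=[\beta(D)\mid\alpha(D)]$.

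The main obstacle is conceptual rather than computational: one must be careful that $\stackrel{e}{\sim}$ is reflexive and symmetric but \emph{not} transitive, so ``$\mathcal{DC}/\stackrel{e}{\sim}$'' has to be read as the quotient by the equivalence relation it generates, and well-definedness of $\Lambda$ then reduces to invariance across a single geotopy step together with transitivity of $\asymp$. The only genuine verification is the bookkeeping that the bijection of crossing points induced by $f_1,f_2$ preserves the skew-symmetric intersection form and therefore realizes the comparison of matrices as a permutation --- but this is already packaged into Proposition~\ref{perp} and Lemma~\ref{stable}, so beyond assembling these ingredients no further argument is required. I would also note that the full strength of $\asymp$, namely the elementary transformations encoding monogon and bigon moves, is not yet invoked here: geotopy invariance only calls on the permutation moves, and the extra moves are built into the target in anticipation of later passing to stably R-equivalence.
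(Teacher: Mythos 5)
There is a genuine gap, and it stems from your reading of the quotient. You take $\stackrel{e}{\sim}$ in the statement literally as geotopy, reduce everything to Proposition~\ref{perp} and Lemma~\ref{stable}, and explicitly set aside the elementary transformations as ``not yet invoked here.'' But the paper's own proof opens with precisely the opposite accounting: ``From Proposition~\ref{perp} and Lemma~\ref{stable} we only have to prove the theorem for monogon and bigon type moves.'' That is, the labeling and stable-equivalence invariance you establish is the part the paper dismisses in one sentence, while the entire substance of the paper's proof is the verification you omit: that the Reidemeister-type moves on diagrams are matched by the elementary transformations on matrices. Concretely, the paper shows (i) if $D$ has a monogon at $a_n$ and $K$ is the diagram after its elimination, then $D$ is homologous to $K$, each $\psi_{a_i}$ is homologous to $\varphi_{a_i}$ for $i<n$, and $D_{a_n}$ is either null-homologous or homologous to $D$; in either case $\left\langle \varphi_{a_n},\varphi_D\right\rangle=0$ and the last column of $\beta(D)$ is either the zero vector or equals $\alpha(D)$, so $(\beta(K)\mid\alpha(K))$ is a reduction of type~1 of $(\beta(D)\mid\alpha(D))$; and (ii) if $D$ has a bigon at $a_{n-1},a_n$, then $\varphi_{a_{n-1}}+\varphi_{a_n}=\varphi_D$, which forces $\left\langle\varphi_{a_n},\varphi_D\right\rangle=-\left\langle\varphi_{a_{n-1}},\varphi_D\right\rangle$, and a crossing-by-crossing cancellation argument (the equivalence $a_k^{\epsilon}\in D_{a_i}\nearrow D \iff a_k^{-\epsilon}\in D_{a_i}\nearrow D$ for $k\in\{n-1,n\}$) shows the remaining entries are unchanged while the two deleted columns of $\beta(D)$ sum to $\alpha(K)$, exhibiting a reduction of type~2. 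None of this appears in your proposal.

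The omission is not a harmless matter of interpretation, because the surrounding text shows the intended domain is the set of doodles, i.e.\ stably R-equivalence classes: under your literal reading the theorem would be an immediate repackaging of Lemma~\ref{stable} (and the elementary transformations in the definition of $\asymp$ would never be used), whereas the corollary that follows — $\Lambda([w])=[\ \ \mid\ \ ]$ for a classical doodle — evaluates $\Lambda$ on a stably R-equivalence class and needs exactly the monogon/bigon invariance: the matrix of a classical diagram on $S^{2}$ is a null matrix of some positive size, and only repeated type-1 reductions bring it to the empty matrix. Your closing remark that the extra moves are ``built into the target in anticipation of later passing to stably R-equivalence'' concedes the point: that passage is this theorem, and the anticipated verification is its proof. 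To repair the proposal you would need to carry out the two homological computations above, which are the heart of the paper's argument.
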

\begin{proof}
From Proposition \ref{perp} and Lemma \ref{stable} we only have to prove the theorem for monogon and bigon type moves.  In fact, Let $(\Sigma,D)$ be an one-component doodle diagram with crossing points labeled with the letters $a_1,\ldots,a_n$ and let $\varphi_{D},\varphi_{a_1},\ldots,\varphi_{a_n}$ be the homology classes in $H_1(\Sigma,\mathbb{Z})$ represented by $D,D_{a_1},\ldots,D_{a_n}$.

\begin{enumerate} 

\item From Proposition \ref{perp}, we do not loose generality if we assume that $(\Sigma, D)$ has a monogon at the crossing point $a_n$. Let $(\Sigma,K)$ be the doodle diagram obtained by eliminating  such monogon of $(\Sigma,D)$ and let us  denote by $\psi_K$, $\psi_{a_1}$,\ldots,$\psi_{a_{n-1}}$ the homology classes in $H_1(\Sigma,\mathbf{Z})$ represented by the closed curves $K$, $K_{a_1}$,\ldots,$K_{a_{n-1}}$, respectively. Then, $D$ is homologous to $K$. Moreover, $\psi_{a_i}$ is homologous to $\varphi_{a_i}$, for all $i=1,\ldots,n-1$. Therefore,
\[
\left\langle \varphi_{a_i},\varphi_D \right\rangle=\left\langle \psi_{a_i},\psi_K \right\rangle \text{ and } \left\langle \varphi_{a_k},\varphi_{a_t} \right\rangle=\left\langle \psi_{a_k},\psi_{a_t} \right\rangle, \text{ for all } i,k,t=1,\ldots,n-1.
\]
On the other hand, we have two cases, $D_{a_n}$ is null-homologous, see Figure \ref{primimon}-$(a)$ or $D_{a_n}$  is homologous to $D$, see Figure \ref{primimon}-$(b)$. In any way, $\left\langle \varphi_{a_n},\varphi_D \right\rangle=0$. 
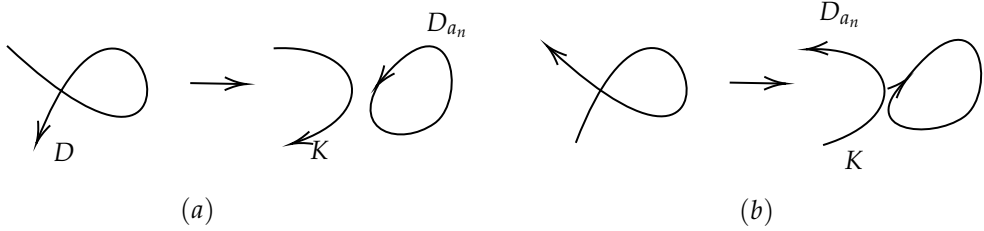
\begin{figure}[ht]
    \centering
  \tikzset{every picture/.style={line width=0.75pt}} 

\begin{tikzpicture}[x=0.75pt,y=0.75pt,yscale=-1,xscale=1]

\draw    (48.93,42.13) .. controls (165.73,152.53) and (113.73,-39.87) .. (63.73,91.33) ;
\draw [shift={(63.73,91.33)}, rotate = 290.86] [color={rgb, 255:red, 0; green, 0; blue, 0 }  ][line width=0.75]    (10.93,-3.29) .. controls (6.95,-1.4) and (3.31,-0.3) .. (0,0) .. controls (3.31,0.3) and (6.95,1.4) .. (10.93,3.29)   ;
\draw    (141.33,60.93) -- (167.67,61.41) ;
\draw [shift={(169.67,61.44)}, rotate = 181.03] [color={rgb, 255:red, 0; green, 0; blue, 0 }  ][line width=0.75]    (10.93,-3.29) .. controls (6.95,-1.4) and (3.31,-0.3) .. (0,0) .. controls (3.31,0.3) and (6.95,1.4) .. (10.93,3.29)   ;
\draw    (183.4,43.4) .. controls (227.88,40.96) and (239.7,76.22) .. (193.36,91.28) ;
\draw [shift={(191.93,91.73)}, rotate = 342.86] [color={rgb, 255:red, 0; green, 0; blue, 0 }  ][line width=0.75]    (10.93,-3.29) .. controls (6.95,-1.4) and (3.31,-0.3) .. (0,0) .. controls (3.31,0.3) and (6.95,1.4) .. (10.93,3.29)   ;
\draw    (236.17,61.83) .. controls (222.67,85.08) and (247.06,93.46) .. (264.49,81.27) .. controls (281.74,69.21) and (274,13.83) .. (236.24,61.2) ;
\draw [shift={(235.08,62.67)}, rotate = 307.8] [color={rgb, 255:red, 0; green, 0; blue, 0 }  ][line width=0.75]    (10.93,-3.29) .. controls (6.95,-1.4) and (3.31,-0.3) .. (0,0) .. controls (3.31,0.3) and (6.95,1.4) .. (10.93,3.29)   ;
\draw    (322.73,43.52) .. controls (437.27,150.37) and (385.54,-39.46) .. (335.79,91.08) ;
\draw [shift={(320.99,41.88)}, rotate = 43.39] [color={rgb, 255:red, 0; green, 0; blue, 0 }  ][line width=0.75]    (10.93,-3.29) .. controls (6.95,-1.4) and (3.31,-0.3) .. (0,0) .. controls (3.31,0.3) and (6.95,1.4) .. (10.93,3.29)   ;
\draw    (413.39,60.68) -- (439.72,61.16) ;
\draw [shift={(441.72,61.19)}, rotate = 181.03] [color={rgb, 255:red, 0; green, 0; blue, 0 }  ][line width=0.75]    (10.93,-3.29) .. controls (6.95,-1.4) and (3.31,-0.3) .. (0,0) .. controls (3.31,0.3) and (6.95,1.4) .. (10.93,3.29)   ;
\draw    (453.95,43.81) .. controls (497.22,42.51) and (507.77,77.66) .. (460.49,92.23) ;
\draw [shift={(451.96,43.9)}, rotate = 356.86] [color={rgb, 255:red, 0; green, 0; blue, 0 }  ][line width=0.75]    (10.93,-3.29) .. controls (6.95,-1.4) and (3.31,-0.3) .. (0,0) .. controls (3.31,0.3) and (6.95,1.4) .. (10.93,3.29)   ;
\draw    (501.59,60.15) .. controls (477.11,85.28) and (514.41,89.97) .. (531.49,78.02) .. controls (548.92,65.83) and (540.84,9.45) .. (502.08,59.42) ;
\draw [shift={(503.17,58.58)}, rotate = 136.15] [color={rgb, 255:red, 0; green, 0; blue, 0 }  ][line width=0.75]    (10.93,-3.29) .. controls (6.95,-1.4) and (3.31,-0.3) .. (0,0) .. controls (3.31,0.3) and (6.95,1.4) .. (10.93,3.29)   ;

\draw (416.5,117.5) node [anchor=north west][inner sep=0.75pt]   [align=left] {$\displaystyle ( b)$};
\draw (134.5,117) node [anchor=north west][inner sep=0.75pt]   [align=left] {$\displaystyle ( a)$};
\draw (469.83,93.42) node [anchor=north west][inner sep=0.75pt]   [align=left] {$\displaystyle K$};
\draw (456.72,18.42) node [anchor=north west][inner sep=0.75pt]   [align=left] {$\displaystyle D_{a_{n}}$};
\draw (258.67,23.67) node [anchor=north west][inner sep=0.75pt]   [align=left] {$\displaystyle D_{a_{n}}$};
\draw (200.33,88.33) node [anchor=north west][inner sep=0.75pt]   [align=left] {$\displaystyle K$};
\draw (70.67,89.67) node [anchor=north west][inner sep=0.75pt]   [align=left] {$\displaystyle D$};
\end{tikzpicture}
    \caption{Generating primitive curves in a monogon}
    \label{primimon}
\end{figure}
Moreover, 
\begin{enumerate}
 \item If $D_{a_n}$ is null-homologous, then $\left\langle \varphi_{a_i},\varphi_{a_n} \right\rangle=0$, for all $i$. Hence,
  \begin{equation*}
    (\beta(D)\mid \alpha(D))=\left( 
	\begin{array}[c]{cc|c}
	\beta(K) &0 & \alpha(K)\\
 0&0&0
	\end{array}%
 \right).
\end{equation*}
    \item If $D_{a_n}$ is homologous to $D$, then $ \left\langle \varphi_{a_i},\varphi_{a_n} \right\rangle=\left\langle \phi_{a_i},\phi_D \right\rangle = \left\langle \psi_{a_i},\psi_{K} \right\rangle$, for all  $i=1,\ldots,n-1.$ Thus,
\begin{equation*}
    (\beta(D)\mid \alpha(D))=\left( 
	\begin{array}[c]{cc|c}
	\beta(K) &\alpha(K) & \alpha(K)\\
 -\alpha(K)^{T}&0&0
	\end{array}%
 \right).
\end{equation*}
\end{enumerate}
From $(i)$ and $(ii)$, we have that  $(\beta(K)\mid \alpha(K))$ is a reduction of type 1 of $(\beta(D)\mid \alpha(D))$.

\item Suppose that $D$ has a bigon in the crossing points $a_{n-1}$ and $a_{n}$ and let $(\Sigma,K)$ be the doodle diagram obtained by eliminating the bigon of $(\Sigma,D)$ at $a_{n-1}$ and $a_{n}$. Let us again denote by $\psi_K$, $\psi_{a_1}$,\ldots,$\psi_{a_{n-2}}$ the homology classes in $H_1(\sigma,\mathbf{Z})$ represented by the closed curves $K$, $K_{a_1}$,\ldots,$K_{a_{n-2}}$, respectively. Then, we have that $D_{a_{n-1}}+D_{a_{n}}$ is homologous to $D$, see Figure \ref{primibigon}.
\begin{figure}[ht]
    \centering
    \tikzset{every picture/.style={line width=0.75pt}} 

\begin{tikzpicture}[x=0.55pt,y=0.55pt,yscale=-1,xscale=1]
\draw    (162.8,171.58) .. controls (199.08,161) and (236.49,254.52) .. (186.54,220.73) ;
\draw [shift={(185,219.67)}, rotate = 35.12] [color={rgb, 255:red, 0; green, 0; blue, 0 }  ][line width=0.75]    (10.93,-3.29) .. controls (6.95,-1.4) and (3.31,-0.3) .. (0,0) .. controls (3.31,0.3) and (6.95,1.4) .. (10.93,3.29)   ;
\draw    (222.05,171.28) .. controls (148.8,178.24) and (177.68,222.69) .. (195.25,225.42) ;
\draw [shift={(224.3,171.08)}, rotate = 175.3] [color={rgb, 255:red, 0; green, 0; blue, 0 }  ][line width=0.75]    (10.93,-3.29) .. controls (6.95,-1.4) and (3.31,-0.3) .. (0,0) .. controls (3.31,0.3) and (6.95,1.4) .. (10.93,3.29)   ;

\draw    (148.2,38.47) .. controls (171.47,31.68) and (192.33,51) .. (194.11,65.64) .. controls (195.33,77.33) and (186.33,85.67) .. (187.37,92.55) .. controls (190.26,107.61) and (209.38,107.56) .. (231.42,106.16) ;
\draw [shift={(233.12,106.05)}, rotate = 176.23] [color={rgb, 255:red, 0; green, 0; blue, 0 }  ][line width=0.75]    (10.93,-3.29) .. controls (6.95,-1.4) and (3.31,-0.3) .. (0,0) .. controls (3.31,0.3) and (6.95,1.4) .. (10.93,3.29)   ;
\draw    (209.7,37.97) .. controls (176.92,40.66) and (165.32,54.86) .. (167.31,69.67) .. controls (168.41,77.89) and (179.33,84.33) .. (181.88,93.04) .. controls (182,102.97) and (168.14,105.15) .. (151.22,106.19) ;
\draw [shift={(149.37,106.3)}, rotate = 356.78] [color={rgb, 255:red, 0; green, 0; blue, 0 }  ][line width=0.75]    (10.93,-3.29) .. controls (6.95,-1.4) and (3.31,-0.3) .. (0,0) .. controls (3.31,0.3) and (6.95,1.4) .. (10.93,3.29)   ;
\draw    (32.2,37.55) .. controls (83.1,22.7) and (98.56,108.07) .. (34.82,106.86) ;
\draw [shift={(32.87,106.8)}, rotate = 2.6] [color={rgb, 255:red, 0; green, 0; blue, 0 }  ][line width=0.75]    (10.93,-3.29) .. controls (6.95,-1.4) and (3.31,-0.3) .. (0,0) .. controls (3.31,0.3) and (6.95,1.4) .. (10.93,3.29)   ;
\draw    (93.7,37.05) .. controls (18.38,43.24) and (54.87,110.19) .. (111.16,105.71) ;
\draw [shift={(112.87,105.55)}, rotate = 174.02] [color={rgb, 255:red, 0; green, 0; blue, 0 }  ][line width=0.75]    (10.93,-3.29) .. controls (6.95,-1.4) and (3.31,-0.3) .. (0,0) .. controls (3.31,0.3) and (6.95,1.4) .. (10.93,3.29)   ;
\draw    (342.12,39.63) .. controls (327.12,35.88) and (307.67,35) .. (308.78,44.63) .. controls (314.33,54.33) and (320.05,55.47) .. (319.66,65) .. controls (318.9,83.87) and (302.79,108.35) .. (273.43,108.02) ;
\draw [shift={(271.62,107.97)}, rotate = 2.6] [color={rgb, 255:red, 0; green, 0; blue, 0 }  ][line width=0.75]    (10.93,-3.29) .. controls (6.95,-1.4) and (3.31,-0.3) .. (0,0) .. controls (3.31,0.3) and (6.95,1.4) .. (10.93,3.29)   ;
\draw    (271.37,38.63) .. controls (286.53,33.63) and (301.67,35.67) .. (302.08,44) .. controls (301.33,51) and (288,59.05) .. (289.37,68.1) .. controls (292.25,87.15) and (317.48,107.04) .. (348.93,104.64) ;
\draw [shift={(350.87,104.47)}, rotate = 174.02] [color={rgb, 255:red, 0; green, 0; blue, 0 }  ][line width=0.75]    (10.93,-3.29) .. controls (6.95,-1.4) and (3.31,-0.3) .. (0,0) .. controls (3.31,0.3) and (6.95,1.4) .. (10.93,3.29)   ;

\draw    (35.8,172.33) .. controls (86.7,157.48) and (102.16,242.85) .. (38.42,241.65) ;
\draw [shift={(36.47,241.58)}, rotate = 2.6] [color={rgb, 255:red, 0; green, 0; blue, 0 }  ][line width=0.75]    (10.93,-3.29) .. controls (6.95,-1.4) and (3.31,-0.3) .. (0,0) .. controls (3.31,0.3) and (6.95,1.4) .. (10.93,3.29)   ;
\draw    (95.05,172.04) .. controls (21.98,179.44) and (59.79,246.27) .. (116.47,240.33) ;
\draw [shift={(97.3,171.83)}, rotate = 175.3] [color={rgb, 255:red, 0; green, 0; blue, 0 }  ][line width=0.75]    (10.93,-3.29) .. controls (6.95,-1.4) and (3.31,-0.3) .. (0,0) .. controls (3.31,0.3) and (6.95,1.4) .. (10.93,3.29)   ;
\draw    (322,194.17) .. controls (337.84,205.06) and (345.07,239.96) .. (281.17,237.75) ;
\draw [shift={(279.22,237.67)}, rotate = 2.6] [color={rgb, 255:red, 0; green, 0; blue, 0 }  ][line width=0.75]    (10.93,-3.29) .. controls (6.95,-1.4) and (3.31,-0.3) .. (0,0) .. controls (3.31,0.3) and (6.95,1.4) .. (10.93,3.29)   ;
\draw    (321.61,193.87) .. controls (286.34,167.14) and (289.86,247.52) .. (362.97,237.92) ;
\draw [shift={(323.25,195.17)}, rotate = 219.5] [color={rgb, 255:red, 0; green, 0; blue, 0 }  ][line width=0.75]    (10.93,-3.29) .. controls (6.95,-1.4) and (3.31,-0.3) .. (0,0) .. controls (3.31,0.3) and (6.95,1.4) .. (10.93,3.29)   ;
\draw    (151.54,239.61) .. controls (195.99,227.72) and (220.41,229.58) .. (247,240.42) ;
\draw [shift={(149.5,240.17)}, rotate = 344.72] [color={rgb, 255:red, 0; green, 0; blue, 0 }  ][line width=0.75]    (10.93,-3.29) .. controls (6.95,-1.4) and (3.31,-0.3) .. (0,0) .. controls (3.31,0.3) and (6.95,1.4) .. (10.93,3.29)   ;
\draw    (266,171.33) .. controls (287.25,166.83) and (292.58,182.01) .. (308.25,182.08) .. controls (323.69,182.16) and (321,165.83) .. (360.18,171.08) ;
\draw [shift={(362,171.33)}, rotate = 188.31] [color={rgb, 255:red, 0; green, 0; blue, 0 }  ][line width=0.75]    (10.93,-3.29) .. controls (6.95,-1.4) and (3.31,-0.3) .. (0,0) .. controls (3.31,0.3) and (6.95,1.4) .. (10.93,3.29)   ;

\draw (298.95,98.47) node [anchor=north west][inner sep=0.75pt]   [align=left] {$\displaystyle a_{n}$};
\draw (60.95,98.55) node [anchor=north west][inner sep=0.75pt]   [align=left] {$\displaystyle a_{n}$};
\draw (53.2,12.8) node [anchor=north west][inner sep=0.75pt]   [align=left] {$\displaystyle a_{n+1}$};
\draw (169.2,13.72) node [anchor=north west][inner sep=0.75pt]   [align=left] {$\displaystyle a_{n+1}$};
\draw (313.3,236.17) node [anchor=north west][inner sep=0.75pt]   [align=left] {$\displaystyle a_{n}$};
\draw (64.55,236.33) node [anchor=north west][inner sep=0.75pt]   [align=left] {$\displaystyle a_{n}$};
\draw (56.8,147.58) node [anchor=north west][inner sep=0.75pt]   [align=left] {$\displaystyle a_{n+1}$};
\draw (183.8,146.83) node [anchor=north west][inner sep=0.75pt]   [align=left] {$\displaystyle a_{n+1}$};

\draw (342.2,27.8) node [anchor=north west][inner sep=0.75pt]  [font=\scriptsize] [align=left] {$\displaystyle D_{a_{n+1}}$};
\draw (226.95,88.3) node [anchor=north west][inner sep=0.75pt]  [font=\scriptsize] [align=left] {$\displaystyle D_{a_{n}}$};

\draw (336.3,150.58) node [anchor=north west][inner sep=0.75pt]  [font=\scriptsize] [align=left] {$\displaystyle D_{a_{n+1}}$};
\draw (218.3,240.58) node [anchor=north west][inner sep=0.75pt]  [font=\scriptsize] [align=left] {$\displaystyle D_{a_{n}}$};

\end{tikzpicture}
    \caption{Generating primitive curves in a bigon}
    \label{primibigon}
\end{figure}
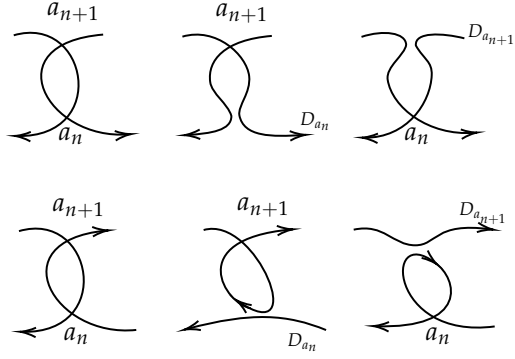

In this way, $\varphi_{a_{n-1}}+\varphi_{a_n}=\varphi_{D}$. Therefore, $ \left\langle \varphi_{a_i},\varphi_{D} \right\rangle=  \left\langle \varphi_{a_i},\varphi_{a_{n-1}} \right\rangle+ \left\langle \varphi_{a_i},\varphi_{a_n} \right\rangle,$ for all $i=1,\ldots,n.$ Thus, if $i=n$, $ \left\langle \varphi_{a_n},\varphi_{D} \right\rangle=  \left\langle \varphi_{a_n},\varphi_{a_{n-1}} \right\rangle$ and if $i=n-1$, $ \left\langle \varphi_{a_{n-1}},\varphi_{D} \right\rangle=  \left\langle \varphi_{a_{n-1}},\varphi_{a_n} \right\rangle$. As a consequence, $ \left\langle \varphi_{a_n},\varphi_{D} \right\rangle=  -\left\langle \varphi_{a_{n-1}},\varphi_{D} \right\rangle.$ On the other hand, if $i\notin \{n-1,n\}$, we have the following equivalence
\[
a_{k}^{\epsilon}\in D_{a_i}\nearrow D \iff a_{k}^{-\epsilon}\in D_{a_i}\nearrow D, \text{ for all } k\in \{n-1,n\}.
\]
Hence, $\left\langle \varphi_{a_i},\varphi_{D} \right\rangle=  \left\langle \psi_{a_{i}},\psi_{K} \right\rangle.$ Moreover, for $j\notin \{n-1,n\}$, we also have the following equivalence,
\[
a_{k}^{\epsilon}\in D_{a_i}\nearrow D_{a_j} \iff a_{k}^{-\epsilon}\in D_{a_i}\nearrow D_{a_j}, \text{ for all } k\in \{n-1,n\} .
\]
This proves that $\left\langle \varphi_{a_i},\varphi_{a_j} \right\rangle=  \left\langle \psi_{a_{i}},\psi_{a_j} \right\rangle.$ Thereby, $(\beta(D)\mid \alpha(D))$ has the form:

\begin{equation*}
    (\beta(D)\mid \alpha(D))=\left( 
	\begin{array}[c]{ccc|c}
	\beta(K) &A &B& \alpha(K)\\
 -A^{T}&0&\alpha_{n-1}(D)&\alpha_{n-1}(D)\\
 -B^{T}&-\alpha_{n-1}(D)&0&-\alpha_{n-1}(D)
 
	\end{array}%
 \right),
\end{equation*}
where $A+B=\alpha(K)$. Therefore, $(\beta(K)\mid \alpha(K))$ is a reduction of type 2 of $(\beta(D)\mid \alpha(D))$.
\end{enumerate} 
\end{proof}

    \begin{definition}
	Let $(B \mid A)$ be a skew-symmetric augmented matrix of dimension $n$. We say that $(B \mid A)$ is \textbf{trivial} if it is equivalent to the empty augmented matrix. $(B \mid A)$ is \textbf{reducible} if a reduction transformation can be applied to it. Otherwise, we say $(B \mid A)$ is \textbf{irreducible}.
\end{definition}

\begin{theorem}
	Let $\left[B\mid A\right] \in \mathbf{Skew}/_{\asymp}$. Then there exists $(B^{\prime}\mid A^{\prime})\in\left[ B\mid A\right]  $, such that $(B^{\prime}\mid A^{\prime})$ is an irreducible skew-symmetric augmented matrix. Moreover, if $(B^{\prime\prime}\mid A^{\prime\prime})$ is another representative irreducible in $\left[B\mid A\right]  $, then there exists a permutation matrix $P$ such that $(PB^{\prime}P^{T}\mid PA^{\prime})=(B^{\prime\prime}\mid A^{\prime\prime})$, so all irreducible representatives  of $\left[B\mid A\right]  $ have the same dimension.
\end{theorem}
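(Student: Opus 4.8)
The plan is to read the statement as a normal-form (confluence) theorem for the abstract rewriting system generated by the reductions, and to prove it by combining a termination argument with a local-confluence analysis, invoking Newman's Lemma. First, for existence, observe that each reduction of type 1 or type 2 strictly lowers the dimension $n$ (by one or two, respectively). Since the dimension is a nonnegative integer, no infinite chain of reductions is possible, so starting from any representative of $[B\mid A]$ and applying reductions until none is available produces an irreducible $(B'\mid A')$; as reductions are instances of elementary transformations, $(B'\mid A')\in[B\mid A]$.

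For uniqueness I would pass to the quotient $\mathcal{S}$ of $\mathbf{Skew}$ by the permutation relation, and define on $\mathcal{S}$ a one-step rewriting relation $\rightarrow$ by declaring $\bar M\rightarrow\bar N$ whenever some representative of $\bar M$ reduces to some representative of $\bar N$ by a single reduction of type 1 or type 2; conjugating a reduction by a permutation matrix shows this is well defined on classes. Two observations make $\asymp$ tractable: the dimension is permutation-invariant and strictly decreasing, so $\rightarrow$ is terminating; and since $\asymp$ is generated by permutations together with reductions and their inverses (the extensions), the relation $\asymp$ descends to precisely the equivalence closure $\leftrightarrow^{*}$ of $\rightarrow$ on $\mathcal{S}$. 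By Newman's Lemma, a terminating relation that is locally confluent is confluent, and a terminating confluent relation has unique normal forms with the property that $\bar M\leftrightarrow^{*}\bar N$ if and only if $\bar M$ and $\bar N$ have the same normal form. Since the normal forms of $\rightarrow$ are exactly the permutation classes of irreducible matrices, the two irreducible representatives $(B'\mid A')$ and $(B''\mid A'')$ must lie in the same permutation class, i.e.\ $(PB'P^{T}\mid PA')=(B''\mid A'')$ for some permutation matrix $P$, whence they share the same dimension.

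The heart of the argument, and the step I expect to be the main obstacle, is verifying local confluence: whenever two distinct reductions apply to a single matrix $(B\mid A)$, their outputs must be joinable. This splits into the pairings (type 1, type 1), (type 1, type 2) and (type 2, type 2), each further divided according to whether the index sets of the two reductions are disjoint or overlap. The disjoint cases are straightforward, since the defining equalities $a_j=0$, $B^{(j)}=0$, $B^{(j)}=A$, $A_r=-A_t$ and $B^{(r)}+B^{(t)}=A$ are coordinatewise and therefore survive the deletion of the coordinates belonging to the other reduction; performing the two reductions in either order yields the same matrix. The delicate cases are the overlapping ones, and here skew-symmetry ($b_{ii}=0$ and $b_{ij}=-b_{ji}$) is essential. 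For instance, if a type-1 reduction at an index $r$ competes with a type-2 reduction on a pair $\{r,t\}$, then $a_t=-a_r=0$, and the relation $B^{(r)}+B^{(t)}=A$ together with $B^{(r)}=0$ (resp.\ $B^{(r)}=A$) forces $B^{(t)}=A$ (resp.\ $B^{(t)}=0$); thus the column $t$ is itself type-1 reducible, and the two competing moves meet after one additional type-1 step. Handling the remaining overlaps in the same spirit establishes local confluence, and the confluence machinery above then completes the proof.
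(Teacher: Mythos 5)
Your proposal is correct, and it takes a genuinely different route from the paper. The paper proves existence the same way in spirit (well-ordering on dimensions instead of termination of rewriting), but its uniqueness argument is an induction on the length of the chain of elementary transformations joining two irreducibles, maintaining the invariant that every intermediate matrix has dimension at least $m$ and that any matrix in the chain of dimension exactly $m$ is a permutation of $(B''\mid A'')$; the crucial step there is the claim that when the chain dips back down to dimension $m$, ``the simplification used to change $(B_{k}\mid A_{k})$ into $(B_{k+1}\mid A_{k+1})$ is the same used to change $(B_{j}\mid A_{j})$ into $(B_{j+1}\mid A_{j+1})$,'' which is asserted rather than verified and which only treats peaks of height one or two above $m$ without justifying that the segment above $m$ consists solely of permutations. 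Your Newman's-Lemma formulation makes exactly this missing content explicit as a critical-pair analysis, and the analysis does go through: the disjoint overlaps survive coordinatewise deletion (including the case $B^{(j)}=A$, since the same coordinate is deleted from both sides); your type-1/type-2 overlap resolution (the relations $A_{t}=-A_{r}=0$ and $B^{(r)}+B^{(t)}=A$ forcing the partner column $t$ to be type-1 reducible) is right; and the one nontrivial case you leave implicit, two type-2 reductions sharing an index, say at $\{r,t\}$ and $\{t,u\}$, also resolves, because the two defining relations force $A_{r}=A_{u}$ and $B^{(r)}=B^{(u)}$ (hence equal rows by skew-symmetry), so the two outputs are literally equal after the relabeling $u\leftrightarrow r$, i.e.\ equal in the permutation quotient. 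You should also record, as you indicate, that conjugation by a permutation matrix carries reductions to reductions, so the one-step relation is well defined on the quotient and Newman's Lemma applies there. What your approach buys is a complete, standard Church--Rosser proof of uniqueness of normal forms (and, as a bonus, the stronger statement that $S$-equivalence is decidable by reducing both sides to normal form); what the paper's approach buys is avoiding rewriting machinery, at the cost of a peak-elimination step that, as written, has a genuine gap which your confluence analysis fills.
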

\begin{proof}
  Let $\left(B\mid A\right) \in \mathbf{Skew}$ be a skew-symmetric augmented matrix and let 
   \[\mathcal{F}=\{m\in \mathbb{N} \cup \{0\} \mid \exists (B'\mid A')\in\left[B\mid A\right] \text{s. t. $m$ is the order of  }(B'\mid A')   \}.
   \] 
Because $\mathcal{F} \subset \mathbb{N}\cup \{0\}$ and $\mathcal{F} \neq \emptyset$, then from the well-ordering principle, there exists a minimal element $n_{0}\in \mathcal{F}$ which corresponds to a skew-symmetric augmented matrix $(B'\mid A')\in\left[B\mid A\right]$. Due to the minimality of $n_0$, $(B'\mid A')$ has to be irreducible. Besides, let $(B''\mid A'')$ be another irreducible in $\left[B\mid A\right]$ of dimension $m$. Because, $m\in \mathcal{F}$, then $n_{0}\leq m$. Since, $(B''\mid A'')$ and $(B'\mid A')$ are $S$-equivalent, then there exists a finite collection $\{(B_{i}\mid A_{i})\}_{i=0}^{n}$ of skew-symmetric augmented matrices such that 
\begin{equation}
(B''\mid A'')=(B_{0}\mid A_{0}) \asymp (B_{1}\mid A_{1})\asymp (B_{2}\mid A_{2})\asymp \cdots \asymp (B_{n-1}\mid A_{n-1})\asymp (B_{n}\mid A_{n})=(B'\mid A'),
    \label{coll01}
\end{equation}
where $(B_{i+1}\mid A_{i+1})$ is obtained from $(B_i\mid A_i)$ by using only a permutation or an elementary transformation. We denote the dimension of  $(B_{i}\mid A_{i})$ by $m_i$, and we prove that, for every $i=0,\ldots,n$, $m\leq m_i$ and if $m=m_i$ then $(B_{i}\mid A_{i})$ is a permutation of $(B''\mid A'')$. Let us proceed by induction on the length $n$ of the collections of the form given in Equation (\ref{coll01}). If $n=1$, then $(B'\mid A')$ has to be a permutation of $(B''\mid A'')$, thus $m\leq m_1=n_0$. Suppose that the sentence is true for collections of length $k$ and suppose that we have a collection $\{(B_{i}\mid A_{i})\}_{i=0}^{k+1}$ that satisfies (\ref{coll01}).  Hence, $n_{i}\geq m$ and, if $n_i=m$, then $(B_i\mid A_)$ is a permutation of $(B''\mid A'')$, for every $i=1,2,\ldots,k$. If $n_{k}=m$, then  $(B_k\mid A_k)$ is a permutation of $(B''\mid A'')$. So, $(B_{k+1}\mid A_{k+1})$ has to be either a permutation or an extension of $(B_k\mid A_k)$. In any case, $m\leq n_{k+1}$ and if $m= n_{k+1}$, we have that  $(B_{k+1}\mid A_{k+1})$ is a permutation of $ (B''\mid A'')$ . On the other hand,  if $n_{k}>m$, we have either $n_{k+1}=n_k$ or $n_{k+1}>n_k$ or $n_{k+1}=n_k - 1$, thus $n_{k+1} \geq m$. In this way, if $n_{k+1}=m$, then $(B_{k+1}\mid A_{k+1})$ is a simplification of $(B_{k}\mid A_{k})$. Therefore, we have two cases: In the first case, $n_{k}=m+1$ then there exists $n_{j}$ such that $n_{j}=m$ and $(B_{j+1}\mid A_{j+1})\asymp (B_{j+2}\mid A_{j+2}) \asymp \cdots \asymp (B_{k}\mid A_{k})$ is a sequence which only uses permutations and $(B_{j+1}\mid A_{j+1})$ is an extension of $(B_{j}\mid A_{j})$ of type $1$. Thus, the simplification used to change $(B_{k}\mid A_{})$ into $(B_{k+1},A_{k+1})$ is the same used to change $(B_{j}\mid A_{j})$ into $(B_{j+1}\mid A_{j+1})$. Therefore,  $(B_{k+1}\mid A_{k+1})$ is a permutation of $(B_{j}\mid A_{j})$ which is a permutation of $(B''\mid A'')$, so  $(B_{k+1}\mid A_{k+1})$ is a permutation  $(B''\mid A'')$. The second case, $n_{k}=m+2$, comes in the same fashion as the first one. As a consequence, $m\leq m_{n}=n_0$, and therefore $m=n_0$. From the previous analysis,  $(B'\mid A')$ is a permutation of  $(B''\mid A'')$.

\end{proof}

    \begin{corollary}
	If $(\Sigma,D)$ is a classical doodle, then $\Lambda([w])=[\ \ \mid \ \ ]$
\end{corollary}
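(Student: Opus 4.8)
The plan is to produce a concrete classical representative of the doodle and compute its augmented matrix by hand, then transport the result back along the invariance of $\Lambda$. By definition, a classical doodle has a classical doodle diagram in its stably R-equivalence class, so the class $[w]$ contains some $(S^{2},D')$ whose ambient surface is the sphere. The first step is to record that the homology classes $\varphi_{D'},\varphi_{a_{1}},\ldots,\varphi_{a_{n'}}$ attached to $D'$ (where $n'$ is the number of crossing points of $D'$) all vanish, simply because $H_{1}(S^{2},\mathbf{Z})=0$. Since the homology intersection pairing is bilinear, every entry of $(\beta(D')\mid\alpha(D'))$ is then an intersection number of zero classes, hence $0$; that is, $\lambda((S^{2},D'))=(\beta(D')\mid\alpha(D'))$ is the $n'\times(n'+1)$ null matrix. (Alternatively, one may invoke Lemma~\ref{stable}: a classical doodle diagram is in particular almost classical, being geotopy equivalent to itself, so its augmented matrix is null.)

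The second step is to check that the null matrix is trivial with respect to $\asymp$, i.e.\ that it is $S$-equivalent to the empty augmented matrix. For the null matrix of any dimension $n'>0$, every column satisfies $B^{(j)}=(0)_{n'\times 1}$ and every component satisfies $A_{j}=0$, so the hypothesis of a reduction of type $1$ is met at each column. Applying such a reduction removes one row and one column, leaving a null matrix of dimension $n'-1$; iterating this $n'$ times collapses $(\beta(D')\mid\alpha(D'))$ to the empty augmented matrix $(\ \ \mid \ \ )$. Hence $[\beta(D')\mid\alpha(D')]=[\ \ \mid \ \ ]$.

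The final step is to close the loop using the well-definedness of $\Lambda$. The preceding theorem, together with Proposition~\ref{perp} and Lemma~\ref{stable}, shows that the class $[\beta(D)\mid\alpha(D)]$ is unchanged along a stably R-equivalence: relabelings of the crossing points produce permutations, stable equivalences preserve the matrix up to permutation, and monogon and bigon moves correspond exactly to reductions of types $1$ and $2$. Therefore $\Lambda$ is a genuine invariant of the stably R-equivalence class, and passing from the class $[w]$ to its classical representative is legitimate, giving $\Lambda([w])=\Lambda([(S^{2},D')])=[\beta(D')\mid\alpha(D')]=[\ \ \mid \ \ ]$.

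I expect the only delicate point to be this last step: one must be certain that $\Lambda$ descends to the full stably R-equivalence class and not merely to the geotopy relation (which is not even transitive), so that choosing the classical representative $(S^{2},D')$ is valid. Once that invariance is in hand, the remaining ingredients — the vanishing of all homology classes on $S^{2}$ and the collapse of the null matrix under repeated type $1$ reductions — are routine.
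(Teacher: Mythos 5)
Your proof is correct and follows exactly the argument the paper intends: the paper states this as an immediate corollary (with no written proof) of the invariance theorem for $\Lambda$ together with Lemma~2, and your three steps --- vanishing of all intersection numbers since $H_{1}(S^{2},\mathbf{Z})=0$, collapse of the null matrix by iterated type~1 reductions, and transport along the stably R-equivalence invariance of $\Lambda$ --- are precisely the intended chain. Your closing caveat is also well placed: the quotient in the paper's theorem is (despite the notation $\mathcal{DC}/_{\stackrel{e}{\sim}}$) the stably R-equivalence class, which is what licenses passing to the classical representative $(S^{2},D')$.
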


The proof of the following lemma is a direct consequence of the definition and  properties of the homology intersection number, therefore we omit it.

\begin{lemma}
    Let $(\Sigma,D)$ be a one-component doodle diagram with crossing points labeled with the letters $a_1,\ldots,a_n$. Let $\varphi_{D},\varphi_{a_1},\ldots,\varphi_{a_n}$ and $\widetilde{\varphi}_{a_1},\ldots,\widetilde{\varphi}_{a_n}$ denote the equivalence classes represented by the curves $D,D_{a_1},\ldots,D_{a_n}$ and $\widetilde{D}_{a_1},\ldots,\widetilde{D}_{a_n}$, respectively. Then,
    \begin{equation}
\begin{array}{ccl}
     \left\langle \varphi_{a_i},\varphi_{D} \right\rangle&=&\left\langle \varphi_{a_i},\widetilde{\varphi}_{a_i} \right\rangle=-\left\langle \widetilde{\varphi}_{a_i},\varphi_{D} \right\rangle \text{ and }\\
     &&\\
     \left\langle \varphi_{a_i},\varphi_{a_j} \right\rangle&=&\left\langle \widetilde{\varphi}_{a_i},\widetilde{\varphi}_{a_j} \right\rangle +\left\langle \varphi_{a_i},\varphi_{D} \right\rangle-\left\langle \varphi_{a_j},\varphi_{D} \right\rangle,
\end{array}
\label{eq:rela}
    \end{equation}
    for every $i,j$.
\end{lemma}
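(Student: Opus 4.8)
The plan is to reduce the entire statement to the single homological identity recorded in the construction of the primitive curves near Figure~\ref{primicurves}, namely that for each crossing $a_i$ one has $\varphi_D=\varphi_{a_i}+\widetilde{\varphi}_{a_i}$ in $H_1(\Sigma,\mathbf{Z})$ (that is, $D$ is homologous to $D_{a_i}+\widetilde{D}_{a_i}$), and then to push this relation through the skew-symmetric bilinear pairing $\langle\,\cdot\,,\,\cdot\,\rangle$. Besides bilinearity and skew-symmetry, the only extra ingredient is the immediate consequence $\langle\varphi,\varphi\rangle=0$ for every $\varphi\in H_1(\Sigma,\mathbf{Z})$, already noted just after the pairing was introduced.

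First I would establish the first chain of equalities. Substituting $\varphi_D=\varphi_{a_i}+\widetilde{\varphi}_{a_i}$ and expanding linearly in the second slot gives $\langle\varphi_{a_i},\varphi_D\rangle=\langle\varphi_{a_i},\varphi_{a_i}\rangle+\langle\varphi_{a_i},\widetilde{\varphi}_{a_i}\rangle=\langle\varphi_{a_i},\widetilde{\varphi}_{a_i}\rangle$, where the first term vanishes by $\langle\varphi,\varphi\rangle=0$. Applying the same substitution to $-\langle\widetilde{\varphi}_{a_i},\varphi_D\rangle$ and then using skew-symmetry to turn $-\langle\widetilde{\varphi}_{a_i},\varphi_{a_i}\rangle$ into $\langle\varphi_{a_i},\widetilde{\varphi}_{a_i}\rangle$ yields the same value, which closes the first line of \reqref{rela}.

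For the second equation I would instead use the identity in the form $\widetilde{\varphi}_{a_i}=\varphi_D-\varphi_{a_i}$, and likewise $\widetilde{\varphi}_{a_j}=\varphi_D-\varphi_{a_j}$, and expand $\langle\widetilde{\varphi}_{a_i},\widetilde{\varphi}_{a_j}\rangle=\langle\varphi_D-\varphi_{a_i},\varphi_D-\varphi_{a_j}\rangle$ bilinearly. The term $\langle\varphi_D,\varphi_D\rangle$ drops out, the cross terms $-\langle\varphi_D,\varphi_{a_j}\rangle$ and $-\langle\varphi_{a_i},\varphi_D\rangle$ are rewritten by skew-symmetry, and collecting the surviving terms and solving for $\langle\varphi_{a_i},\varphi_{a_j}\rangle$ produces exactly $\langle\varphi_{a_i},\varphi_{a_j}\rangle=\langle\widetilde{\varphi}_{a_i},\widetilde{\varphi}_{a_j}\rangle+\langle\varphi_{a_i},\varphi_D\rangle-\langle\varphi_{a_j},\varphi_D\rangle$.

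No genuine analytic obstacle appears in the algebra, which is precisely why the paper omits the proof; the one point deserving care is that the whole argument rests on the geometric decomposition $\varphi_D=\varphi_{a_i}+\widetilde{\varphi}_{a_i}$. I would therefore make sure this homological relation is firmly in place for \emph{both} primitive curves $D_{a_i}$ and $\widetilde{D}_{a_i}$, since the first line uses it once and the second equation uses it at two crossings simultaneously, and I would note that the earlier display $D=D_{a_i}+D_{a_i}$ is a typographical slip that should read $D=D_{a_i}+\widetilde{D}_{a_i}$.
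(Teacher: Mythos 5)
Your proof is correct and is precisely the argument the paper has in mind: the paper omits the proof, remarking only that the lemma is a direct consequence of the definition and properties of the homology intersection number, and your derivation—substituting $\varphi_D=\varphi_{a_i}+\widetilde{\varphi}_{a_i}$ and expanding via bilinearity, skew-symmetry, and $\left\langle \varphi,\varphi\right\rangle=0$—fills in exactly that computation. Your side observation is also right: the paper's earlier display $D=D_{a_i}+D_{a_i}$ in $H_1(\Sigma,\mathbf{Z})$ is a typographical slip for $D=D_{a_i}+\widetilde{D}_{a_i}$, which is the relation your argument (correctly) rests on.
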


\begin{example}
The Kishino's knot is an important example in virtual knot theory because it cannot be differentiated from  the trivial knot by the fundamental group and the bracket polynomial. The Kishino's knot was distinguished by 3-strand Jones polynomial, the surface bracket polynomial, the quaternionic biquandle and by skew-symmetric graded matrices, see \cite{Ro2}. In this paper we use skew-symmetric augmented matrices to distinguish the Kishino's doodle or the flat Kishino knot, see Figure~\ref{kishino}. The Kishino's doodle has been distinguished from the trivial doodle in \cite[Page~15]{FeTu} by using representations of Weyl algebras and in  \cite[Theorem~4.1]{Ka}. 

 \begin{figure}[ht]
	\begin{center}
		\tikzset{every picture/.style={line width=0.75pt}} 

\begin{tikzpicture}[x=0.75pt,y=0.75pt,yscale=-1,xscale=1]

\draw  [line width=2.25]  (59.2,116.3) .. controls (59.2,81.45) and (127.07,53.2) .. (210.8,53.2) .. controls (294.53,53.2) and (362.4,81.45) .. (362.4,116.3) .. controls (362.4,151.15) and (294.53,179.4) .. (210.8,179.4) .. controls (127.07,179.4) and (59.2,151.15) .. (59.2,116.3) -- cycle ;
\draw  [line width=2.25]  (112.8,117.3) .. controls (112.8,110.29) and (122.65,104.6) .. (134.8,104.6) .. controls (146.95,104.6) and (156.8,110.29) .. (156.8,117.3) .. controls (156.8,124.31) and (146.95,130) .. (134.8,130) .. controls (122.65,130) and (112.8,124.31) .. (112.8,117.3) -- cycle ;
\draw  [line width=2.25]  (249.6,117.3) .. controls (249.6,110.29) and (259.45,104.6) .. (271.6,104.6) .. controls (283.75,104.6) and (293.6,110.29) .. (293.6,117.3) .. controls (293.6,124.31) and (283.75,130) .. (271.6,130) .. controls (259.45,130) and (249.6,124.31) .. (249.6,117.3) -- cycle ;
\draw [line width=1.5]    (107.2,71) .. controls (136.98,64.43) and (173.65,78.35) .. (180.62,113.88) .. controls (187.6,149.4) and (112.8,189) .. (93.98,126.4) .. controls (75.54,65.06) and (247.31,71.51) .. (283.18,105.3) ;
\draw [shift={(285.2,107.4)}, rotate = 229.13] [color={rgb, 255:red, 0; green, 0; blue, 0 }  ][line width=1.5]    (14.21,-6.37) .. controls (9.04,-2.99) and (4.3,-0.87) .. (0,0) .. controls (4.3,0.87) and (9.04,2.99) .. (14.21,6.37)   ;
\draw [line width=1.5]  [dash pattern={on 1.69pt off 2.76pt}]  (120,107.8) .. controls (103.2,112.2) and (67.2,101) .. (107.2,71) ;
\draw [line width=1.5]    (116,109.4) .. controls (69.5,131.37) and (143.6,160.6) .. (190,159.4) .. controls (236.4,158.2) and (273.6,153) .. (308.4,132.2) .. controls (343.2,111.4) and (302.38,68.97) .. (259.99,75.73) .. controls (217.6,82.49) and (221.86,176.37) .. (303.33,153.29) .. controls (384.8,130.2) and (346.8,95) .. (333.2,78.6) ;
\draw [line width=1.5]  [dash pattern={on 1.69pt off 2.76pt}]  (285.2,107.4) .. controls (318.31,119.53) and (305.64,69.53) .. (335.64,80.2) ;
\draw [line width=1.5]    (188.95,80.03) -- (201.7,80.64) ;
\draw [shift={(204.7,80.78)}, rotate = 182.73] [color={rgb, 255:red, 0; green, 0; blue, 0 }  ][line width=1.5]    (14.21,-4.28) .. controls (9.04,-1.82) and (4.3,-0.39) .. (0,0) .. controls (4.3,0.39) and (9.04,1.82) .. (14.21,4.28)   ;
\draw [line width=1.5]    (233.87,115.03) -- (234.1,107.78) ;
\draw [shift={(234.2,104.78)}, rotate = 91.86] [color={rgb, 255:red, 0; green, 0; blue, 0 }  ][line width=1.5]    (14.21,-4.28) .. controls (9.04,-1.82) and (4.3,-0.39) .. (0,0) .. controls (4.3,0.39) and (9.04,1.82) .. (14.21,4.28)   ;
\draw [line width=1.5]    (225.45,157.2) -- (207.6,158.98) ;
\draw [shift={(204.62,159.28)}, rotate = 354.29] [color={rgb, 255:red, 0; green, 0; blue, 0 }  ][line width=1.5]    (14.21,-4.28) .. controls (9.04,-1.82) and (4.3,-0.39) .. (0,0) .. controls (4.3,0.39) and (9.04,1.82) .. (14.21,4.28)   ;

\draw (247.7,132.28) node [anchor=north west][inner sep=0.75pt]   [align=left] {$\displaystyle a_{4}$};
\draw (224.95,69.53) node [anchor=north west][inner sep=0.75pt]   [align=left] {$\displaystyle a_{3}$};
\draw (144.95,132.78) node [anchor=north west][inner sep=0.75pt]   [align=left] {$\displaystyle a_{2}$};
\draw (153.7,66.53) node [anchor=north west][inner sep=0.75pt]   [align=left] {$\displaystyle a_{1}$};

\end{tikzpicture}

	\end{center}
 \caption{The Kishino's doodle}
 \label{kishino}
\end{figure}
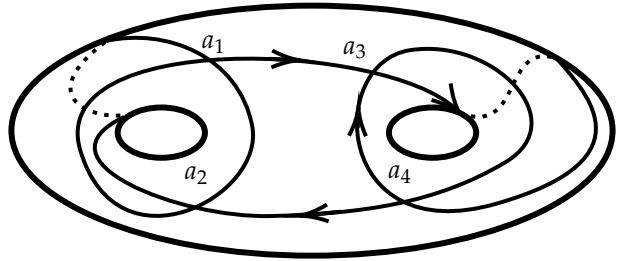 

Let us denote the Kishino's doodle diagram by $(T,D)$, where $T$ is the torus surface. In Figure \ref{kishino2} we show the construction of the generating set of primitive curves of  the first homology group $H_{1}(T,\mathbf{Z})$ of $T$. 
 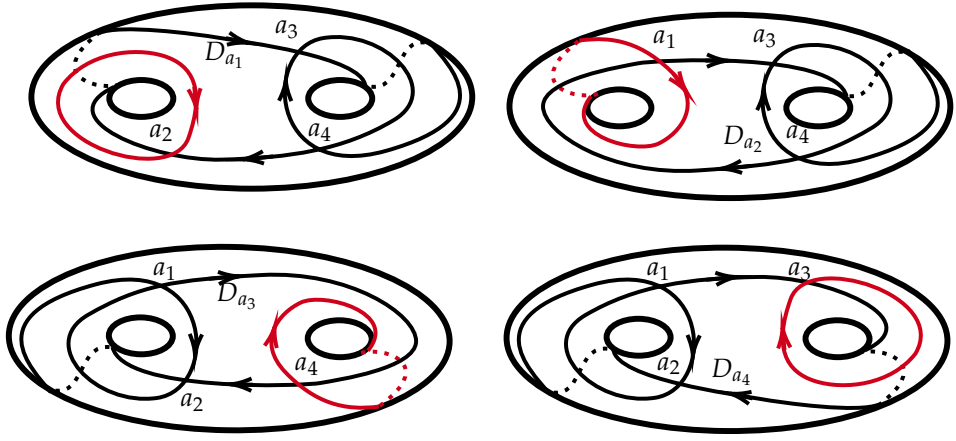
\begin{figure}[ht]
	\begin{center}
		\tikzset{every picture/.style={line width=0.55pt}} 

\begin{tikzpicture}[x=0.55pt,y=0.55pt,yscale=-1,xscale=1]
\draw  [line width=2.25]  (651,476.44) .. controls (650.73,511.29) and (582.65,539.03) .. (498.93,538.4) .. controls (415.2,537.77) and (347.54,509.01) .. (347.8,474.16) .. controls (348.07,439.31) and (416.15,411.57) .. (499.87,412.2) .. controls (583.6,412.83) and (651.26,441.59) .. (651,476.44) -- cycle ;
\draw  [line width=2.25]  (597.4,475.04) .. controls (597.35,482.05) and (587.46,487.66) .. (575.31,487.57) .. controls (563.16,487.48) and (553.35,481.72) .. (553.41,474.71) .. controls (553.46,467.69) and (563.35,462.08) .. (575.5,462.17) .. controls (587.65,462.26) and (597.46,468.02) .. (597.4,475.04) -- cycle ;
\draw  [line width=2.25]  (460.61,474.01) .. controls (460.56,481.02) and (450.66,486.63) .. (438.51,486.54) .. controls (426.36,486.45) and (416.56,480.69) .. (416.61,473.68) .. controls (416.66,466.66) and (426.55,461.05) .. (438.7,461.14) .. controls (450.85,461.24) and (460.66,467) .. (460.61,474.01) -- cycle ;
\draw [line width=1.5]  [dash pattern={on 1.69pt off 2.76pt}]  (590.13,484.48) .. controls (606.97,480.21) and (642.88,491.68) .. (602.66,521.38) ;
\draw [line width=1.5]    (594.15,482.91) .. controls (640.81,461.29) and (566.93,431.51) .. (520.52,432.36) .. controls (474.12,433.21) and (436.88,438.13) .. (401.92,458.67) .. controls (366.97,479.21) and (407.47,521.94) .. (449.91,515.5) .. controls (492.35,509.06) and (488.79,415.15) .. (407.15,437.62) .. controls (325.51,460.09) and (363.24,495.58) .. (376.72,512.08) ;
\draw [line width=1.5]  [dash pattern={on 1.69pt off 2.76pt}]  (424.94,483.64) .. controls (391.92,471.26) and (404.21,521.35) .. (374.29,510.46) ;
\draw [line width=1.5]    (516.95,515.95) -- (504.39,513.18) ;
\draw [shift={(501.46,512.53)}, rotate = 12.45] [color={rgb, 255:red, 0; green, 0; blue, 0 }  ][line width=1.5]    (14.21,-4.28) .. controls (9.04,-1.82) and (4.3,-0.39) .. (0,0) .. controls (4.3,0.39) and (9.04,1.82) .. (14.21,4.28)   ;
\draw [line width=1.5]    (476.32,476.39) -- (476.03,483.64) ;
\draw [shift={(475.91,486.64)}, rotate = 272.29] [color={rgb, 255:red, 0; green, 0; blue, 0 }  ][line width=1.5]    (14.21,-4.28) .. controls (9.04,-1.82) and (4.3,-0.39) .. (0,0) .. controls (4.3,0.39) and (9.04,1.82) .. (14.21,4.28)   ;
\draw [line width=1.5]    (485.06,434.29) -- (502.92,432.64) ;
\draw [shift={(505.91,432.36)}, rotate = 174.72] [color={rgb, 255:red, 0; green, 0; blue, 0 }  ][line width=1.5]    (14.21,-4.28) .. controls (9.04,-1.82) and (4.3,-0.39) .. (0,0) .. controls (4.3,0.39) and (9.04,1.82) .. (14.21,4.28)   ;
\draw  [line width=2.25]  (308.63,476.96) .. controls (308.13,511.81) and (239.85,539.07) .. (156.14,537.86) .. controls (72.42,536.65) and (4.96,507.42) .. (5.47,472.57) .. controls (5.97,437.73) and (74.25,410.46) .. (157.96,411.67) .. controls (241.68,412.89) and (309.14,442.12) .. (308.63,476.96) -- cycle ;
\draw  [line width=2.25]  (255.05,475.19) .. controls (254.95,482.2) and (245.02,487.74) .. (232.87,487.57) .. controls (220.72,487.39) and (210.96,481.56) .. (211.06,474.55) .. controls (211.16,467.54) and (221.09,461.99) .. (233.24,462.17) .. controls (245.39,462.35) and (255.16,468.17) .. (255.05,475.19) -- cycle ;
\draw  [line width=2.25]  (118.27,473.2) .. controls (118.17,480.22) and (108.24,485.76) .. (96.09,485.58) .. controls (83.94,485.41) and (74.17,479.58) .. (74.27,472.57) .. controls (74.37,465.55) and (84.31,460.01) .. (96.45,460.19) .. controls (108.6,460.36) and (118.37,466.19) .. (118.27,473.2) -- cycle ;
\draw [color={rgb, 255:red, 208; green, 2; blue, 27 }  ,draw opacity=1 ][line width=1.5]  [dash pattern={on 1.69pt off 2.76pt}]  (247.72,484.58) .. controls (264.58,480.42) and (300.41,492.15) .. (259.98,521.56) ;
\draw [line width=1.5]    (76.75,480.53) .. controls (84.04,511.89) and (207.62,507.52) .. (219.75,505.34) .. controls (231.89,503.15) and (285.29,495.39) .. (285.57,476.72) .. controls (285.85,458.06) and (239.53,427.89) .. (179.48,431.02) .. controls (119.42,434.15) and (95.02,438.16) .. (59.93,458.46) .. controls (24.83,478.75) and (65.03,521.77) .. (107.51,515.62) .. controls (150,509.48) and (147.1,415.54) .. (65.3,437.45) .. controls (-16.49,459.35) and (20.99,495.1) .. (34.35,511.69) ;
\draw [line width=1.5]  [dash pattern={on 1.69pt off 2.76pt}]  (82.53,482.59) .. controls (49.6,469.98) and (61.54,520.15) .. (31.7,509.05) ;
\draw [line width=1.5]    (173.44,506.52) -- (161.13,506.28) ;
\draw [shift={(158.13,506.22)}, rotate = 1.15] [color={rgb, 255:red, 0; green, 0; blue, 0 }  ][line width=1.5]    (14.21,-4.28) .. controls (9.04,-1.82) and (4.3,-0.39) .. (0,0) .. controls (4.3,0.39) and (9.04,1.82) .. (14.21,4.28)   ;
\draw [line width=1.5]    (133.97,475.7) -- (133.63,482.95) ;
\draw [shift={(133.49,485.94)}, rotate = 272.69] [color={rgb, 255:red, 0; green, 0; blue, 0 }  ][line width=1.5]    (14.21,-4.28) .. controls (9.04,-1.82) and (4.3,-0.39) .. (0,0) .. controls (4.3,0.39) and (9.04,1.82) .. (14.21,4.28)   ;
\draw [line width=1.5]    (142.99,433.66) -- (160.87,432.13) ;
\draw [shift={(163.86,431.88)}, rotate = 175.12] [color={rgb, 255:red, 0; green, 0; blue, 0 }  ][line width=1.5]    (14.21,-4.28) .. controls (9.04,-1.82) and (4.3,-0.39) .. (0,0) .. controls (4.3,0.39) and (9.04,1.82) .. (14.21,4.28)   ;
\draw [line width=1.5]    (604,520.1) .. controls (590.04,526.25) and (538.13,519.11) .. (521.86,516.24) .. controls (505.59,513.36) and (422.91,502.56) .. (421.28,482.23) ;
\draw [color={rgb, 255:red, 208; green, 2; blue, 27 }  ,draw opacity=1 ][line width=1.5]    (628.93,485.21) .. controls (612.27,506.58) and (580.17,510.47) .. (559.87,504.02) .. controls (539.57,497.58) and (531.22,480.61) .. (543.21,447.39) .. controls (555.2,414.18) and (652.91,445.19) .. (628.93,485.21) -- cycle ;
\draw [color={rgb, 255:red, 208; green, 2; blue, 27 }  ,draw opacity=1 ][line width=1.5]    (538.95,482.69) -- (537.82,470.04) ;
\draw [shift={(537.55,467.05)}, rotate = 84.91] [color={rgb, 255:red, 208; green, 2; blue, 27 }  ,draw opacity=1 ][line width=1.5]    (14.21,-4.28) .. controls (9.04,-1.82) and (4.3,-0.39) .. (0,0) .. controls (4.3,0.39) and (9.04,1.82) .. (14.21,4.28)   ;
\draw [color={rgb, 255:red, 208; green, 2; blue, 27 }  ,draw opacity=1 ][line width=1.5]    (259.98,521.56) .. controls (236.12,524.67) and (216.08,519.15) .. (195.76,498.01) .. controls (175.43,476.87) and (190.95,457.09) .. (206.46,449.91) .. controls (221.96,442.73) and (274.09,456.6) .. (252.48,483.08) ;
\draw  [line width=2.25]  (349.8,314.97) .. controls (349.8,280.12) and (417.67,251.87) .. (501.4,251.87) .. controls (585.13,251.87) and (653,280.12) .. (653,314.97) .. controls (653,349.82) and (585.13,378.07) .. (501.4,378.07) .. controls (417.67,378.07) and (349.8,349.82) .. (349.8,314.97) -- cycle ;
\draw  [line width=2.25]  (403.4,315.97) .. controls (403.4,308.95) and (413.25,303.27) .. (425.4,303.27) .. controls (437.55,303.27) and (447.4,308.95) .. (447.4,315.97) .. controls (447.4,322.98) and (437.55,328.67) .. (425.4,328.67) .. controls (413.25,328.67) and (403.4,322.98) .. (403.4,315.97) -- cycle ;
\draw  [line width=2.25]  (540.2,315.97) .. controls (540.2,308.95) and (550.05,303.27) .. (562.2,303.27) .. controls (574.35,303.27) and (584.2,308.95) .. (584.2,315.97) .. controls (584.2,322.98) and (574.35,328.67) .. (562.2,328.67) .. controls (550.05,328.67) and (540.2,322.98) .. (540.2,315.97) -- cycle ;
\draw [color={rgb, 255:red, 208; green, 2; blue, 27 }  ,draw opacity=1 ][line width=1.5]  [dash pattern={on 1.69pt off 2.76pt}]  (410.6,306.47) .. controls (393.8,310.87) and (357.8,299.67) .. (397.8,269.67) ;
\draw [line width=1.5]    (581.61,308.04) .. controls (573.86,276.79) and (450.36,282.95) .. (438.26,285.31) .. controls (426.16,287.67) and (372.88,296.21) .. (372.87,314.87) .. controls (372.86,333.54) and (419.61,363.04) .. (479.61,359.04) .. controls (539.61,355.04) and (563.95,350.67) .. (598.75,329.87) .. controls (633.55,309.07) and (592.73,266.64) .. (550.34,273.4) .. controls (507.95,280.15) and (512.21,374.04) .. (593.68,350.95) .. controls (675.15,327.87) and (637.15,292.67) .. (623.55,276.27) ;
\draw [line width=1.5]  [dash pattern={on 1.69pt off 2.76pt}]  (575.8,306.07) .. controls (608.91,318.2) and (596.24,268.2) .. (626.24,278.87) ;
\draw [line width=1.5]    (484.55,283.45) -- (496.86,283.52) ;
\draw [shift={(499.86,283.54)}, rotate = 180.32] [color={rgb, 255:red, 0; green, 0; blue, 0 }  ][line width=1.5]    (14.21,-4.28) .. controls (9.04,-1.82) and (4.3,-0.39) .. (0,0) .. controls (4.3,0.39) and (9.04,1.82) .. (14.21,4.28)   ;
\draw [line width=1.5]    (524.47,313.7) -- (524.7,306.45) ;
\draw [shift={(524.8,303.45)}, rotate = 91.86] [color={rgb, 255:red, 0; green, 0; blue, 0 }  ][line width=1.5]    (14.21,-4.28) .. controls (9.04,-1.82) and (4.3,-0.39) .. (0,0) .. controls (4.3,0.39) and (9.04,1.82) .. (14.21,4.28)   ;
\draw [line width=1.5]    (516.05,355.87) -- (498.2,357.65) ;
\draw [shift={(495.22,357.95)}, rotate = 354.29] [color={rgb, 255:red, 0; green, 0; blue, 0 }  ][line width=1.5]    (14.21,-4.28) .. controls (9.04,-1.82) and (4.3,-0.39) .. (0,0) .. controls (4.3,0.39) and (9.04,1.82) .. (14.21,4.28)   ;
\draw [color={rgb, 255:red, 208; green, 2; blue, 27 }  ,draw opacity=1 ][line width=1.5]    (397.8,269.67) .. controls (421.62,266.21) and (441.73,271.45) .. (462.36,292.29) .. controls (482.99,313.13) and (467.76,333.14) .. (452.36,340.54) .. controls (436.96,347.94) and (384.64,334.83) .. (405.86,308.04) ;
\draw  [line width=2.25]  (21,308.5) .. controls (21,273.65) and (88.87,245.4) .. (172.6,245.4) .. controls (256.33,245.4) and (324.2,273.65) .. (324.2,308.5) .. controls (324.2,343.35) and (256.33,371.6) .. (172.6,371.6) .. controls (88.87,371.6) and (21,343.35) .. (21,308.5) -- cycle ;
\draw  [line width=2.25]  (74.6,309.5) .. controls (74.6,302.49) and (84.45,296.8) .. (96.6,296.8) .. controls (108.75,296.8) and (118.6,302.49) .. (118.6,309.5) .. controls (118.6,316.51) and (108.75,322.2) .. (96.6,322.2) .. controls (84.45,322.2) and (74.6,316.51) .. (74.6,309.5) -- cycle ;
\draw  [line width=2.25]  (211.4,309.5) .. controls (211.4,302.49) and (221.25,296.8) .. (233.4,296.8) .. controls (245.55,296.8) and (255.4,302.49) .. (255.4,309.5) .. controls (255.4,316.51) and (245.55,322.2) .. (233.4,322.2) .. controls (221.25,322.2) and (211.4,316.51) .. (211.4,309.5) -- cycle ;
\draw [line width=1.5]  [dash pattern={on 1.69pt off 2.76pt}]  (81.8,300) .. controls (65,304.4) and (29,293.2) .. (69,263.2) ;
\draw [line width=1.5]    (77.8,301.6) .. controls (31.3,323.57) and (105.4,352.8) .. (151.8,351.6) .. controls (198.2,350.4) and (235.4,345.2) .. (270.2,324.4) .. controls (305,303.6) and (264.18,261.17) .. (221.79,267.93) .. controls (179.4,274.69) and (183.66,368.57) .. (265.13,345.49) .. controls (346.6,322.4) and (308.6,287.2) .. (295,270.8) ;
\draw [line width=1.5]  [dash pattern={on 1.69pt off 2.76pt}]  (247,299.6) .. controls (280.11,311.73) and (267.44,261.73) .. (297.44,272.4) ;
\draw [line width=1.5]    (154.75,267.98) -- (167.33,270.66) ;
\draw [shift={(170.26,271.29)}, rotate = 192.02] [color={rgb, 255:red, 0; green, 0; blue, 0 }  ][line width=1.5]    (14.21,-4.28) .. controls (9.04,-1.82) and (4.3,-0.39) .. (0,0) .. controls (4.3,0.39) and (9.04,1.82) .. (14.21,4.28)   ;
\draw [line width=1.5]    (195.67,307.23) -- (195.9,299.98) ;
\draw [shift={(196,296.98)}, rotate = 91.86] [color={rgb, 255:red, 0; green, 0; blue, 0 }  ][line width=1.5]    (14.21,-4.28) .. controls (9.04,-1.82) and (4.3,-0.39) .. (0,0) .. controls (4.3,0.39) and (9.04,1.82) .. (14.21,4.28)   ;
\draw [line width=1.5]    (187.25,349.4) -- (169.4,351.18) ;
\draw [shift={(166.42,351.48)}, rotate = 354.29] [color={rgb, 255:red, 0; green, 0; blue, 0 }  ][line width=1.5]    (14.21,-4.28) .. controls (9.04,-1.82) and (4.3,-0.39) .. (0,0) .. controls (4.3,0.39) and (9.04,1.82) .. (14.21,4.28)   ;
\draw [line width=1.5]    (67.67,264.48) .. controls (81.58,258.23) and (133.54,264.98) .. (149.83,267.73) .. controls (166.12,270.49) and (248.88,280.66) .. (250.67,300.98) ;
\draw [color={rgb, 255:red, 208; green, 2; blue, 27 }  ,draw opacity=1 ][line width=1.5]    (43,299.57) .. controls (59.5,278.07) and (91.57,273.94) .. (111.92,280.23) .. controls (132.27,286.53) and (140.74,303.43) .. (129,336.73) .. controls (117.26,370.04) and (19.32,339.76) .. (43,299.57) -- cycle ;
\draw [color={rgb, 255:red, 208; green, 2; blue, 27 }  ,draw opacity=1 ][line width=1.5]    (133,301.41) -- (134.22,314.05) ;
\draw [shift={(134.51,317.04)}, rotate = 264.48] [color={rgb, 255:red, 208; green, 2; blue, 27 }  ,draw opacity=1 ][line width=1.5]    (14.21,-4.28) .. controls (9.04,-1.82) and (4.3,-0.39) .. (0,0) .. controls (4.3,0.39) and (9.04,1.82) .. (14.21,4.28)   ;
\draw [color={rgb, 255:red, 208; green, 2; blue, 27 }  ,draw opacity=1 ][line width=1.5]    (462.36,292.29) -- (469.75,302.46) ;
\draw [shift={(471.51,304.89)}, rotate = 234.01] [color={rgb, 255:red, 208; green, 2; blue, 27 }  ,draw opacity=1 ][line width=1.5]    (14.21,-4.28) .. controls (9.04,-1.82) and (4.3,-0.39) .. (0,0) .. controls (4.3,0.39) and (9.04,1.82) .. (14.21,4.28)   ;
\draw [color={rgb, 255:red, 208; green, 2; blue, 27 }  ,draw opacity=1 ][line width=1.5]    (186.13,477.76) -- (187.8,469.45) ;
\draw [shift={(188.38,466.51)}, rotate = 101.32] [color={rgb, 255:red, 208; green, 2; blue, 27 }  ,draw opacity=1 ][line width=1.5]    (14.21,-4.28) .. controls (9.04,-1.82) and (4.3,-0.39) .. (0,0) .. controls (4.3,0.39) and (9.04,1.82) .. (14.21,4.28)   ;
\draw (146.4,434.7) node [anchor=north west][inner sep=0.75pt]   [align=left] {$\displaystyle D_{a_{3}}$};
\draw (102.85,420.53) node [anchor=north west][inner sep=0.75pt]   [align=left] {$\displaystyle a_{1}$};
\draw (121.6,510.6) node [anchor=north west][inner sep=0.75pt]   [align=left] {$\displaystyle a_{2}$};
\draw (200.35,485.55) node [anchor=north west][inner sep=0.75pt]   [align=left] {$\displaystyle a_{4}$};
\draw (486.55,486.7) node [anchor=north west][inner sep=0.75pt]   [align=left] {$\displaystyle D_{a_{4}}$};
\draw (442.4,420.33) node [anchor=north west][inner sep=0.75pt]   [align=left] {$\displaystyle a_{1}$};
\draw (539.55,420.6) node [anchor=north west][inner sep=0.75pt]   [align=left] {$\displaystyle a_{3}$};
\draw (449.5,485.55) node [anchor=north west][inner sep=0.75pt]   [align=left] {$\displaystyle a_{2}$};
\draw (494.55,326.7) node [anchor=north west][inner sep=0.75pt]   [align=left] {$\displaystyle D_{a_{2}}$};
\draw (446.8,260.53) node [anchor=north west][inner sep=0.75pt]   [align=left] {$\displaystyle a_{1}$};
\draw (515.55,260.2) node [anchor=north west][inner sep=0.75pt]   [align=left] {$\displaystyle a_{3}$};
\draw (538.3,325.95) node [anchor=north west][inner sep=0.75pt]   [align=left] {$\displaystyle a_{4}$};
\draw (137.75,268.82) node [anchor=north west][inner sep=0.75pt]   [align=left] {$\displaystyle D_{a_{1}}$};
\draw (100.25,325.23) node [anchor=north west][inner sep=0.75pt]   [align=left] {$\displaystyle a_{2}$};
\draw (186.75,253.73) node [anchor=north west][inner sep=0.75pt]   [align=left] {$\displaystyle a_{3}$};
\draw (209.5,324.48) node [anchor=north west][inner sep=0.75pt]   [align=left] {$\displaystyle a_{4}$};

\end{tikzpicture}

	\end{center}
 \caption{Construction of the primitive curves for the Kishino's doodle}
 \label{kishino2}
\end{figure} 

From Equation \ref{eq:rela} and Figure \ref{kishino2}, we have that 
\[
\alpha(K)=\left( \begin{array}{r}
      -1 \\
      1\\
      1\\
      -1
      
\end{array}\right).
\]

Also from Figure \ref{kishino2}, we have that 
\[
\left(\left\langle \widetilde{\varphi}_{a_i}, \widetilde{\varphi}_{a_j} \right\rangle \right)_{i,j=1,2,3,4}=\left( \begin{array}{rrrr}
     0&1 &0&0 \\
     -1&0 &0&0\\
     0&0&0&-1\\
     0&0&1&0
\end{array} \right).
\]
Besides, 
\[
\left(\left\langle \varphi_{a_i}, \varphi_{D} \right\rangle-\left\langle \varphi_{a_j}, \varphi_{D} \right\rangle \right)_{i,j=1,2,3,4}=\left( \begin{array}{rrrr}
     0&-2 &-2&0 \\
     2&0 &0&2\\
     2&0&0&2\\
     0&-2&-2&0
\end{array} \right).
\]
From Equation \ref{eq:rela}, $\left(\left\langle \varphi_{a_i}, \varphi_{a_j} \right\rangle \right)_{i,j=1,2,3,4}=\left(\left\langle \widetilde{\varphi}_{a_i}, \widetilde{\varphi}_{a_j} \right\rangle \right)_{i,j=1,2,3,4}+\left(\left\langle \varphi_{a_i}, \varphi_{D} \right\rangle-\left\langle \varphi_{a_j}, \varphi_{D} \right\rangle \right)_{i,j=1,2,3,4}$. Thus,

\[
\left(\left\langle \varphi_{a_i}, \varphi_{a_j} \right\rangle \right)_{i,j=1,2,3,4}=\left( \begin{array}{rrrr}
     0&1 &0&0 \\
     -1&0 &0&0\\
     0&0&0&-1\\
     0&0&1&0
\end{array} \right)+\left( \begin{array}{rrrr}
     0&-2 &-2&0 \\
     2&0 &0&2\\
     2&0&0&2\\
     0&-2&-2&0
\end{array} \right)=\left(\begin{array}{rrrr}
0 & -1&-2&0\\
1 & 0&0&2\\
2&0&0&1\\
0&-2&-1&0
\end{array} \right).
\]

In this way,

 \[
   \lambda((T,D))=\left(\begin{array}{rrrr|r}
0 & -1&-2&0&-1\\
1 & 0&0&2&1\\
2&0&0&1&1\\
0&-2&-1&0&-1
\end{array} \right).
   \]
We observe that such skew-symmetric matrix is irreducible, so the Kishino's doodle is non classical.
\end{example}

\section{Virtualization of doodles}\label{sec:virtual}

Let $(\Sigma, D)$ be a surface doodle diagram, and let $a_i$ be a crossing point of $D$. Now, we consider a regular neighborhood $\mathcal{U}_{a_i} \subset \Sigma$ of $a_i$ such that no other crossing points of $D$ are in $\mathcal{U}_{a_i}$, as shown in Figure \ref{virtuproc}.
    \begin{definition}
        A virtualization of the doodle diagram $(\Sigma,D)$ at the crossing point $a_i$ is represented by the doodle diagram $(\widehat{\Sigma},v_{a_i}(D))$. Here, $\widehat{\Sigma}$ is a surface obtained by attaching a $1$-handle to $\Sigma$ within the neighborhood $\mathcal{U}$ disjoint from $D$. The doodle diagram $v_{a_i}(D)$ on $\widehat{\Sigma}$ is gotten from $D$ as illustrated in the following figure: 
    \end{definition}

    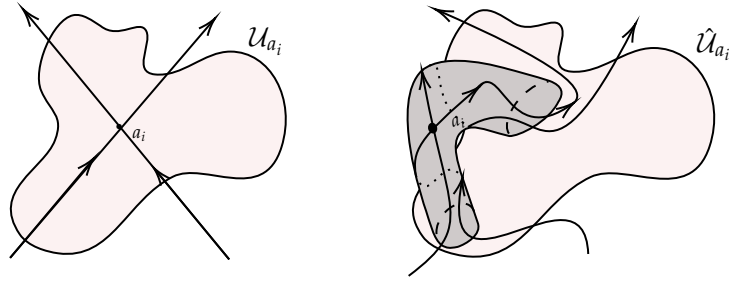
\begin{figure}[ht]
        \centering
      \tikzset{every picture/.style={line width=0.75pt}}

\begin{tikzpicture}[x=0.65pt,y=0.65pt,yscale=-1,xscale=1]

\draw [fill={rgb, 255:red, 252; green, 242; blue, 242 }  ,fill opacity=1 ][line width=0.75]    (96.45,59.83) .. controls (91.73,56.84) and (86.01,52.33) .. (80.36,53.56) .. controls (68.41,56.17) and (56.47,84.8) .. (61.79,97.42) .. controls (67.1,110.03) and (75.42,111.72) .. (69.83,126.4) .. controls (63.42,143.26) and (31.86,155.61) .. (49.41,181.22) .. controls (54.22,188.25) and (64.8,191.25) .. (71.69,192.19) .. controls (103.17,196.5) and (111.64,160.98) .. (138.54,147.55) .. controls (156.99,138.33) and (191.04,164.56) .. (201.68,130.32) .. controls (210.4,102.23) and (195.17,74.71) .. (171.96,74.71) .. controls (156.47,74.71) and (144.16,89.66) .. (128.02,88.02) .. controls (114.56,86.66) and (126.34,70.21) .. (123.07,62.96) .. controls (119.5,55.06) and (107.22,68.37) .. (96.45,59.83) -- cycle ;
\draw    (43.75,194.85) -- (91.5,137.99) ;
\draw [shift={(92.79,136.46)}, rotate = 130.03] [color={rgb, 255:red, 0; green, 0; blue, 0 }  ][line width=0.75]    (10.93,-3.29) .. controls (6.95,-1.4) and (3.31,-0.3) .. (0,0) .. controls (3.31,0.3) and (6.95,1.4) .. (10.93,3.29)   ;
\draw    (43.75,194.85) -- (161.95,56.58) ;
\draw [shift={(163.25,55.06)}, rotate = 130.53] [color={rgb, 255:red, 0; green, 0; blue, 0 }  ][line width=0.75]    (10.93,-3.29) .. controls (6.95,-1.4) and (3.31,-0.3) .. (0,0) .. controls (3.31,0.3) and (6.95,1.4) .. (10.93,3.29)   ;
\draw    (171.24,195.75) -- (128.44,143.8) ;
\draw [shift={(127.17,142.25)}, rotate = 50.52] [color={rgb, 255:red, 0; green, 0; blue, 0 }  ][line width=0.75]    (10.93,-3.29) .. controls (6.95,-1.4) and (3.31,-0.3) .. (0,0) .. controls (3.31,0.3) and (6.95,1.4) .. (10.93,3.29)   ;
\draw    (171.24,195.75) -- (52.21,51.19) ;
\draw [shift={(50.94,49.65)}, rotate = 50.53] [color={rgb, 255:red, 0; green, 0; blue, 0 }  ][line width=0.75]    (10.93,-3.29) .. controls (6.95,-1.4) and (3.31,-0.3) .. (0,0) .. controls (3.31,0.3) and (6.95,1.4) .. (10.93,3.29)   ;
\draw [fill={rgb, 255:red, 252; green, 242; blue, 242 }  ,fill opacity=1 ][line width=0.75]    (335.47,57.27) .. controls (330.29,54.19) and (324.03,49.53) .. (317.83,50.8) .. controls (304.74,53.49) and (291.65,83.03) .. (297.48,96.04) .. controls (303.31,109.05) and (312.42,110.79) .. (306.3,125.94) .. controls (299.27,143.32) and (264.69,156.06) .. (283.92,182.49) .. controls (289.19,189.74) and (300.78,192.83) .. (308.33,193.8) .. controls (342.83,198.24) and (352.11,161.61) .. (381.59,147.75) .. controls (401.81,138.24) and (439.12,165.3) .. (450.77,129.98) .. controls (460.33,101) and (443.65,72.62) .. (418.22,72.62) .. controls (401.24,72.62) and (387.75,88.04) .. (370.06,86.35) .. controls (355.31,84.94) and (368.22,67.97) .. (364.63,60.5) .. controls (360.72,52.35) and (347.27,66.07) .. (335.47,57.27) -- cycle ;
\draw [fill={rgb, 255:red, 207; green, 202; blue, 202 }  ,fill opacity=1 ][line width=0.75] [line join = round][line cap = round]   (295.09,187.66) .. controls (290.1,183.88) and (288.16,174.44) .. (285.84,167.6) .. controls (283.38,160.37) and (264.42,104.1) .. (283.37,85.74) .. controls (297.21,73.96) and (330.17,84.02) .. (333.12,84.78) .. controls (337.49,85.91) and (349.36,89.09) .. (357.76,91.84) .. controls (365.21,94.64) and (366.21,98.99) .. (362.73,104.08) .. controls (356.99,115.67) and (339.91,124.11) .. (334.27,123.91) .. controls (318.06,120.01) and (311.53,115.59) .. (305.41,122.48) .. controls (300.11,128.75) and (305.34,141.89) .. (305.76,144.49) .. controls (311.7,161.94) and (310.01,157.14) .. (315.94,174.59) .. controls (318.23,181.34) and (309.12,186.83) .. (304.15,188.46) .. controls (299.03,189.45) and (298.08,189.1) .. (295.09,187.66) -- cycle ;
\draw  [draw opacity=0][dash pattern={on 4.5pt off 4.5pt}][line width=0.75]  (291.89,181) .. controls (291.46,176.83) and (293.48,171.64) .. (297.38,167.76) .. controls (302.72,162.45) and (309.58,161.59) .. (312.7,165.84) .. controls (313.67,167.16) and (314.17,168.82) .. (314.23,170.64) -- (303.03,175.45) -- cycle ; \draw  [dash pattern={on 4.5pt off 4.5pt}][line width=0.75]  (291.89,181) .. controls (291.46,176.83) and (293.48,171.64) .. (297.38,167.76) .. controls (302.72,162.45) and (309.58,161.59) .. (312.7,165.84) .. controls (313.67,167.16) and (314.17,168.82) .. (314.23,170.64) ;  
\draw  [draw opacity=0][dash pattern={on 4.5pt off 4.5pt}] (334.05,122.48) .. controls (330.78,117.81) and (334.35,107.88) .. (342.08,100.19) .. controls (347.17,95.13) and (352.81,92.29) .. (357.05,92.22) -- (348.23,108.55) -- cycle ; \draw  [dash pattern={on 4.5pt off 4.5pt}] (334.05,122.48) .. controls (330.78,117.81) and (334.35,107.88) .. (342.08,100.19) .. controls (347.17,95.13) and (352.81,92.29) .. (357.05,92.22) ;  
\draw  [fill={rgb, 255:red, 0; green, 0; blue, 0 }  ,fill opacity=1 ] (287.65,119.9) .. controls (287.24,118.53) and (287.78,117.15) .. (288.87,116.82) .. controls (289.96,116.49) and (291.17,117.33) .. (291.59,118.7) .. controls (292,120.07) and (291.46,121.45) .. (290.37,121.78) .. controls (289.29,122.11) and (288.07,121.27) .. (287.65,119.9) -- cycle ;
\draw  [dash pattern={on 0.84pt off 2.51pt}]  (279.83,146.59) .. controls (287.17,168.25) and (296.83,128.59) .. (307.5,149.92) ;
\draw    (279.83,146.59) .. controls (277.85,133.06) and (280.45,125.73) .. (316.08,95.83) ;
\draw [shift={(317.17,94.92)}, rotate = 140.09] [color={rgb, 255:red, 0; green, 0; blue, 0 }  ][line width=0.75]    (10.93,-3.29) .. controls (6.95,-1.4) and (3.31,-0.3) .. (0,0) .. controls (3.31,0.3) and (6.95,1.4) .. (10.93,3.29)   ;
\draw    (380.17,192.59) .. controls (377.85,134.55) and (291.04,231.28) .. (306.92,148.84) ;
\draw [shift={(307.17,147.59)}, rotate = 101.35] [color={rgb, 255:red, 0; green, 0; blue, 0 }  ][line width=0.75]    (10.93,-3.29) .. controls (6.95,-1.4) and (3.31,-0.3) .. (0,0) .. controls (3.31,0.3) and (6.95,1.4) .. (10.93,3.29)   ;
\draw    (275.67,205.34) .. controls (315.47,175.49) and (297.68,168.33) .. (283.58,86.97) ;
\draw [shift={(283.37,85.74)}, rotate = 80.28] [color={rgb, 255:red, 0; green, 0; blue, 0 }  ][line width=0.75]    (10.93,-3.29) .. controls (6.95,-1.4) and (3.31,-0.3) .. (0,0) .. controls (3.31,0.3) and (6.95,1.4) .. (10.93,3.29)   ;
\draw  [dash pattern={on 0.84pt off 2.51pt}]  (283.37,85.74) .. controls (301.17,71.59) and (283.83,105.59) .. (312.17,118.92) ;
\draw    (310.17,118.59) .. controls (354.28,93.05) and (351.2,169.48) .. (406.99,59.92) ;
\draw [shift={(407.83,58.25)}, rotate = 116.84] [color={rgb, 255:red, 0; green, 0; blue, 0 }  ][line width=0.75]    (10.93,-3.29) .. controls (6.95,-1.4) and (3.31,-0.3) .. (0,0) .. controls (3.31,0.3) and (6.95,1.4) .. (10.93,3.29)   ;
\draw    (314.5,97.25) .. controls (330.34,76.13) and (332.13,133.1) .. (370.98,105.13) ;
\draw [shift={(372.17,104.25)}, rotate = 143.13] [color={rgb, 255:red, 0; green, 0; blue, 0 }  ][line width=0.75]    (10.93,-3.29) .. controls (6.95,-1.4) and (3.31,-0.3) .. (0,0) .. controls (3.31,0.3) and (6.95,1.4) .. (10.93,3.29)   ;
\draw    (369.5,106.25) .. controls (384.43,97.96) and (360.74,80.43) .. (291.87,50.37) ;
\draw [shift={(290.83,49.92)}, rotate = 23.53] [color={rgb, 255:red, 0; green, 0; blue, 0 }  ][line width=0.75]    (10.93,-3.29) .. controls (6.95,-1.4) and (3.31,-0.3) .. (0,0) .. controls (3.31,0.3) and (6.95,1.4) .. (10.93,3.29)   ;

\draw (297.57,112.74) node [anchor=north west][inner sep=0.75pt]  [font=\scriptsize,rotate=-343.14] [align=left] {$\displaystyle a_{i}$};
\draw (181.04,58.96) node [anchor=north west][inner sep=0.75pt]   [align=left] {$\displaystyle \mathcal{U}_{a_{i}} $};
\draw (103.36,115.19) node [anchor=north west][inner sep=0.75pt]  [font=\tiny] [align=left] {$\displaystyle \bullet $};
\draw (113.13,118.03) node [anchor=north west][inner sep=0.75pt]  [font=\scriptsize] [align=left] {$\displaystyle a_{i}$};
\draw (441.79,59.24) node [anchor=north west][inner sep=0.75pt]   [align=left] {$\displaystyle \hat{\mathcal{U}}_{a_{i}}{} \ $};
\end{tikzpicture}
        \caption{Virtualization process at the crossing point $a_i$}
        \label{virtuproc}
    \end{figure}

We do not lose generality by assuming that the virtualization occurs at the crossing point $a_n$. To simplify the notation, we use $V(D)$ instead of $v_{a_n}(D)$. It is not hard to prove that, for every $a_j\neq a_i$, $V(D_{a_j})=V(D)_{a_j}$.

\begin{proposition}\label{comple}
    Let $(\Sigma,D)$ be a one-component doodle diagram and let  $(\widehat{\Sigma},D)$ be the doodle diagram obtained by attaching a $1$-handle inside of the neighborhood $\mathcal{U}_{a_n}$ disjoint of the curve $D$ as described in Figure \ref{imm}. Then, $(\Sigma,D)$ is stably $R$-equivalent to $(\widehat{\Sigma},D)$. Therefore, $(\Sigma,D_{a_k})$ is stably $R$-equivalent to $(\widehat{\Sigma},D_{a_k})$, for all $k=1,\ldots,n$. Besides,
    \begin{enumerate}
    \item $D\cap \left( \widehat{\Sigma}\setminus \widehat{U}_{a_n}\right)=V(D)\cap \left( \widehat{\Sigma}\setminus \widehat{U}_{a_n}\right)$.
        \item For all $a_j \neq a_n$, $D_{a_j}\cap \left( \widehat{\Sigma}\setminus \widehat{U}_{a_n}\right)=V(D)_{a_j}\cap \left( \widehat{\Sigma}\setminus \widehat{U}_{a_n}\right).$ Moreover, if $D_{a_j}$ does not travel the crossing point $a_i$, then $D_{a_j}=V(D)_{a_j}$.
        \item $\widetilde{D}_{a_n}\cap \left( \widehat{\Sigma}\setminus \widehat{U}_{a_n}\right)=V(D)_{a_n}\cap \left( \widehat{\Sigma}\setminus \widehat{U}_{a_n}\right).$
        \item $(\widehat{\Sigma},V(D)_{a_n})$ is stably R-equivalent to $(\widehat{\Sigma},\widetilde{D}_{a_n})$ and  $(\widehat{\Sigma},\widetilde{V(D)}_{a_n})$ is stably R-equivalent to $(\widehat{\Sigma},D_{a_n})$.
        
    \end{enumerate}

    \begin{figure}[ht]
        \centering
        \tikzset{every picture/.style={line width=0.75pt}} 

\begin{tikzpicture}[x=0.65pt,y=0.65pt,yscale=-1,xscale=1]

\draw [fill={rgb, 255:red, 252; green, 242; blue, 242 }  ,fill opacity=1 ][line width=0.75]    (108.45,308.83) .. controls (103.73,305.84) and (98.01,301.33) .. (92.36,302.56) .. controls (80.41,305.17) and (68.47,333.8) .. (73.79,346.42) .. controls (79.1,359.03) and (87.42,360.72) .. (81.83,375.4) .. controls (75.42,392.26) and (43.86,404.61) .. (61.41,430.22) .. controls (66.22,437.25) and (76.8,440.25) .. (83.69,441.19) .. controls (115.17,445.5) and (123.64,409.98) .. (150.54,396.55) .. controls (168.99,387.33) and (203.04,413.56) .. (213.68,379.32) .. controls (222.4,351.23) and (207.17,323.71) .. (183.96,323.71) .. controls (168.47,323.71) and (156.16,338.66) .. (140.02,337.02) .. controls (126.56,335.66) and (138.34,319.21) .. (135.07,311.96) .. controls (131.5,304.06) and (119.22,317.37) .. (108.45,308.83) -- cycle ;
\draw    (55.75,443.85) -- (103.5,386.99) ;
\draw [shift={(104.79,385.46)}, rotate = 130.03] [color={rgb, 255:red, 0; green, 0; blue, 0 }  ][line width=0.75]    (10.93,-3.29) .. controls (6.95,-1.4) and (3.31,-0.3) .. (0,0) .. controls (3.31,0.3) and (6.95,1.4) .. (10.93,3.29)   ;
\draw    (55.75,443.85) -- (173.95,305.58) ;
\draw [shift={(175.25,304.06)}, rotate = 130.53] [color={rgb, 255:red, 0; green, 0; blue, 0 }  ][line width=0.75]    (10.93,-3.29) .. controls (6.95,-1.4) and (3.31,-0.3) .. (0,0) .. controls (3.31,0.3) and (6.95,1.4) .. (10.93,3.29)   ;
\draw    (183.24,444.75) -- (140.44,392.8) ;
\draw [shift={(139.17,391.25)}, rotate = 50.52] [color={rgb, 255:red, 0; green, 0; blue, 0 }  ][line width=0.75]    (10.93,-3.29) .. controls (6.95,-1.4) and (3.31,-0.3) .. (0,0) .. controls (3.31,0.3) and (6.95,1.4) .. (10.93,3.29)   ;
\draw    (183.24,444.75) -- (64.21,300.19) ;
\draw [shift={(62.94,298.65)}, rotate = 50.53] [color={rgb, 255:red, 0; green, 0; blue, 0 }  ][line width=0.75]    (10.93,-3.29) .. controls (6.95,-1.4) and (3.31,-0.3) .. (0,0) .. controls (3.31,0.3) and (6.95,1.4) .. (10.93,3.29)   ;
\draw [fill={rgb, 255:red, 252; green, 242; blue, 242 }  ,fill opacity=1 ][line width=0.75]    (347.47,306.27) .. controls (342.29,303.19) and (336.03,298.53) .. (329.83,299.8) .. controls (316.74,302.49) and (303.65,332.03) .. (309.48,345.04) .. controls (315.31,358.05) and (324.42,359.79) .. (318.3,374.94) .. controls (311.27,392.32) and (276.69,405.06) .. (295.92,431.49) .. controls (301.19,438.74) and (312.78,441.83) .. (320.33,442.8) .. controls (354.83,447.24) and (364.11,410.61) .. (393.59,396.75) .. controls (413.81,387.24) and (451.12,414.3) .. (462.77,378.98) .. controls (472.33,350) and (455.65,321.62) .. (430.22,321.62) .. controls (413.24,321.62) and (399.75,337.04) .. (382.06,335.35) .. controls (367.31,333.94) and (380.22,316.97) .. (376.63,309.5) .. controls (372.72,301.35) and (359.27,315.07) .. (347.47,306.27) -- cycle ;
\draw [fill={rgb, 255:red, 207; green, 202; blue, 202 }  ,fill opacity=1 ][line width=0.75] [line join = round][line cap = round]   (307.09,436.66) .. controls (302.1,432.88) and (300.16,423.44) .. (297.84,416.6) .. controls (295.38,409.37) and (276.42,353.1) .. (295.37,334.74) .. controls (309.21,322.96) and (342.17,333.02) .. (345.12,333.78) .. controls (349.49,334.91) and (361.36,338.09) .. (369.76,340.84) .. controls (377.21,343.64) and (378.21,347.99) .. (374.73,353.08) .. controls (368.99,364.67) and (351.91,373.11) .. (346.27,372.91) .. controls (330.06,369.01) and (323.53,364.59) .. (317.41,371.48) .. controls (312.11,377.75) and (317.34,390.89) .. (317.76,393.49) .. controls (323.7,410.94) and (322.01,406.14) .. (327.94,423.59) .. controls (330.23,430.34) and (321.12,435.83) .. (316.15,437.46) .. controls (311.03,438.45) and (310.08,438.1) .. (307.09,436.66) -- cycle ;
\draw  [draw opacity=0][dash pattern={on 4.5pt off 4.5pt}][line width=0.75]  (303.89,430) .. controls (303.46,425.83) and (305.48,420.64) .. (309.38,416.76) .. controls (314.72,411.45) and (321.58,410.59) .. (324.7,414.84) .. controls (325.67,416.16) and (326.17,417.82) .. (326.23,419.64) -- (315.03,424.45) -- cycle ; \draw  [dash pattern={on 4.5pt off 4.5pt}][line width=0.75]  (303.89,430) .. controls (303.46,425.83) and (305.48,420.64) .. (309.38,416.76) .. controls (314.72,411.45) and (321.58,410.59) .. (324.7,414.84) .. controls (325.67,416.16) and (326.17,417.82) .. (326.23,419.64) ;  
\draw  [draw opacity=0][dash pattern={on 4.5pt off 4.5pt}] (346.05,371.48) .. controls (342.78,366.81) and (346.35,356.88) .. (354.08,349.19) .. controls (359.17,344.13) and (364.81,341.29) .. (369.05,341.22) -- (360.23,357.55) -- cycle ; \draw  [dash pattern={on 4.5pt off 4.5pt}] (346.05,371.48) .. controls (342.78,366.81) and (346.35,356.88) .. (354.08,349.19) .. controls (359.17,344.13) and (364.81,341.29) .. (369.05,341.22) ;  
\draw  [fill={rgb, 255:red, 0; green, 0; blue, 0 }  ,fill opacity=1 ] (299.65,368.9) .. controls (299.24,367.53) and (299.78,366.15) .. (300.87,365.82) .. controls (301.96,365.49) and (303.17,366.33) .. (303.59,367.7) .. controls (304,369.07) and (303.46,370.45) .. (302.37,370.78) .. controls (301.29,371.11) and (300.07,370.27) .. (299.65,368.9) -- cycle ;
\draw    (318.13,451.89) .. controls (315.81,393.85) and (255,375.57) .. (337.14,355.69) ;
\draw [shift={(338.38,355.39)}, rotate = 166.61] [color={rgb, 255:red, 0; green, 0; blue, 0 }  ][line width=0.75]    (10.93,-3.29) .. controls (6.95,-1.4) and (3.31,-0.3) .. (0,0) .. controls (3.31,0.3) and (6.95,1.4) .. (10.93,3.29)   ;
\draw    (388.63,436.39) .. controls (354.88,391.89) and (350.8,440.2) .. (331.88,433.89) .. controls (312.97,427.58) and (306.89,392.8) .. (301.62,368.3) .. controls (296.43,344.17) and (308.27,333.94) .. (345.27,339.85) ;
\draw [shift={(346.98,340.13)}, rotate = 189.62] [color={rgb, 255:red, 0; green, 0; blue, 0 }  ][line width=0.75]    (10.93,-3.29) .. controls (6.95,-1.4) and (3.31,-0.3) .. (0,0) .. controls (3.31,0.3) and (6.95,1.4) .. (10.93,3.29)   ;
\draw    (336.92,356.09) .. controls (381.03,330.55) and (377.95,406.98) .. (433.74,297.42) ;
\draw [shift={(434.58,295.75)}, rotate = 116.84] [color={rgb, 255:red, 0; green, 0; blue, 0 }  ][line width=0.75]    (10.93,-3.29) .. controls (6.95,-1.4) and (3.31,-0.3) .. (0,0) .. controls (3.31,0.3) and (6.95,1.4) .. (10.93,3.29)   ;
\draw    (343.88,339.76) .. controls (411.54,359.16) and (372.89,329.55) .. (303.88,299.38) ;
\draw [shift={(302.83,298.92)}, rotate = 23.53] [color={rgb, 255:red, 0; green, 0; blue, 0 }  ][line width=0.75]    (10.93,-3.29) .. controls (6.95,-1.4) and (3.31,-0.3) .. (0,0) .. controls (3.31,0.3) and (6.95,1.4) .. (10.93,3.29)   ;
\draw    (306.88,393.39) -- (303.81,379.09) ;
\draw [shift={(303.38,377.14)}, rotate = 77.85] [color={rgb, 255:red, 0; green, 0; blue, 0 }  ][line width=0.75]    (10.93,-3.29) .. controls (6.95,-1.4) and (3.31,-0.3) .. (0,0) .. controls (3.31,0.3) and (6.95,1.4) .. (10.93,3.29)   ;
\draw    (302.63,403.39) -- (298.01,393.93) ;
\draw [shift={(297.13,392.14)}, rotate = 63.95] [color={rgb, 255:red, 0; green, 0; blue, 0 }  ][line width=0.75]    (10.93,-3.29) .. controls (6.95,-1.4) and (3.31,-0.3) .. (0,0) .. controls (3.31,0.3) and (6.95,1.4) .. (10.93,3.29)   ;

\draw (453.79,308.24) node [anchor=north west][inner sep=0.75pt]   [align=left] {$\displaystyle \hat{\mathcal{U}}_{a_{n}}{} \ $};
\draw (128.13,362.78) node [anchor=north west][inner sep=0.75pt]  [font=\scriptsize] [align=left] {$\displaystyle a_{n}$};
\draw (115.36,365.19) node [anchor=north west][inner sep=0.75pt]  [font=\tiny] [align=left] {$\displaystyle \bullet $};
\draw (193.04,307.96) node [anchor=north west][inner sep=0.75pt]   [align=left] {$\displaystyle \mathcal{U}_{a_{n}}{} \ $};
\draw (301.57,369.74) node [anchor=north west][inner sep=0.75pt]  [font=\scriptsize,rotate=-343.14] [align=left] {$\displaystyle a_{n}$};
\draw (399,409.63) node [anchor=north west][inner sep=0.75pt]   [align=left] {$\displaystyle (\hat{\Sigma } ,D)$};
\draw (120.25,425) node [anchor=north west][inner sep=0.75pt]   [align=left] {$\displaystyle ( \Sigma ,D)$};

\end{tikzpicture}

        \caption{Immersion of $D$ into the surface $\widehat{\Sigma}$.}
        \label{imm}
    \end{figure}
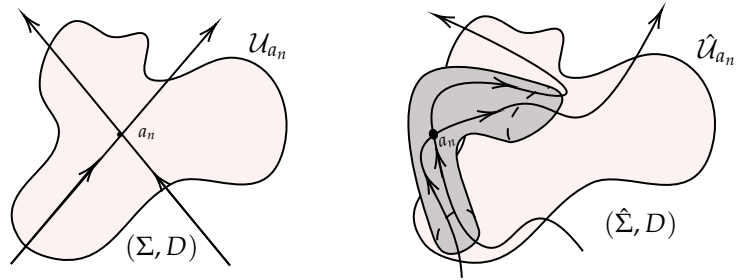
\end{proposition}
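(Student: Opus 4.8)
The plan is to obtain the main stable $R$-equivalence directly from Theorem~\ref{handletheorem}, and then to read off the four structural assertions (a)--(d) from the local picture of the virtualization drawn in Figures~\ref{virtuproc} and~\ref{imm}.

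First I would observe that $\widehat{\Sigma}$ is obtained from $\Sigma$ by attaching a single $1$-handle inside $\mathcal{U}_{a_n}$ disjoint from $D$, so in the language of Definition~\ref{1hadleaddition} this is exactly a surgery of type $h^{+}$ applied to $(\Sigma,D)$. Theorem~\ref{handletheorem} then gives $(\widehat{\Sigma},D)=(h^{+}(\Sigma),D)\sim(\Sigma,D)$, and since stable equivalence is one of the moves generating stable $R$-equivalence, we conclude $(\Sigma,D)\stackrel{R}{\sim}(\widehat{\Sigma},D)$. For the ``therefore'' clause, each primitive curve $D_{a_k}$ is traced along $D$ and hence also misses the attached handle; applying the very same $h^{+}$ surgery to $(\Sigma,D_{a_k})$ yields $(\Sigma,D_{a_k})\stackrel{R}{\sim}(\widehat{\Sigma},D_{a_k})$ for every $k$.

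The heart of the argument is the bookkeeping for (a)--(c), which I would carry out by tracking the four local strands through Figure~\ref{virtuproc}. The virtualization alters the diagram only within the enlarged neighborhood $\widehat{U}_{a_n}$, so outside $\widehat{U}_{a_n}$ the sets $D$ and $V(D)$ literally coincide, which is (a); assertion (b) follows at once, because the primitive curves of $V(D)$ are obtained by following $V(D)$ along arcs that agree with those of $D$ off $\widehat{U}_{a_n}$, and if $D_{a_j}$ never visits the virtualized crossing $a_n$ it never enters the modified region, giving the identity $D_{a_j}=V(D)_{a_j}$ already recorded in the remark $V(D_{a_j})=V(D)_{a_j}$. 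Claim (c) is the subtle one: routing the two strands over the handle reverses the right/left turning convention at $a_n$, so the \emph{left}-turn curve $V(D)_{a_n}$ of the virtualized diagram follows, outside $\widehat{U}_{a_n}$, precisely the arcs of the \emph{right}-turn curve $\widetilde{D}_{a_n}$ of the original diagram; I would confirm this by checking each of the local smoothing patterns against the rerouting in the figure.

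Finally, for (d) I would combine (c) with a purely local move inside $\widehat{U}_{a_n}$. Since $V(D)_{a_n}$ and $\widetilde{D}_{a_n}$ agree on $\widehat{\Sigma}\setminus\widehat{U}_{a_n}$, they differ only by a pair of arcs sharing the same endpoints inside $\widehat{U}_{a_n}$, one passing over the handle and the other through the location of $a_n$; these arcs cobound a monogon or bigon, so a single monogon/bigon move (Figure~\ref{f4}) carries one closed curve to the other and establishes $(\widehat{\Sigma},V(D)_{a_n})\stackrel{R}{\sim}(\widehat{\Sigma},\widetilde{D}_{a_n})$, while the reversed turning convention gives the symmetric statement $(\widehat{\Sigma},\widetilde{V(D)}_{a_n})\stackrel{R}{\sim}(\widehat{\Sigma},D_{a_n})$ in the same way. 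The main obstacle I anticipate is the case analysis underlying (c)--(d): one must verify, in every local configuration of the two strands at $a_n$, that passing over the handle swaps $D_{a_n}$ with $\widetilde{D}_{a_n}$ up to R-moves rather than producing a genuinely different curve, and this is where the careful reading of Figures~\ref{virtuproc} and~\ref{imm} is indispensable.
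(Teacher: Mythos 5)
Your derivation of the main equivalence and of parts (a)--(c) is correct and is essentially the paper's own (very terse) argument: the paper likewise treats the handle attachment as an $h^{+}$ surgery in a region disjoint from $D$, so Theorem \ref{handletheorem} gives the stable $R$-equivalence for $D$ and for each $D_{a_k}$ alike; it disposes of (a) and (b) by the same locality observation, and it reads (c) off Figure \ref{primivirt}, which is what your strand-tracking does in more detail.

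The gap is in (d). You assert that the two arcs in which $V(D)_{a_n}$ and $\widetilde{D}_{a_n}$ differ inside $\widehat{\mathcal{U}}_{a_n}$ ``cobound a monogon or bigon'', so that a single monogon/bigon move carries one closed curve to the other. That step fails. Exactly one of those arcs runs over the attached $1$-handle, while the other lies in $\Sigma$ (recall $\widetilde{D}_{a_n}\subset D$, and the handle is attached away from $D$); hence their union is a closed curve meeting the meridian of the handle transversally in a single point. In particular the two arcs cobound no embedded disk, so no monogon or bigon is present --- and in any case those moves act at crossings of a single diagram, whereas here the region is crossing-free and the two arcs belong to different curves. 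More decisively, the class $[V(D)_{a_n}]-[\widetilde{D}_{a_n}]\in H_{1}(\widehat{\Sigma},\mathbf{Z})$ has intersection number $\pm 1$ with the class of the meridian, hence is nonzero; since isotopies and monogon/bigon moves preserve the homology class of a diagram, \emph{no} sequence of such moves inside the fixed surface $\widehat{\Sigma}$ can take $V(D)_{a_n}$ to $\widetilde{D}_{a_n}$. Part (d) genuinely needs the stable (surgery) part of stable $R$-equivalence. A correct route: the two curves coincide outside $\widehat{\mathcal{U}}_{a_n}$ and are embedded arcs inside, so they have identical self-intersection combinatorics; therefore there is an orientation-preserving homeomorphism of regular neighborhoods carrying one curve to the other, and chaining the geotopies through these regular neighborhoods (exactly as in the proof of Theorem \ref{defequiv}) gives $(\widehat{\Sigma},V(D)_{a_n})\sim(\widehat{\Sigma},\widetilde{D}_{a_n})$; the same argument handles the pair $\widetilde{V(D)}_{a_n}$ and $D_{a_n}$. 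This stabilization argument is what the paper's appeal to Figure \ref{primivirt} amounts to, and it is the idea your proposal is missing.
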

\begin{proof}
    The proof of $(a)$ and $(b)$ comes from the fact that the virtualization of a doodle diagram only changes the part of the diagram inside the neighborhood $\mathcal{U}_{a_n}$, while the rest of the diagram remains unchanged. The proof of the literals $(c)$ and $(d)$ are gotten from  Figure \ref{primivirt}. 

    \begin{figure}[ht]
        \centering
      \tikzset{every picture/.style={line width=0.75pt}} 

\begin{tikzpicture}[x=0.65pt,y=0.65pt,yscale=-1,xscale=1]

\draw [fill={rgb, 255:red, 252; green, 242; blue, 242 }  ,fill opacity=1 ][line width=0.75]    (101.22,66.38) .. controls (96.04,63.3) and (89.78,58.64) .. (83.58,59.92) .. controls (70.49,62.61) and (57.4,92.14) .. (63.23,105.16) .. controls (69.06,118.17) and (78.17,119.91) .. (72.05,135.05) .. controls (65.02,152.44) and (30.44,165.18) .. (49.67,191.6) .. controls (54.94,198.85) and (66.53,201.94) .. (74.08,202.91) .. controls (108.58,207.36) and (117.86,170.73) .. (147.34,156.86) .. controls (167.56,147.36) and (204.87,174.42) .. (216.52,139.09) .. controls (226.08,110.12) and (209.4,81.73) .. (183.97,81.73) .. controls (166.99,81.73) and (153.5,97.15) .. (135.81,95.47) .. controls (121.06,94.06) and (133.97,77.09) .. (130.38,69.61) .. controls (126.47,61.47) and (113.02,75.19) .. (101.22,66.38) -- cycle ;
\draw [fill={rgb, 255:red, 207; green, 202; blue, 202 }  ,fill opacity=1 ][line width=0.75] [line join = round][line cap = round]   (60.13,197.16) .. controls (55.14,193.39) and (53.2,183.94) .. (50.88,177.1) .. controls (48.42,169.87) and (29.46,113.6) .. (48.4,95.24) .. controls (62.25,83.46) and (95.21,93.53) .. (98.15,94.28) .. controls (102.53,95.41) and (114.39,98.59) .. (122.8,101.34) .. controls (130.25,104.14) and (131.25,108.49) .. (127.76,113.58) .. controls (122.03,125.18) and (104.95,133.61) .. (99.31,133.41) .. controls (83.1,129.51) and (76.56,125.09) .. (70.45,131.98) .. controls (65.15,138.25) and (70.38,151.39) .. (70.8,153.99) .. controls (76.74,171.44) and (75.04,166.64) .. (80.98,184.09) .. controls (83.27,190.84) and (74.16,196.33) .. (69.19,197.96) .. controls (64.07,198.95) and (63.12,198.6) .. (60.13,197.16) -- cycle ;
\draw  [draw opacity=0][dash pattern={on 4.5pt off 4.5pt}][line width=0.75]  (57.64,190.12) .. controls (57.21,185.95) and (59.23,180.76) .. (63.13,176.87) .. controls (68.47,171.56) and (75.33,170.7) .. (78.45,174.96) .. controls (79.42,176.28) and (79.92,177.94) .. (79.98,179.76) -- (68.78,184.57) -- cycle ; \draw  [dash pattern={on 4.5pt off 4.5pt}][line width=0.75]  (57.64,190.12) .. controls (57.21,185.95) and (59.23,180.76) .. (63.13,176.87) .. controls (68.47,171.56) and (75.33,170.7) .. (78.45,174.96) .. controls (79.42,176.28) and (79.92,177.94) .. (79.98,179.76) ;  
\draw  [draw opacity=0][dash pattern={on 4.5pt off 4.5pt}] (99.8,131.59) .. controls (96.53,126.93) and (100.1,116.99) .. (107.83,109.3) .. controls (112.92,104.24) and (118.56,101.41) .. (122.8,101.34) -- (113.98,117.67) -- cycle ; \draw  [dash pattern={on 4.5pt off 4.5pt}] (99.8,131.59) .. controls (96.53,126.93) and (100.1,116.99) .. (107.83,109.3) .. controls (112.92,104.24) and (118.56,101.41) .. (122.8,101.34) ;  
\draw  [fill={rgb, 255:red, 0; green, 0; blue, 0 }  ,fill opacity=1 ] (53.4,129.01) .. controls (52.99,127.64) and (53.53,126.26) .. (54.62,125.93) .. controls (55.71,125.6) and (56.92,126.45) .. (57.34,127.82) .. controls (57.75,129.19) and (57.21,130.57) .. (56.12,130.9) .. controls (55.04,131.23) and (53.82,130.38) .. (53.4,129.01) -- cycle ;
\draw    (71.88,212) .. controls (69.56,153.96) and (8.75,135.68) .. (90.89,115.8) ;
\draw [shift={(92.13,115.5)}, rotate = 166.61] [color={rgb, 255:red, 0; green, 0; blue, 0 }  ][line width=0.75]    (10.93,-3.29) .. controls (6.95,-1.4) and (3.31,-0.3) .. (0,0) .. controls (3.31,0.3) and (6.95,1.4) .. (10.93,3.29)   ;
\draw    (142.38,196.5) .. controls (108.63,152) and (104.55,200.31) .. (85.63,194) .. controls (66.72,187.69) and (60.64,152.91) .. (55.37,128.42) .. controls (50.18,104.29) and (62.02,94.06) .. (99.02,99.97) ;
\draw [shift={(100.73,100.25)}, rotate = 189.62] [color={rgb, 255:red, 0; green, 0; blue, 0 }  ][line width=0.75]    (10.93,-3.29) .. controls (6.95,-1.4) and (3.31,-0.3) .. (0,0) .. controls (3.31,0.3) and (6.95,1.4) .. (10.93,3.29)   ;
\draw    (90.67,116.2) .. controls (134.78,90.66) and (131.7,167.1) .. (187.49,57.53) ;
\draw [shift={(188.33,55.87)}, rotate = 116.84] [color={rgb, 255:red, 0; green, 0; blue, 0 }  ][line width=0.75]    (10.93,-3.29) .. controls (6.95,-1.4) and (3.31,-0.3) .. (0,0) .. controls (3.31,0.3) and (6.95,1.4) .. (10.93,3.29)   ;
\draw    (97.63,99.88) .. controls (165.29,119.28) and (126.64,89.67) .. (57.63,59.49) ;
\draw [shift={(56.58,59.04)}, rotate = 23.53] [color={rgb, 255:red, 0; green, 0; blue, 0 }  ][line width=0.75]    (10.93,-3.29) .. controls (6.95,-1.4) and (3.31,-0.3) .. (0,0) .. controls (3.31,0.3) and (6.95,1.4) .. (10.93,3.29)   ;
\draw    (60.63,153.5) -- (57.56,139.21) ;
\draw [shift={(57.13,137.25)}, rotate = 77.85] [color={rgb, 255:red, 0; green, 0; blue, 0 }  ][line width=0.75]    (10.93,-3.29) .. controls (6.95,-1.4) and (3.31,-0.3) .. (0,0) .. controls (3.31,0.3) and (6.95,1.4) .. (10.93,3.29)   ;
\draw    (56.38,163.5) -- (51.76,154.05) ;
\draw [shift={(50.88,152.25)}, rotate = 63.95] [color={rgb, 255:red, 0; green, 0; blue, 0 }  ][line width=0.75]    (10.93,-3.29) .. controls (6.95,-1.4) and (3.31,-0.3) .. (0,0) .. controls (3.31,0.3) and (6.95,1.4) .. (10.93,3.29)   ;
\draw [color={rgb, 255:red, 208; green, 2; blue, 27 }  ,draw opacity=1 ]   (67.15,231.75) .. controls (73.95,203.75) and (51.67,171.54) .. (46.33,146.27) .. controls (41,120.99) and (45.33,118.93) .. (47,110.93) .. controls (48.67,102.93) and (56.4,93.42) .. (69,92.93) .. controls (81.6,92.44) and (134.6,107.13) .. (130.33,102.6) .. controls (126.39,98.41) and (71.21,71.4) .. (60.64,66.1) ;
\draw [shift={(59,65.27)}, rotate = 28.71] [color={rgb, 255:red, 208; green, 2; blue, 27 }  ,draw opacity=1 ][line width=0.75]    (10.93,-3.29) .. controls (6.95,-1.4) and (3.31,-0.3) .. (0,0) .. controls (3.31,0.3) and (6.95,1.4) .. (10.93,3.29)   ;
\draw [color={rgb, 255:red, 208; green, 2; blue, 27 }  ,draw opacity=1 ]   (134.33,198.33) .. controls (135.45,190.51) and (128.97,183.67) .. (118.33,181.33) .. controls (107.69,179) and (99.27,198.21) .. (88.67,198.67) .. controls (78.06,199.12) and (75.14,193.13) .. (68.67,181.67) .. controls (62.19,170.21) and (39.18,130.99) .. (56.33,116.67) .. controls (73.48,102.34) and (89.31,108.03) .. (110.67,108.67) .. controls (132.02,109.3) and (136.69,121.21) .. (143.67,114.33) .. controls (150.33,107.76) and (175.28,67.84) .. (180.7,54.39) ;
\draw [shift={(181.33,52.67)}, rotate = 107.93] [color={rgb, 255:red, 208; green, 2; blue, 27 }  ,draw opacity=1 ][line width=0.75]    (10.93,-3.29) .. controls (6.95,-1.4) and (3.31,-0.3) .. (0,0) .. controls (3.31,0.3) and (6.95,1.4) .. (10.93,3.29)   ;
\draw [fill={rgb, 255:red, 252; green, 242; blue, 242 }  ,fill opacity=1 ][line width=0.75]    (362.13,55.27) .. controls (356.96,52.19) and (350.69,47.53) .. (344.5,48.8) .. controls (331.41,51.49) and (318.32,81.03) .. (324.15,94.04) .. controls (329.97,107.05) and (339.09,108.79) .. (332.97,123.94) .. controls (325.94,141.32) and (291.35,154.06) .. (310.58,180.49) .. controls (315.86,187.74) and (327.45,190.83) .. (335,191.8) .. controls (369.5,196.24) and (378.77,159.61) .. (408.25,145.75) .. controls (428.47,136.24) and (465.79,163.3) .. (477.44,127.98) .. controls (487,99) and (470.31,70.62) .. (444.88,70.62) .. controls (427.91,70.62) and (414.42,86.04) .. (396.72,84.35) .. controls (381.98,82.94) and (394.88,65.97) .. (391.3,58.5) .. controls (387.39,50.35) and (373.93,64.07) .. (362.13,55.27) -- cycle ;
\draw [fill={rgb, 255:red, 207; green, 202; blue, 202 }  ,fill opacity=1 ][line width=0.75] [line join = round][line cap = round]   (321.76,185.66) .. controls (316.77,181.88) and (314.83,172.44) .. (312.51,165.6) .. controls (310.05,158.37) and (291.08,102.1) .. (310.03,83.74) .. controls (323.88,71.96) and (356.83,82.02) .. (359.78,82.78) .. controls (364.16,83.91) and (376.02,87.09) .. (384.43,89.84) .. controls (391.87,92.64) and (392.88,96.99) .. (389.39,102.08) .. controls (383.66,113.67) and (366.58,122.11) .. (360.94,121.91) .. controls (344.72,118.01) and (338.19,113.59) .. (332.08,120.48) .. controls (326.78,126.75) and (332.01,139.89) .. (332.43,142.49) .. controls (338.37,159.94) and (336.67,155.14) .. (342.6,172.59) .. controls (344.9,179.34) and (335.79,184.83) .. (330.81,186.46) .. controls (325.7,187.45) and (324.75,187.1) .. (321.76,185.66) -- cycle ;
\draw  [draw opacity=0][dash pattern={on 4.5pt off 4.5pt}][line width=0.75]  (318.55,179) .. controls (318.13,174.83) and (320.14,169.64) .. (324.05,165.76) .. controls (329.39,160.45) and (336.25,159.59) .. (339.37,163.84) .. controls (340.34,165.16) and (340.83,166.82) .. (340.89,168.64) -- (329.7,173.45) -- cycle ; \draw  [dash pattern={on 4.5pt off 4.5pt}][line width=0.75]  (318.55,179) .. controls (318.13,174.83) and (320.14,169.64) .. (324.05,165.76) .. controls (329.39,160.45) and (336.25,159.59) .. (339.37,163.84) .. controls (340.34,165.16) and (340.83,166.82) .. (340.89,168.64) ;  
\draw  [draw opacity=0][dash pattern={on 4.5pt off 4.5pt}] (360.72,120.48) .. controls (357.44,115.81) and (361.01,105.88) .. (368.75,98.19) .. controls (373.84,93.13) and (379.48,90.29) .. (383.72,90.22) -- (374.89,106.55) -- cycle ; \draw  [dash pattern={on 4.5pt off 4.5pt}] (360.72,120.48) .. controls (357.44,115.81) and (361.01,105.88) .. (368.75,98.19) .. controls (373.84,93.13) and (379.48,90.29) .. (383.72,90.22) ;  
\draw  [fill={rgb, 255:red, 0; green, 0; blue, 0 }  ,fill opacity=1 ] (314.32,117.9) .. controls (313.9,116.53) and (314.45,115.15) .. (315.54,114.82) .. controls (316.62,114.49) and (317.84,115.33) .. (318.26,116.7) .. controls (318.67,118.07) and (318.13,119.45) .. (317.04,119.78) .. controls (315.95,120.11) and (314.73,119.27) .. (314.32,117.9) -- cycle ;
\draw  [dash pattern={on 0.84pt off 2.51pt}]  (306.5,144.59) .. controls (313.83,166.25) and (323.5,126.59) .. (334.17,147.92) ;
\draw    (306.5,144.59) .. controls (304.52,131.06) and (307.11,123.73) .. (342.74,93.83) ;
\draw [shift={(343.83,92.92)}, rotate = 140.09] [color={rgb, 255:red, 0; green, 0; blue, 0 }  ][line width=0.75]    (10.93,-3.29) .. controls (6.95,-1.4) and (3.31,-0.3) .. (0,0) .. controls (3.31,0.3) and (6.95,1.4) .. (10.93,3.29)   ;
\draw    (406.83,190.59) .. controls (404.51,132.55) and (317.71,229.28) .. (333.59,146.84) ;
\draw [shift={(333.83,145.59)}, rotate = 101.35] [color={rgb, 255:red, 0; green, 0; blue, 0 }  ][line width=0.75]    (10.93,-3.29) .. controls (6.95,-1.4) and (3.31,-0.3) .. (0,0) .. controls (3.31,0.3) and (6.95,1.4) .. (10.93,3.29)   ;
\draw    (302.33,203.34) .. controls (342.13,173.49) and (324.35,166.33) .. (310.24,84.97) ;
\draw [shift={(310.03,83.74)}, rotate = 80.28] [color={rgb, 255:red, 0; green, 0; blue, 0 }  ][line width=0.75]    (10.93,-3.29) .. controls (6.95,-1.4) and (3.31,-0.3) .. (0,0) .. controls (3.31,0.3) and (6.95,1.4) .. (10.93,3.29)   ;
\draw  [dash pattern={on 0.84pt off 2.51pt}]  (310.03,83.74) .. controls (327.83,69.59) and (310.5,103.59) .. (338.83,116.92) ;
\draw    (336.83,116.59) .. controls (380.95,91.05) and (377.87,167.48) .. (433.65,57.92) ;
\draw [shift={(434.5,56.25)}, rotate = 116.84] [color={rgb, 255:red, 0; green, 0; blue, 0 }  ][line width=0.75]    (10.93,-3.29) .. controls (6.95,-1.4) and (3.31,-0.3) .. (0,0) .. controls (3.31,0.3) and (6.95,1.4) .. (10.93,3.29)   ;
\draw    (341.17,95.25) .. controls (357.01,74.13) and (358.8,131.1) .. (397.64,103.13) ;
\draw [shift={(398.83,102.25)}, rotate = 143.13] [color={rgb, 255:red, 0; green, 0; blue, 0 }  ][line width=0.75]    (10.93,-3.29) .. controls (6.95,-1.4) and (3.31,-0.3) .. (0,0) .. controls (3.31,0.3) and (6.95,1.4) .. (10.93,3.29)   ;
\draw    (396.17,104.25) .. controls (411.09,95.96) and (387.41,78.43) .. (318.54,48.37) ;
\draw [shift={(317.5,47.92)}, rotate = 23.53] [color={rgb, 255:red, 0; green, 0; blue, 0 }  ][line width=0.75]    (10.93,-3.29) .. controls (6.95,-1.4) and (3.31,-0.3) .. (0,0) .. controls (3.31,0.3) and (6.95,1.4) .. (10.93,3.29)   ;

\draw [color={rgb, 255:red, 208; green, 2; blue, 27 }  ,draw opacity=1 ]   (405.23,194.59) .. controls (402.91,136.55) and (317.32,246.38) .. (330.25,139.9) ;
\draw [shift={(330.45,138.28)}, rotate = 97.31] [color={rgb, 255:red, 208; green, 2; blue, 27 }  ,draw opacity=1 ][line width=0.75]    (10.93,-3.29) .. controls (6.95,-1.4) and (3.31,-0.3) .. (0,0) .. controls (3.31,0.3) and (6.95,1.4) .. (10.93,3.29)   ;
\draw [color={rgb, 255:red, 208; green, 2; blue, 27 }  ,draw opacity=1 ] [dash pattern={on 0.84pt off 2.51pt}]  (303.65,132.48) .. controls (310.99,154.15) and (319.79,116.95) .. (330.45,138.28) ;
\draw [color={rgb, 255:red, 208; green, 2; blue, 27 }  ,draw opacity=1 ]   (303.3,128.99) .. controls (301.33,115.53) and (312.12,124.99) .. (306.71,90.11) ;
\draw [shift={(306.45,88.48)}, rotate = 80.74] [color={rgb, 255:red, 208; green, 2; blue, 27 }  ,draw opacity=1 ][line width=0.75]    (10.93,-3.29) .. controls (6.95,-1.4) and (3.31,-0.3) .. (0,0) .. controls (3.31,0.3) and (6.95,1.4) .. (10.93,3.29)   ;
\draw [color={rgb, 255:red, 208; green, 2; blue, 27 }  ,draw opacity=1 ] [dash pattern={on 0.84pt off 2.51pt}]  (306.45,88.48) .. controls (324.25,74.33) and (304.92,104.75) .. (333.25,118.08) ;
\draw [color={rgb, 255:red, 208; green, 2; blue, 27 }  ,draw opacity=1 ]   (333.25,118.08) .. controls (359.12,80.07) and (388.56,163.44) .. (427.86,57.49) ;
\draw [shift={(428.45,55.88)}, rotate = 110.07] [color={rgb, 255:red, 208; green, 2; blue, 27 }  ,draw opacity=1 ][line width=0.75]    (10.93,-3.29) .. controls (6.95,-1.4) and (3.31,-0.3) .. (0,0) .. controls (3.31,0.3) and (6.95,1.4) .. (10.93,3.29)   ;
\draw [color={rgb, 255:red, 208; green, 2; blue, 27 }  ,draw opacity=1 ]   (283.95,213.48) .. controls (333.15,181.08) and (314.05,125.88) .. (311.25,121.08) .. controls (308.45,116.28) and (316.05,108.08) .. (322.45,103.08) .. controls (328.85,98.08) and (339.89,80.84) .. (346.45,84.48) .. controls (353.01,88.13) and (369.61,105.25) .. (374.89,106.55) .. controls (380.18,107.85) and (396.17,99.28) .. (395.15,96.59) .. controls (394.16,94) and (333.64,61.2) .. (318.5,54.75) ;
\draw [shift={(316.85,54.08)}, rotate = 20.42] [color={rgb, 255:red, 208; green, 2; blue, 27 }  ,draw opacity=1 ][line width=0.75]    (10.93,-3.29) .. controls (6.95,-1.4) and (3.31,-0.3) .. (0,0) .. controls (3.31,0.3) and (6.95,1.4) .. (10.93,3.29)   ;

\draw (117.79,192.8) node [anchor=north west][inner sep=0.75pt]  [color={rgb, 255:red, 208; green, 2; blue, 27 }  ,opacity=1 ] [align=left] {$\displaystyle \tilde{D}_{a_{n}}{} \ $};
\draw (51.66,226.4) node [anchor=north west][inner sep=0.75pt]  [color={rgb, 255:red, 208; green, 2; blue, 27 }  ,opacity=1 ] [align=left] {$\displaystyle D_{a_{n}}{} \ $};
\draw (160.08,169.08) node [anchor=north west][inner sep=0.75pt]   [align=left] {$\displaystyle (\hat{\Sigma } ,D)$};
\draw (207.54,68.36) node [anchor=north west][inner sep=0.75pt]   [align=left] {$\displaystyle \hat{\mathcal{U}}_{a_{n}}{} \ $};
\draw (393.26,193.34) node [anchor=north west][inner sep=0.75pt]  [color={rgb, 255:red, 208; green, 2; blue, 27 }  ,opacity=1 ] [align=left] {$\displaystyle V( D)_{a_{n}}{} \ $};
\draw (259.79,208.81) node [anchor=north west][inner sep=0.75pt]  [color={rgb, 255:red, 208; green, 2; blue, 27 }  ,opacity=1 ] [align=left] {$\displaystyle \widetilde{V( D)}_{a_{n}}{} \ $};
\draw (423.42,156.41) node [anchor=north west][inner sep=0.75pt]   [align=left] {$\displaystyle (\hat{\Sigma } ,V( D))$};
\draw (468.46,57.24) node [anchor=north west][inner sep=0.75pt]   [align=left] {$\displaystyle \hat{\mathcal{U}}_{a_{n}}{} \ $};

\end{tikzpicture}

      \caption{  Construction of the primitive curves of a virtualization of a doodle diagram}
        \label{primivirt}
    \end{figure}
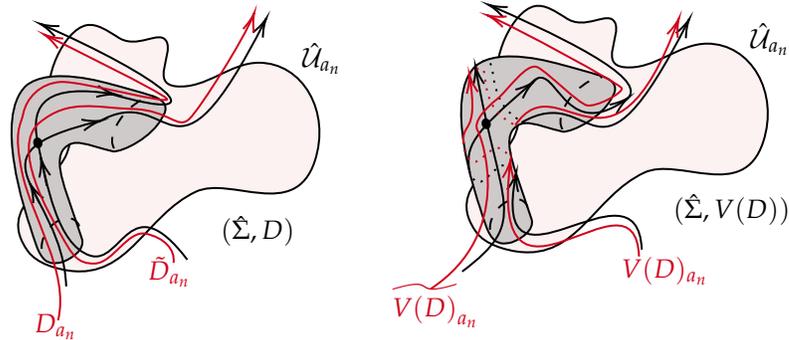
    
\end{proof}
Up to permutation, we may assume, without loss of generality that the virtualization is given in the crossing point $a_n$. In order to make the notation easier, we introduce the following definition. 
\begin{definition}
    Let us consider two transverse, oriented and generically immersed  curves $\gamma_1$ and $\gamma_2$ on a surface $\Sigma$ and let $\mathcal{U}$ be a subset of $\Sigma$. We define the \textbf{relative intersection pairing number}, denoted by $\left\langle \gamma_1,\gamma_2 \right\rangle\mid_{\mathcal{U}}$ as the number
   \begin{equation}
    \left\langle \gamma_1,\gamma_2 \right\rangle \mid_{\mathcal{U}}=\displaystyle \sum\limits_{c_k^{e_k}\in \gamma_1 \nearrow_{\mathcal{U}} \gamma_2 }e_k,
    \label{releq}
\end{equation}
    where $\gamma_1 \nearrow_{\mathcal{U}} \gamma_2=\{c_i^{e_i}\in \gamma_1 \nearrow \gamma_2 \mid c_i \in \gamma_1 \cap \gamma_2 \cap \mathcal{U}\}$.
\end{definition}
The relative intersection pairing number is well defined in $H_1(\mathcal{U},\mathbf{Z})$. We now consider the equivalence classes, denoted by $V(\varphi_D), V(\varphi)_{a_1},\ldots,V(\varphi)_{a_n}$,  in the homology group $H_1(\widehat{\Sigma},\mathbf{Z})$ represented by the curves $V(D),V(D)_{a_1},\ldots,V(D)_{a_n}$, respectively. Thereby, from Proposition \ref{comple} and Equation \ref{releq}, 
\begin{equation}
     \left\langle \varphi_{a_i},\varphi_{D} \right\rangle \mid_{\widehat{\Sigma}\setminus \widehat{\mathcal{U}}_{a_n}} =\left\langle V(\varphi)_{a_i},V(\varphi_{D}) \right\rangle \mid_{\widehat{\Sigma}\setminus \widehat{\mathcal{U}}_{a_n}}
    \text{ and }
    \left\langle \varphi_{a_i},\varphi_{a_j} \right\rangle \mid_{\widehat{\Sigma}\setminus \widehat{\mathcal{U}}_{a_n}} =\left\langle V(\varphi)_{a_i},V(\varphi)_{a_j} \right\rangle \mid_{\widehat{\Sigma}\setminus \widehat{\mathcal{U}}_{a_n}},
    \label{res02}
\end{equation}
for every $i,j\neq n$. We also have,
\begin{equation}
     \left\langle \widetilde{\varphi}_{a_n},\varphi_{D} \right\rangle \mid_{\widehat{\Sigma}\setminus \widehat{\mathcal{U}}_{a_n}} =\left\langle V(\varphi)_{a_n},V(\varphi_{D}) \right\rangle \mid_{\widehat{\Sigma}\setminus \widehat{\mathcal{U}}_{a_n}}.
    \label{res03}
\end{equation}
\begin{theorem}\label{theo01}
    With the above notation. Let $a_i \neq a_n$.
    \begin{enumerate}
        \item  If $\{a_n,a_{n}^{-1}\}\cap D_{a_i}\nearrow D \in \{\emptyset,\{a_n,a_n^{-1}\}\}$, then $\left\langle V( \varphi)_{a_i},V(\varphi_{D}) \right\rangle=\left\langle \varphi_{a_i},\varphi_{D} \right\rangle$.
        \item If $\{a_n,a_{n}^{-1}\}\cap D_{a_i}\nearrow D =\{a_{n}^{\epsilon}\}$, then $\left\langle V( \varphi)_{a_i},V(\varphi_{D}) \right\rangle=\left\langle \varphi_{a_i},\varphi_{D} \right\rangle-2\epsilon$.
    \end{enumerate}
    On the other hand, $\left\langle V( \varphi)_{a_n},V(\varphi_{D}) \right\rangle=-\left\langle \varphi_{a_n},\varphi_{D} \right\rangle$.
\end{theorem}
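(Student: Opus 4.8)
The plan is to compare the two pairings by localizing them at the handle. Using the relative intersection pairing number of Equation~(\ref{releq}) and the fact that every transverse double point lies either in $\widehat{\mathcal{U}}_{a_n}$ or in its complement, I would write each homology intersection number as the sum of its restriction to $\widehat{\mathcal{U}}_{a_n}$ and its restriction to $\widehat{\Sigma}\setminus\widehat{\mathcal{U}}_{a_n}$. Equations~(\ref{res02}) and~(\ref{res03}) say precisely that the complementary (``outside'') restrictions of the original and the virtualized pairings coincide. Hence the entire statement reduces to understanding the local (``inside'') restriction, i.e.\ to reading off how the $1$-handle attached in $\widehat{\mathcal{U}}_{a_n}$ changes the single crossing $a_n$; this is what Figure~\ref{primivirt} records.

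For assertions~(1) and~(2) fix $a_i\neq a_n$. By Proposition~\ref{comple}(a)--(b) the curves $D_{a_i},D$ coincide with $V(D)_{a_i},V(D)$ outside $\widehat{\mathcal{U}}_{a_n}$, so in the difference $\langle V(\varphi)_{a_i},V(\varphi_D)\rangle-\langle\varphi_{a_i},\varphi_D\rangle$ the outside contributions cancel. The inside restriction of $\langle\varphi_{a_i},\varphi_D\rangle$ equals the sum of the exponents of the letters $a_n$ appearing in $D_{a_i}\nearrow D$, since inside $\widehat{\mathcal{U}}_{a_n}$ the only double point is $a_n$. I would then use Figure~\ref{primivirt} to check that attaching the handle reverses the co-orientation with which $D_{a_i}$ meets the second branch at $a_n$; thus the inside restriction of the virtualized pairing is the negative of that of the original, and the difference equals $-2$ times the inside restriction of $\langle\varphi_{a_i},\varphi_D\rangle$. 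If $\{a_n,a_n^{-1}\}\cap(D_{a_i}\nearrow D)$ is empty, or equals $\{a_n,a_n^{-1}\}$ (in which case the two exponents are opposite and cancel), this restriction is $0$ and we obtain~(1); if it equals $\{a_n^{\epsilon}\}$ the restriction is $\epsilon$ and we obtain the correction $-2\epsilon$ of~(2).

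For the last equality I would argue slightly differently, because now the primitive curve itself changes. Both $D_{a_n}$ and $V(D)_{a_n}$ \emph{turn} at $a_n$, so neither meets its diagram transversally there and both inside restrictions vanish (Figure~\ref{primivirt}). By Proposition~\ref{comple}(c)--(d), $V(D)_{a_n}$ agrees with $\widetilde{D}_{a_n}$ outside $\widehat{\mathcal{U}}_{a_n}$ and is stably $R$-equivalent to it, and Equation~(\ref{res03}) matches the outside restrictions of $\langle V(\varphi)_{a_n},V(\varphi_D)\rangle$ and $\langle\widetilde\varphi_{a_n},\varphi_D\rangle$. Combining the vanishing inside parts with this outside identification gives $\langle V(\varphi)_{a_n},V(\varphi_D)\rangle=\langle\widetilde\varphi_{a_n},\varphi_D\rangle$, and the homology relation $\langle\widetilde\varphi_{a_n},\varphi_D\rangle=-\langle\varphi_{a_n},\varphi_D\rangle$ from Equation~(\ref{eq:rela}) yields the claim.

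The step I expect to be the main obstacle is the local sign bookkeeping inside $\widehat{\mathcal{U}}_{a_n}$: one has to justify rigorously, from the virtualization picture, that passing the branch over the $1$-handle reverses the sign of the crossing $a_n$ in the relative pairing of $H_1(\widehat{\mathcal{U}}_{a_n},\mathbf{Z})$, and that a primitive curve which turns at $a_n$ contributes nothing there whereas one which passes through $a_n$ contributes a single, sign-reversed term. Once this local analysis is pinned down, the theorem follows formally from the splitting together with Equations~(\ref{res02})--(\ref{res03}) and the homology identities in Equation~(\ref{eq:rela}).
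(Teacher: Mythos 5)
Your proposal is correct and follows essentially the same route as the paper: both split each pairing into its restriction to $\widehat{\mathcal{U}}_{a_n}$ and to $\widehat{\Sigma}\setminus\widehat{\mathcal{U}}_{a_n}$, match the outside parts via equations~(\ref{res02})--(\ref{res03}), read off the local sign reversal of $a_n$ (from the definition of virtualization and Figure~\ref{primivirt}) to get the $0$ or $-2\epsilon$ correction, and prove the final identity by showing both inside restrictions vanish so that $\left\langle V(\varphi)_{a_n},V(\varphi_{D})\right\rangle=\left\langle \widetilde{\varphi}_{a_n},\varphi_{D}\right\rangle$, then invoking equation~(\ref{eq:rela}). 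The local bookkeeping you flag as the main obstacle is exactly what the paper settles by the definition of virtualization, namely $\{a_n,a_n^{-1}\}\cap V(D)_{a_i}\nearrow V(D)=\{a_n^{-\epsilon}\}$ when $\{a_n,a_n^{-1}\}\cap D_{a_i}\nearrow D=\{a_n^{\epsilon}\}$, together with the picture of the turning curves $V(D)_{a_n}$ and $\widetilde{D}_{a_n}$.
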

\begin{proof}
 Let $a_i\neq a_n$.
 \begin{enumerate}
     \item We suppose that $\{a_n,a_{n}^{-1}\}\cap D_{a_i}\nearrow D \in \{\emptyset,\{a_n,a_n^{-1}\}\}$. Then, from the definition of virtualization, we have $\{a_n,a_{n}^{-1}\}\cap D_{a_i}\nearrow D =\{a_n,a_{n}^{-1}\}\cap V(D)_{a_i}\nearrow V(D)$. Therefore, relative to the neighborhood $\mathcal{U}_{a_n}$, the equality $\left\langle V( \varphi)_{a_i},V(\varphi_{D}) \right\rangle_{\widetilde{\mathcal{U}}_{a_n}}=\left\langle \varphi_{a_i},\varphi_{D} \right\rangle_{\widetilde{\mathcal{U}}_{a_n}}$ is satisfied. Thus, from the first equation in  (\ref{res02}), we have that  $\left\langle V( \varphi)_{a_i},V(\varphi_{D}) \right\rangle=\left\langle \varphi_{a_i},\varphi_{D} \right\rangle$.
     \item Suppose that $\{a_n,a_{n}^{-1}\}\cap D_{a_i}\nearrow D =\{a_{n}^{\epsilon}\}$. Then, from the definition of virtualization, we have $\{a_n,a_{n}^{-1}\}\cap V(D)_{a_i}\nearrow V(D) =\{a_{n}^{-\epsilon}\}$. Thus, $\left\langle V( \varphi)_{a_i},V(\varphi_{D}) \right\rangle_{\widetilde{\mathcal{U}}_{a_n}}=-\epsilon$ and $\left\langle \varphi_{a_i},\varphi_{D} \right\rangle_{\widetilde{\mathcal{U}}_{a_n}}=\epsilon$. Therefore, from the first equation in  (\ref{res02}), we have that  $\left\langle V( \varphi)_{a_i},V(\varphi_{D}) \right\rangle=\left\langle \varphi_{a_i},\varphi_{D} \right\rangle-2\epsilon$.
 \end{enumerate}
Besides, from Figure \ref{primivirt}, we have that $\left\langle V( \varphi)_{a_n},V(\varphi_{D}) \right\rangle_{\widetilde{\mathcal{U}}_{a_n}}=0$ and $\left\langle \varphi_{a_n},\varphi_{D} \right\rangle_{\widetilde{\mathcal{U}}_{a_n}}=0$. Thus, from Equation \ref{res03}, we have 
\[\begin{array}{ccl}
  \left\langle V(\varphi)_{a_n},V(\varphi_{D}) \right\rangle    & =&\left\langle V(\varphi)_{a_n},V(\varphi_{D}) \right\rangle \mid_{\widehat{\Sigma}\setminus \widehat{\mathcal{U}}_{a_n}} + \left\langle V(\varphi)_{a_n},V(\varphi_{D}) \right\rangle \mid_{ \widehat{\mathcal{U}}_{a_n}}\\
     &&\\
     &=&\left\langle \widetilde{\varphi}_{a_n},\varphi_{D} \right\rangle \mid_{\widehat{\Sigma}\setminus \widehat{\mathcal{U}}_{a_n}} + \left\langle \widetilde{\varphi}_{a_n},\varphi_{D} \right\rangle \mid_{ \widehat{\mathcal{U}}_{a_n}}\\
     &&\\
     &=&\left\langle \widetilde{\varphi}_{a_n},\varphi_{D} \right\rangle. 
\end{array}
\]
Thus, from Equation (\ref{eq:rela}), $\left\langle V(\varphi)_{a_n},V(\varphi_{D}) \right\rangle=-\left\langle \varphi_{a_n},\varphi_{D} \right\rangle.$
\end{proof}

We consider the following theorem that involves the behaviour of the entries of the matrix $\beta(D)$ of a one-component doodle diagram $(\Sigma,D)$.
\begin{theorem}\label{theo02}
   Let $(\Sigma,D)$ be a one-component doodle diagram and let $a_i\neq a_n$. 
   \begin{enumerate}
        \item  If $\{a_n,a_{n}^{-1}\}\cap D_{a_i}\nearrow D =\emptyset$, then $\left\langle V( \varphi)_{a_i},V(\varphi)_{a_j} \right\rangle=\left\langle \varphi_{a_i},\varphi_{a_j} \right\rangle$, for every $j=1,\ldots,n$.
        \item  Suppose that $\{a_n,a_{n}^{-1}\}\cap D_{a_i}\nearrow D =\{a_n,a_{n}^{-1}\}$, we have the following cases
        \begin{itemize}
            \item  $\{a_n,a_{n}^{-1}\}\cap D_{a_j}\nearrow D \in \{\emptyset,\{a_n,a_{n}^{-1}\}\}$, then $\left\langle V( \varphi)_{a_i},V(\varphi)_{a_j} \right\rangle=\left\langle \varphi_{a_i},\varphi_{a_j} \right\rangle$.
            \item $\{a_n,a_{n}^{-1}\}\cap D_{a_j}\nearrow D =\{a_{n}^{\epsilon}\}$, then $\left\langle V( \varphi)_{a_i},V(\varphi)_{a_j} \right\rangle=\left\langle \varphi_{a_i},\varphi_{a_j} \right\rangle+2\epsilon$.
            \item $\left\langle V( \varphi)_{a_i},V(\varphi)_{a_n} \right\rangle=\left\langle \varphi_{a_i},\varphi_{D} \right\rangle+\left\langle \varphi_{a_i},\varphi_{a_n} \right\rangle$.
        \end{itemize}
      \item    Suppose that $\{a_n,a_{n}^{-1}\}\cap D_{a_i}\nearrow D =\{a_n^{\epsilon}\}$. 
      \begin{itemize}
            \item $\{a_n,a_{n}^{-1}\}\cap D_{a_j}\nearrow D =\{a_{n},a_n^{-1}\}$, then $\left\langle V( \varphi)_{a_i},V(\varphi)_{a_j} \right\rangle=\left\langle \varphi_{a_i},\varphi_{a_j} \right\rangle-2\epsilon$.
            \item $\{a_n,a_{n}^{-1}\}\cap D_{a_i}\nearrow D =\{a_n^{\epsilon}\}$, then $\left\langle V( \varphi)_{a_i},V(\varphi)_{a_j} \right\rangle=\left\langle \varphi_{a_i},\varphi_{a_j} \right\rangle$.
            \item $\{a_n,a_{n}^{-1}\}\cap D_{a_i}\nearrow D =\{a_n^{-\epsilon}\}$, then $\left\langle V( \varphi)_{a_i},V(\varphi)_{a_j} \right\rangle=\left\langle \varphi_{a_i},\varphi_{a_j} \right\rangle-2\epsilon$.
            \item $\left\langle V( \varphi)_{a_i},V(\varphi)_{a_n} \right\rangle=\left\langle \varphi_{a_i},\varphi_{D} \right\rangle+\left\langle \varphi_{a_i},\varphi_{a_n} \right\rangle-\epsilon$.
        \end{itemize}
   \end{enumerate}
\end{theorem}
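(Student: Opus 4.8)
The plan is to reduce every one of these identities to a purely local computation inside the handle neighborhood $\widehat{\mathcal{U}}_{a_n}$, in the same spirit as the proof of Theorem~\ref{theo01}. Since the homology intersection number is a stable invariant and $(\Sigma,D)\sim(\widehat{\Sigma},D)$ by Proposition~\ref{comple}, I may compute $\langle\varphi_{a_i},\varphi_{a_j}\rangle$ inside $\widehat{\Sigma}$ and split it additively as
\[
\langle\varphi_{a_i},\varphi_{a_j}\rangle=\langle\varphi_{a_i},\varphi_{a_j}\rangle\mid_{\widehat{\Sigma}\setminus\widehat{\mathcal{U}}_{a_n}}+\langle\varphi_{a_i},\varphi_{a_j}\rangle\mid_{\widehat{\mathcal{U}}_{a_n}},
\]
and likewise for $\langle V(\varphi)_{a_i},V(\varphi)_{a_j}\rangle$. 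By the second identity in~(\ref{res02}) the two contributions over $\widehat{\Sigma}\setminus\widehat{\mathcal{U}}_{a_n}$ coincide, so the whole difference $\langle V(\varphi)_{a_i},V(\varphi)_{a_j}\rangle-\langle\varphi_{a_i},\varphi_{a_j}\rangle$ is concentrated over $\widehat{\mathcal{U}}_{a_n}$. It therefore suffices to read off, for each prescribed way in which $D_{a_i}$ and $D_{a_j}$ run through the crossing $a_n$, how the crossings they produce inside $\widehat{\mathcal{U}}_{a_n}$ are altered by the virtualization.

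For the entries with $j\neq n$ I would invoke the key local fact already isolated in the proof of Theorem~\ref{theo01}: virtualization at $a_n$ reverses the sign of every occurrence of $a_n$, so a contribution $a_n^{\epsilon}$ before virtualization becomes $a_n^{-\epsilon}$ afterwards. The three main cases record the possibilities $\{a_n,a_n^{-1}\}\cap D_{a_i}\nearrow D\in\{\emptyset,\{a_n,a_n^{-1}\},\{a_n^{\epsilon}\}\}$, that is, $D_{a_i}$ misses $a_n$, runs along both strands, or runs along exactly one strand, while the sub-cases do the same for $D_{a_j}$. When $D_{a_i}$ misses $a_n$ one has $D_{a_i}=V(D)_{a_i}$ by Proposition~\ref{comple}(b), and there is no local contribution with any $D_{a_j}$, which yields item~$(a)$ at once (for $j=n$ as well, since $D_{a_i}$ then meets neither $D_{a_n}$ nor $\widetilde{D}_{a_n}$ near $a_n$). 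In the remaining situations I would tabulate the signed local crossings of $D_{a_i}$ with $D_{a_j}$ at $a_n$, apply the sign reversal to those labelled $a_n$, and subtract the original count; the arithmetic of these flips produces exactly the stated corrections $0$ or $\pm 2\epsilon$, according to whether $D_{a_j}$ meets both strands, one strand with matching label, or one strand with opposite label.

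The genuinely delicate entries are those of the $a_n$-column, $\langle V(\varphi)_{a_i},V(\varphi)_{a_n}\rangle$, because $V(\varphi)_{a_n}$ is not one of the original classes. Here I would first trade $V(\varphi)_{a_n}$ for $\widetilde{\varphi}_{a_n}$: by Proposition~\ref{comple}(c) the curves $V(D)_{a_n}$ and $\widetilde{D}_{a_n}$ agree on $\widehat{\Sigma}\setminus\widehat{\mathcal{U}}_{a_n}$, so splitting over $\widehat{\mathcal{U}}_{a_n}$ and using the analogue of~(\ref{res03}) with $V(\varphi)_{a_i}$ in place of $V(\varphi_D)$ reduces the outside contribution to one involving $\langle\varphi_{a_i},\widetilde{\varphi}_{a_n}\rangle$, up to the same local flips analysed above. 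I would then convert this into the stated combination $\langle\varphi_{a_i},\varphi_D\rangle+\langle\varphi_{a_i},\varphi_{a_n}\rangle$ (respectively its $-\epsilon$ shift) by feeding the relations of~(\ref{eq:rela}) — in particular $\langle\varphi_{a_n},\varphi_D\rangle=\langle\varphi_{a_n},\widetilde{\varphi}_{a_n}\rangle$ and bilinearity — together with the decomposition $\varphi_D=\varphi_{a_n}+\widetilde{\varphi}_{a_n}$. The main obstacle I anticipate is precisely this final bookkeeping: keeping the orientation conventions at the two strands of $a_n$ consistent so that the single leftover local crossing contributes exactly the $-\epsilon$ term and the signs coming from~(\ref{eq:rela}) assemble into the asserted totals rather than their negatives.
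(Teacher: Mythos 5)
Your proposal is correct and matches the paper's proof essentially step for step: the same splitting of each pairing into relative contributions over $\widehat{\Sigma}\setminus\widehat{\mathcal{U}}_{a_n}$ and over $\widehat{\mathcal{U}}_{a_n}$ with the outside parts equated via (\ref{res02}) and Proposition~\ref{comple}, the same local sign-flip analysis of $a_n^{\epsilon}\mapsto a_n^{-\epsilon}$ producing the corrections $0$ or $\pm 2\epsilon$ for $j\neq n$, and the same trade of $V(D)_{a_n}$ for $\widetilde{D}_{a_n}$ outside the handle followed by equation~(\ref{eq:rela}) for the $a_n$-column, including the vanishing local contribution $\left\langle V(\varphi)_{a_i},V(\varphi)_{a_n}\right\rangle\mid_{\widehat{\mathcal{U}}_{a_n}}=0$ and the leftover $-\epsilon$ in case (c). The only cosmetic difference is that the paper settles the first sub-case of (c) by skew-symmetry from the corresponding sub-case of (b) rather than by a fresh local count, which your direct tabulation would reproduce anyway.
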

\begin{proof}
 Let $a_i\neq a_n$.
 \begin{enumerate}
     \item Suppose that $\{a_n,a_{n}^{-1}\}\cap D_{a_i}\nearrow D =\emptyset$. Then, $D_{a_i}\cap \widehat{\mathcal{U}}_{a_n}=\emptyset$. Therefore, $V(D)_{a_i}=D_{a_i}$ and $\left\langle \varphi_{a_i},\varphi_{a_j}\right\rangle\mid_{\widehat{\mathcal{U}}_{a_n}}=0$. We also have $\left\langle V(\varphi)_{a_i},V(\varphi)_{a_j}\right\rangle\mid_{\widehat{\mathcal{U}}_{a_n}}=0$. Thus,  $\left\langle \varphi_{a_i},\varphi_{a_j}\right\rangle\mid=\left\langle V(\varphi)_{a_i},V(\varphi)_{a_j}\right\rangle$. 
     \item Suppose that $\{a_n,a_{n}^{-1}\}\cap D_{a_i}\nearrow D =\{a_n,a_{n}^{-1}\}$. Then, $\{a_n,a_{n}^{-1}\}\cap V(D)_{a_i}\nearrow V(D) =\{a_n,a_{n}^{-1}\}$. Moreover,
      \begin{itemize}
            \item If $\{a_n,a_{n}^{-1}\}\cap D_{a_j}\nearrow D=\emptyset$, we use literal $(a)$. But, if $\{a_n,a_{n}^{-1}\}\cap D_{a_j}\nearrow D=\{a_n,a_{n}^{-1}\}$, then $\left\langle V( \varphi)_{a_i},V(\varphi)_{a_j} \right\rangle\mid_{\widetilde{\mathcal{U}}_{a_n}}=\left\langle \varphi_{a_i},\varphi_{a_j} \right\rangle\mid_{\widetilde{\mathcal{U}}_{a_n}}=0$. As a consequence, 
            \[
            \begin{array}{ccl}
                \left\langle V( \varphi)_{a_i},V(\varphi)_{a_j} \right\rangle & =&\left\langle V( \varphi)_{a_i},V(\varphi)_{a_j} \right\rangle\mid_{\widehat{\Sigma}\setminus \widetilde{\mathcal{U}}_{a_n}}+\left\langle V( \varphi)_{a_i},V(\varphi)_{a_j} \right\rangle\mid_{\widetilde{\mathcal{U}}_{a_n}} \\
                 & &\\
                 &=&\left\langle V( \varphi)_{a_i},V(\varphi)_{a_j} \right\rangle\mid_{\widehat{\Sigma}\setminus \widetilde{\mathcal{U}}_{a_n}}\\
                 &&\\
                 &=&\left\langle \varphi_{a_i},\varphi_{a_j} \right\rangle\mid_{\widehat{\Sigma}\setminus \widetilde{\mathcal{U}}_{a_n}}=\left\langle \varphi_{a_i},\varphi_{a_j} \right\rangle.
            \end{array}
            \]
            \item If $\{a_n,a_{n}^{-1}\}\cap D_{a_j}\nearrow D =\{a_{n}^{\epsilon}\}$, then $\left\langle \varphi_{a_i},\varphi_{a_j} \right\rangle\mid_{\widetilde{\mathcal{U}}_{a_n}}=-\epsilon$. Thereby, $\left\langle V(\varphi)_{a_i},V(\varphi)_{a_j} \right\rangle\mid_{\widetilde{\mathcal{U}}_{a_n}}=\epsilon$. Thus,
            \[
            \begin{array}{ccl}
                \left\langle V( \varphi)_{a_i},V(\varphi)_{a_j} \right\rangle & =&\left\langle V( \varphi)_{a_i},V(\varphi)_{a_j} \right\rangle\mid_{\widehat{\Sigma}\setminus \widetilde{\mathcal{U}}_{a_n}}+\left\langle V( \varphi)_{a_i},V(\varphi)_{a_j} \right\rangle\mid_{\widetilde{\mathcal{U}}_{a_n}} \\
                 & &\\
                 &=&\left\langle V( \varphi)_{a_i},V(\varphi)_{a_j} \right\rangle\mid_{\widehat{\Sigma}\setminus \widetilde{\mathcal{U}}_{a_n}}+\epsilon\\
                 &&\\
                 &=&\left\langle \varphi_{a_i},\varphi_{a_j} \right\rangle\mid_{\widehat{\Sigma}\setminus \widetilde{\mathcal{U}}_{a_n}}+\epsilon\\
                 &&\\
                 &=&\left\langle \varphi_{a_i},\varphi_{a_j} \right\rangle\mid_{\widehat{\Sigma}\setminus \widetilde{\mathcal{U}}_{a_n}}-\epsilon +2\epsilon\\
                 &&\\
                 &=&\left\langle \varphi_{a_i},\varphi_{a_j} \right\rangle+2\epsilon.
            \end{array}
            \]
            \item  From Figure \ref{primivirt}, $\left\langle \varphi_{a_i},\widetilde{\varphi}_{a_n} \right\rangle\mid_{\widetilde{\mathcal{U}}_{a_n}}=0$ and $\left\langle V(\varphi)_{a_i},V(\varphi)_{a_n} \right\rangle\mid_{\widetilde{\mathcal{U}}_{a_n}}=0$. Then, 
            \[
            \begin{array}{ccl}
                \left\langle V( \varphi)_{a_i},V(\varphi)_{a_n} \right\rangle & =&\left\langle V( \varphi)_{a_i},V(\varphi)_{a_n} \right\rangle\mid_{\widehat{\Sigma}\setminus \widetilde{\mathcal{U}}_{a_n}}+\left\langle V( \varphi)_{a_i},V(\varphi)_{a_n} \right\rangle\mid_{\widetilde{\mathcal{U}}_{a_n}} \\
                 & &\\
                 &=&\left\langle V( \varphi)_{a_i},V(\varphi)_{a_n} \right\rangle\mid_{\widehat{\Sigma}\setminus \widetilde{\mathcal{U}}_{a_n}}=\left\langle \varphi_{a_i},\widetilde{\varphi}_{a_n} \right\rangle\mid_{\widehat{\Sigma}\setminus \widetilde{\mathcal{U}}_{a_n}}\\
                 &&\\
                 &=&\left\langle \varphi_{a_i},\widetilde{\varphi}_{a_n} \right\rangle=\left\langle \varphi_{a_i},\varphi_{D} \right\rangle+\left\langle \varphi_{a_i},\varphi_{a_n} \right\rangle.
            \end{array}
            \]
        \end{itemize}
        \item Suppose that $\{a_n,a_{n}^{-1}\}\cap D_{a_i}\nearrow D =\{a_n^{\epsilon}\}$. Then, $\{a_n,a_{n}^{-1}\}\cap V(D)_{a_i}\nearrow V(D) =\{a_n^{-\epsilon}\}$. Moreover, 
      \begin{itemize}
            \item If $\{a_n,a_{n}^{-1}\}\cap D_{a_j}\nearrow D =\{a_{n},a_n^{-1}\}$. From literal $(a)$,  $\left\langle V( \varphi)_{a_j},V(\varphi)_{a_i} \right\rangle=\left\langle \varphi_{a_j},\varphi_{a_i} \right\rangle+2\epsilon$. Therefore, $\left\langle V( \varphi)_{a_i},V(\varphi)_{a_j} \right\rangle=\left\langle \varphi_{a_i},\varphi_{a_j} \right\rangle-2\epsilon$.
            \item If $\{a_n,a_{n}^{-1}\}\cap D_{a_i}\nearrow D =\{a_n^{\epsilon}\}$, then $\left\langle \varphi_{a_i},\varphi_{a_j}\right\rangle\mid_{\widehat{\mathcal{U}}_{a_n}}=0$. We also have the equality $\{a_n,a_{n}^{-1}\}\cap V(D)_{a_i}\nearrow V(D) =\{a_n^{-\epsilon}\}$. Thus, $\left\langle V(\varphi)_{a_i},V(\varphi)_{a_j}\right\rangle\mid_{\widehat{\mathcal{U}}_{a_n}}=0$. Therefore, $\left\langle V( \varphi)_{a_i},V(\varphi)_{a_j} \right\rangle=\left\langle \varphi_{a_i},\varphi_{a_j} \right\rangle$.
            \item If $\{a_n,a_{n}^{-1}\}\cap D_{a_i}\nearrow D =\{a_n^{-\epsilon}\}$, then $\left\langle  \varphi_{a_i},\varphi_{a_j} \right\rangle_{\widehat{\mathcal{U}}_{a_n}}=\epsilon$. Besides, we also have the equality $\{a_n,a_{n}^{-1}\}\cap V(D)_{a_i}\nearrow V(D) =\{a_n^{\epsilon}\}$. Thus, $\left\langle  V(\varphi)_{a_i},V(\varphi)_{a_j} \right\rangle_{\widehat{\mathcal{U}}_{a_n}}=-\epsilon$. Therefore, 
             \[
            \begin{array}{ccl}
                \left\langle V( \varphi)_{a_i},V(\varphi)_{a_j} \right\rangle & =&\left\langle V( \varphi)_{a_i},V(\varphi)_{a_j} \right\rangle\mid_{\widehat{\Sigma}\setminus \widetilde{\mathcal{U}}_{a_n}}+\left\langle V( \varphi)_{a_i},V(\varphi)_{a_j} \right\rangle\mid_{\widetilde{\mathcal{U}}_{a_n}} \\
                 & &\\
                 &=&\left\langle V( \varphi)_{a_i},V(\varphi)_{a_j} \right\rangle\mid_{\widehat{\Sigma}\setminus \widetilde{\mathcal{U}}_{a_n}}-\epsilon\\
                 &&\\
                 &=&\left\langle \varphi_{a_i},\varphi_{a_j} \right\rangle\mid_{\widehat{\Sigma}\setminus \widetilde{\mathcal{U}}_{a_n}}+\epsilon-2\epsilon\\
                 &&\\
                 &=&\left\langle \varphi_{a_i},\varphi_{a_j} \right\rangle\mid_{\widehat{\Sigma}\setminus \widetilde{\mathcal{U}}_{a_n}}+\left\langle  \varphi_{a_i},\varphi_{a_j} \right\rangle_{\widehat{\mathcal{U}}_{a_n}} -2\epsilon\\
                 &&\\
                 &=&\left\langle \varphi_{a_i},\varphi_{a_j} \right\rangle-2\epsilon.
            \end{array}
            \]
            \item Since $\{a_n,a_{n}^{-1}\}\cap D_{a_i}\nearrow D =\{a_n^{\epsilon}\}$, then, $\left\langle  \varphi_{a_i},\widetilde{\varphi}_{a_n} \right\rangle_{\widehat{\mathcal{U}}_{a_n}}=\epsilon$. Besides, we know that $\left\langle  V(\varphi)_{a_i},V(\varphi)_{a_n} \right\rangle_{\widehat{\mathcal{U}}_{a_n}}=0$. Thus, 

            \[
            \begin{array}{ccl}
                \left\langle V( \varphi)_{a_i},V(\varphi)_{a_n} \right\rangle & =&\left\langle V( \varphi)_{a_i},V(\varphi)_{a_n} \right\rangle\mid_{\widehat{\Sigma}\setminus \widetilde{\mathcal{U}}_{a_n}}+\left\langle V( \varphi)_{a_i},V(\varphi)_{a_n} \right\rangle\mid_{\widetilde{\mathcal{U}}_{a_n}} \\
                 & &\\
                 &=&\left\langle V( \varphi)_{a_i},V(\varphi)_{a_n} \right\rangle\mid_{\widehat{\Sigma}\setminus \widetilde{\mathcal{U}}_{a_n}}=\left\langle \varphi_{a_i},\widetilde{\varphi}_{a_n} \right\rangle\mid_{\widehat{\Sigma}\setminus \widetilde{\mathcal{U}}_{a_n}}\\
                 &&\\
                 &=&\left\langle \varphi_{a_i},\widetilde{\varphi}_{a_n} \right\rangle\mid_{\widehat{\Sigma}\setminus \widetilde{\mathcal{U}}_{a_n}}+\left\langle \varphi_{a_i},\widetilde{\varphi}_{a_n} \right\rangle\mid_{ \widetilde{\mathcal{U}}_{a_n}} -\epsilon\\
                 &&\\
                 &=&\left\langle \varphi_{a_i},\widetilde{\varphi}_{a_n} \right\rangle-\epsilon=\left\langle \varphi_{a_i},\varphi_{D} \right\rangle+\left\langle \varphi_{a_i},\varphi_{a_n} \right\rangle-\epsilon.
            \end{array}
            \]
        \end{itemize}
 \end{enumerate}
\end{proof}

A direct consequence of  Theorems \ref{theo01} and \ref{theo02} is the following corollary.
\begin{corollary}
      Let $(\Sigma,D)$ be an almost-classical doodle diagram. Then $\lambda((\Sigma,V(D)))$ is equivalent to

\begin{equation*}
 \left( \begin{array}{rrrrrrr|r}

0&\dots& 0&-2&\dots& -2&-1&-2\\
\vdots&&\vdots&\vdots&&\vdots&\vdots&\vdots\\
0&\dots& 0&-2&\dots& -2&-1&-2\\

2&\dots& 2&0&\dots& 0&1&2\\
\vdots&&\vdots&\vdots&&\vdots&\vdots&\vdots\\
2&\dots& 2&0&\dots& 0&1&2\\
1&\dots&1&-1&\dots&-1&0&0

\end{array}\right).
\end{equation*}
Where the block of $2$'s is of size $k \times k$ and the block of $-2$'s is of size $m \times m$, with $k$ (and $m$) representing the cardinality of the set of all $a_j$ for which $D_{a_j}$ crosses the point $a_n$ once in a positive (negative) way. Thus, if $k=m$, then $\lambda((\Sigma,V(D))))$ is equivalent to $\left(\ \ \mid \ \ \right)$.
    
\end{corollary}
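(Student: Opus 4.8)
The plan is to read off the full augmented matrix $\lambda((\Sigma,V(D)))=(\beta(V(D))\mid\alpha(V(D)))$ from Theorems~\ref{theo01} and~\ref{theo02}, and then to reduce it. The hypothesis that $(\Sigma,D)$ is almost classical is what makes this feasible: by Lemma~\ref{stable} the base matrix $(\beta(D)\mid\alpha(D))$ is null, so every occurrence of $\langle\varphi_{a_i},\varphi_{a_j}\rangle$ and $\langle\varphi_{a_i},\varphi_D\rangle$ in those theorems equals $0$. Up to a permutation (Proposition~\ref{perp}) I would group the crossings $a_1,\dots,a_{n-1}$ according to $\{a_n,a_n^{-1}\}\cap D_{a_j}\nearrow D$: a group $Z$ where this is $\emptyset$, a group $B$ where it is $\{a_n,a_n^{-1}\}$, a group $P$ of positive single crossings where it is $\{a_n^{+1}\}$ (so $k=|P|$), and a group $N$ of negative single crossings where it is $\{a_n^{-1}\}$ (so $m=|N|$); the point $a_n$ is placed last.

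With this ordering I would tabulate the entries. Theorem~\ref{theo02}(a) makes every row and column indexed by $Z$ vanish, and Theorem~\ref{theo01}(a) makes the matching augmented entry vanish. Theorem~\ref{theo02}(b)--(c), combined with the null base matrix, make the $P\times P$ and $N\times N$ blocks zero, the $P\times N$ block equal to $-2$ and the $N\times P$ block equal to $+2$, and give the $a_n$-column the value $-\epsilon$ in each single-crossing row; Theorem~\ref{theo01} gives the augmented column the values $-2$ on $P$, $+2$ on $N$, and $0$ on $Z$, $B$ and $a_n$.

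The decisive step is the elimination of $Z$ and $B$. The $Z$-rows and $Z$-columns are zero with zero augmented entry, so each is removed by a reduction of type~$1$ with $B^{(j)}=0$. For $j\in B$ I would compute the whole column from the appropriate sub-cases of Theorem~\ref{theo02} and observe that it equals the augmented column $A$ exactly (namely $-2$ on $P$-rows, $+2$ on $N$-rows, and $0$ on the $Z$-, $B$- and $a_n$-rows); since its augmented entry is $0$, this is precisely the hypothesis $B^{(j)}=A$ of a reduction of type~$1$, and the $B$-index can be deleted. One must check that these deletions remain valid when iterated: because the $B\times B$ block is zero and no entry among the surviving indices is altered, each remaining $B$-column still coincides with the restricted augmented column. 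After removing all of $Z$ and $B$ the surviving $(k+m+1)\times(k+m+1)$ matrix is exactly the one displayed in the statement, with the $k$ rows/columns of $P$, the $m$ rows/columns of $N$, and the row/column of $a_n$.

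Finally, when $k=m$ I would clear this block by reductions of type~$2$. For $r\in P$ and $t\in N$ one has $A_r=-2=-A_t$, and a direct computation gives $B^{(r)}+B^{(t)}=A$, so the pair $\{r,t\}$ may be deleted; since deletion returns a block of the same shape with $k-1=m-1$, iterating $k$ times removes $P$ and $N$ entirely. The leftover $1\times1$ zero block indexed by $a_n$ with zero augmented entry is then erased by one last reduction of type~$1$, leaving the empty matrix. The main obstacle is purely the sign bookkeeping required to establish the two identities $B^{(j)}=A$ for $j\in B$ and $B^{(r)}+B^{(t)}=A$ for a $P$--$N$ pair from Theorems~\ref{theo01} and~\ref{theo02}; once these hold, the reductions and their iteration are straightforward.
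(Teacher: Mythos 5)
Your proposal is correct and takes essentially the same approach the paper intends: the paper offers no written proof beyond declaring the corollary "a direct consequence of Theorems \ref{theo01} and \ref{theo02}", and your argument—tabulating all entries from those two theorems using that $(\beta(D)\mid\alpha(D))$ is null by Lemma \ref{stable}, deleting the $Z$- and $B$-indices by type-1 reductions (checking $B^{(j)}=0$ resp.\ $B^{(j)}=A$), and then clearing the remaining block by iterated type-2 reductions on $P$--$N$ pairs when $k=m$—is precisely the omitted verification. The one discrepancy is with the paper's own loose wording about block sizes (the block of $-2$'s is really $k\times m$ and the block of $2$'s is $m\times k$, square only when $k=m$), and there your bookkeeping, not the paper's phrasing, is the correct one.
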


\end{document}